\DeclareMathOperator*{\argmin}{arg\,min}
\newtheorem{lemma}{Lemma}
\newtheorem{corollary}{Corollary}
\newtheorem{theorem}{Theorem}
\title{A Generalized Alternating Anderson Acceleration Method}
\author{Yunhui He\thanks{Department of Mathematics, University of Houston, 3551 Cullen Blvd, Houston, Texas 77204-3008, USA. {\tt{yhe43@central.uh.edu},  \tt{santolo.leveque@matfyz.cuni.cz}}. Present address of Santolo Leveque:  Sokolovsk\'{a} 49/83, Department of Numerical Mathematics, Faculty of Mathematics and Physics, Charles University, 186 75, Praha 8, Czech Republic.} 
	\and Santolo Leveque\footnotemark[1]}
\begin{document}
\maketitle
%
\begin{abstract}
In this work, we propose a generalized alternating Anderson acceleration method, a periodic scheme composed of $t$ fixed-point iteration steps, interleaved with $s$ steps of Anderson acceleration with window size $m$, to solve linear and nonlinear problems. This allows flexibility to use different combinations of fixed-point iteration and Anderson iteration.  We present a convergence analysis of the proposed scheme for accelerating the Richardson iteration in the linear case,  with a focus on specific parameter choices of interest. Specifically, we prove convergence of the proposed method under contractive fixed-point iteration and provide a sufficient condition for convergence when the Richardson iteration matrix is diagonalizable and noncontractive. To demonstrate the broader applicability of our proposed method, we use it to accelerate Jacobi iteration, Picard iteration, gradient descent, and the alternating direction method of multipliers in solving partial differential equations and nonlinear, nonsmooth optimization problems.  The numerical results illustrate that the proposed scheme is more efficient than the existing windowed Anderson acceleration and alternating Anderson ($s=1$) in terms of iteration number and CPU time for careful choice of parameters $m, s, t$. 
\end{abstract}

{\bf Keywords.} Alternating Anderson acceleration, fixed point iteration, convergence analysis, ADMM

\vspace{3mm}

{\bf MSC codes.} 65F10, 65H10, 65K10   

 \section{Introduction}
 Anderson acceleration (AA) or Anderson mixing \cite{anderson1965iterative} was first proposed by Donald G. Anderson in 1965 to improve the performance of fixed-point iterations in solving systems of nonlinear equations. Later, AA is applied to solve both linear and nonlinear problems. The main idea of AA is to use previous iterates to generate a new approximation.  Overtime, AA has been widely used in many areas, such as fluid flow \cite{both2019anderson,pollock2019anderson}, optimization problems \cite{wang2021asymptotic,bian2022anderson,peng2018anderson,feng2024anderson}, reinforcement learning \cite{shi2019regularized,sun2021damped,wang2023regularization,geist2018anderson}, and so on \cite{an2017anderson,higham2016anderson,ji2023improved}. Under certain conditions, when AA using all previous iteration information in combination with Richardson iteration is applied to solving linear systems, its iterates can be recovered from the GMRES method; see \cite{walker2011anderson,potra2013characterization}. Recently,  there have been many studies on the convergence analysis of AA \cite{potra2013characterization,walker2011anderson,toth2015convergence,evans2020proof,sterck2021asymptotic,de2022linear,bian2021anderson,brezinski2018shanks} and its development \cite{suryanarayana2019alternating,pratapa2016anderson,chen2024non,chen2022composite}.   A recent review on AA can be found in \cite{anderson2019comments}.
 
 In practice, we consider AA with finite window size, $m$, referred to as AA($m$). In AA($m$), each update is a linear combination of the underlying fixed-point iteration acting on previous $m+1$ iterates. The combination coefficients change at each iteration and are obtained by solving a least-squares problem. To reduce computational cost, one may consider a different combination of AA and fixed-point iteration.  In \cite{pratapa2016anderson}, AA is employed at periodic intervals
 within Jacobi iteration.  Later, \cite{suryanarayana2019alternating} applies this strategy to Richardson iteration, where a preconditioner is considered. \cite{lupo2019convergence} further provides a convergence analysis of the alternating Anderson with Jacobi iteration when the underlying fixed-point iteration is contractive and makes a connection with GMRES for full alternating Anderson, i.e., using all previous iteration information. A convergence analysis of alternating Anderson-Picard, in which an AA($m$) step is applied after $m$ steps of Picard iterations, for nonlinear problems, is provided in \cite{feng2024convergence}. In addition, the authors in \cite{feng2024convergence} establish the equivalence between the alternating Anderson-Picard and a multisecant-GMRES method that employs GMRES to solve a multisecant linear system at each iteration, and demonstrate  the efficiency and robustness of the alternating Anderson-Picard. 
 In \cite[Algorithm 7]{feng2024anderson}, the author considers a Generalized Alternate
 Anderson-Picard (GAAP) method, which allows flexible configurations of the Picard step
 $t$ and the Anderson acceleration history $m$.  In this work, we extend the alternating strategy to a general mixing scheme, where we consider a periodic pattern comprising $t$ steps of fixed-point iteration interleaved with $s$ steps of AA($m$). We refer to this approach as aAA($m$)[$s$]--FP[$t$] or simply aAA--FP. Here, $m, s, t$ are integers.  Note that the GAAP method is aAA($m$)[1]--FP[$t$]. In \cite{banerjee2016periodic}, it was found that taking $m\geq 2t$ generally improves performance of aAA($m$)[1]--FP[$t$] for the problems considered there. Our aAA--FP method provides us with more flexibility for general models compared to the GAAP method. In this work, we investigate the selection of parameters $m$, $s$, and $t$ to improve the performance of aAA--FP.
 
 The contributions of this work are as follows. 
 \begin{itemize}
 	\item First, we propose a generalized alternating Anderson mixing scheme aAA($m$)[$s$]--FP[$t$], which allows for flexible choices of $m, s$, and $t$. It is an extension of a simple alternating Anderson.
 	\item Second, in the linear case, we establish a connection between aAA($\infty$)[1]--FP[$t$] and GMRES for the Richardson fixed-point iteration. Furthermore, we present the convergence analysis of aAA($m$)[$s$]--FP[$t$] when the Richardson fixed-point iteration is contractive. In addition, we provide a sufficient condition for the convergence of aAA($m$)[$s$]--FP[$t$] when the Richardson iteration matrix is diagonalizable and noncontractive.
 	\item Third, to highlight its wide-ranging utility, our method is employed to enhance the efficiency of Jacobi iteration, Gauss--Seidel iteration, Picard iteration, gradient descent, and the alternating direction method of multipliers for solving partial differential equations and nonlinear optimization problems. We provide numerical evidence of the efficiency of aAA--FP in terms of both iteration count and CPU time, compared to the standalone fixed-point iteration, GMRES,  and aAA($m$)[$1$]--FP[$t$]. The results show that, for suitable choices of the parameters, $m, s, t$, aAA($m$)[$s$]--FP[$t$] is competitive with, or outperforms, the other methods.
 \end{itemize}
 
 Regarding the numerical results, we conduct a wide range of test problems, including nonsymmetric linear systems, Navier--Stokes equations, and nonlinear, nonsmooth optimization problems. For the highly ill-conditioned, nonsymmetric linear systems, we consider aAA--FP, GMRES, windowed AA, and AA($\infty$) that are applied to accelerate the noncontractive Jacobi and contractive Gauss--Seidel iterations. Our numerical results show that aAA--FP with Gauss-Seidel outperforms the Jacobi-based aAA--FP version.   For the Navier--Stokes problem, we accelerate the Picard iteration by aAA--FP and AA. We see that aAA--FP significantly speeds up the performance of the Picard iteration. Moreover, aAA--FP can be better than AA($\infty$). For the nonlinear optimization problems studied here, we consider the Alternating Direction Method of Multipliers (ADMM) as a fixed-point iteration. Our study demonstrates that aAA--FP and AA accelerate ADMM by approximately an order of magnitude, resulting in a reduction by a factor of 10 in both CPU time and iteration count. We also note that for some careful choices of $s$ and $t$, aAA($m$)[$s$]--FP[$t$] outperforms AA($m$) and is competitive as AA($\infty$). Finally, we accelerate the gradient descend (GD) in the solution of the regularized logistic regression. Also for this problem, we observe that AA and aAA--FP can drastically speed-up the solution process by an order of magnitude, with aAA--FP being able to outperform AA for suitable choice of the parameters. In conclusion, our aAA($m$)[$s$]--FP[$t$] method has the potential to solve challenging linear and nonlinear problems for which GMRES is not applicable in the nonlinear case, and for which the underlying fixed-point iteration may converge very slowly or even diverge.
 
 The remainder of this work is organized as follows. In Section \ref{sec:aAAFP}, we propose our generalized alternating Anderson acceleration method. Then, the convergence analysis is provided in Section \ref{sec:convergence}. To illustrate the efficiency of our proposed method, we conduct various examples in Section \ref{sec:num}. Finally, we draw conclusions in Section \ref{sec:con}.
 
 \section{Generalized alternating Anderson acceleration}\label{sec:aAAFP}
 In this section, we propose a generalized alternating Anderson acceleration method that combines Anderson iteration and fixed-point iteration in a novel mixing scheme.
 Assume that we solve the following problem
 \begin{displaymath}
 	g(\mathbf{x})=0,
 \end{displaymath}
 where $\mathbf{x} \in \mathbb{R}^n$, $g: \mathbb{R}^n\rightarrow \mathbb{R}^n$, and $\mathbf{x}^*$ is the exact solution.  Suppose that we use the following fixed-point iteration to solve the above problem
 \begin{equation}\label{eq:FP}
 	\mathbf{x}_{k+1}=q(\mathbf{x}_k),
 \end{equation}
 where $\mathbf{x}^*$ is a fixed point, i.e., $\mathbf{x}^*=q(\mathbf{x}^*)$. Define the $k$th residual as
 \begin{equation}\label{eq:kthresidual}
 	r(\mathbf{x}_k)=q(\mathbf{x}_k)-\mathbf{x}_k.
 \end{equation}
 
 AA can be used to improve the performance of \eqref{eq:FP}. In practice, we often consider AA with finite window size $m$, that is,  AA($m$). We first present the framework of AA($m$) in Algorithm \ref{alg:AAm}.  We allow $m=\infty$, and denote the corresponding AA as AA($\infty$). In AA, each iterate is updated by \eqref{eq:xkp1-AA}, which is a linear combination of previous $m_k+1$ fixed-point iterates. In each step, the combination coefficients in \eqref{eq:xkp1-AA} are obtained by solving the least-squares problem \eqref{eq:min-AA}. Note that when $m=0$, AA(0) is reduced to the original fixed-point iteration \eqref{eq:FP}.  In \eqref{eq:min-AA}, the norm $\|\cdot\|$ is denoted as the 2-norm. One may consider other norms; see, for example, \cite{toth2015convergence,yang2022anderson}. 
 \begin{algorithm}[H] 
 	\caption{Anderson Acceleration with finite window size $m$ (AA$(m)$)}\label{alg:AAm}
 	\begin{algorithmic}[1] 
 		\State{Given $\mathbf{x}_0$  and $m\geq0$}
 		\For {$k=0,1,\cdots$ until convergence }
 		\begin{equation}\label{eq:xkp1-AA} 
 			\mathbf{x}_{k+1} = q(\mathbf{x}_k) + \sum_{i=1}^{m_k}\gamma_i^{(k)} \left(q(\mathbf{x}_k)-q(\mathbf{x}_{k-i}) \right),
 		\end{equation}
 		where $m_k=\min\{m,k\}$ and $\gamma_i^{(k)}$ is obtained by solving the following least-squares problem
 		\begin{equation}\label{eq:min-AA} 
 			\min_{\left(\gamma_1^{(k)},\cdots, \gamma_{m_k}^{(k)}\right)} \left\|r(\mathbf{x}_k)+\sum_{i=1}^{m_k} \gamma_i^{(k)} \left(r(\mathbf{x}_k)-r(\mathbf{x}_{k-i}) \right)\right\|^2,
 		\end{equation}
 		where $r(\mathbf{x})=q(\mathbf{x})-\mathbf{x}$.
 		\EndFor
 	\end{algorithmic}
 \end{algorithm}

 For the convenience of convergence analysis, we rewrite Algorithm \ref{alg:AAm} as Algorithm  \ref{alg:AA-alt}. 
 \begin{algorithm}
 	\caption{Alternative format of Anderson acceleration with window size $m$ (AA($m$))} \label{alg:AA-alt}
 	\begin{algorithmic}[1] 
 		\State Given $x_0$ and $m\geq 0$ 
 		\For {$k=0,1,2,\cdots$ until convergence}
 		\begin{equation}\label{AA-alt-update}
 			\mathbf{x}_{k+1}=q(\mathbf{x}_k)+\sum_{i=1}^{m_k} \tau^{(k)}_i\left(q(\mathbf{x}_{k-i+1})- q(\mathbf{x}_{k-i})\right),
 		\end{equation}
 		where $m_k=\min\{m,k\}$ and $\tau_i^{(k)}$ is obtained by solving the following least-squares problem
 		\begin{equation}\label{eq:AA-LSQ-alt}
 			\min_{\left(\tau_1^{(k)},\cdots, \tau_{k}^{(k)}\right)}\left\|r(\mathbf{x}_k) + \sum_{i=1}^{m_k} \tau^{(k)}_i\left(r(\mathbf{x}_{k-i+1})- r(\mathbf{x}_{k-i})\right)\right\|^2,
 		\end{equation}
 		where $r(\mathbf{x})=q(\mathbf{x})-\mathbf{x}$.
 		\EndFor
 	\end{algorithmic}
 \end{algorithm}
 It can be shown that $\tau_j^{(k)}=\sum_{i=j}^{m_k}\gamma_i^{(k)}$.

 In the literature, many efforts have been made to improve the performance of AA. For example, in \cite{chen2022composite}, the authors 
 propose a systematic way to dynamically alternate the window size $m_1$ by the multiplicative composite combination, i.e.,  applying AA($m_1$) in the outer loop and applying AA($m_2$) in the inner loop. In \cite{chen2024non}, the authors consider a nonstationary Anderson acceleration algorithm with optimized damping in each iteration by applying one extra inexpensive optimization, i.e, adding another linear combination of previous $\{\mathbf{x}_{k-i}\}_{i=0}^{m_k}$ in \eqref{eq:xkp1-AA}. Rather than applying AA at every step, the authors in \cite{pratapa2016anderson} employ AA at periodic intervals within the Jacobi iteration, named alternating Anderson-Jacobi method (AAJ), which outperforms the GMRES method of problems considered.  Subsequently, the alternating Anderson-Richardson (AAR) method is extended to incorporate preconditioning, address efficient parallel implementation, and provide serial MATLAB and parallel C/C++ implementations \cite{suryanarayana2019alternating}.  The convergence analysis of AAR method is presented in \cite{lupo2019convergence}, along with a discussion of its connection to GMRES. We note that \cite{lupo2019convergence} uses the framework of Algorithm \ref{alg:AA-alt} to define AAR.
 
 This work extends the alternating strategy discussed in \cite{pratapa2016anderson,suryanarayana2019alternating,lupo2019convergence,feng2024convergence,feng2024anderson} to a novel mixing scheme. A detailed description of the proposed algorithm is presented below. Let $m,s,t$ be integers.   We consider a combination of $t$ steps of fixed-point iteration followed by $s$ steps of AA($m$), and then repeat this procedure. We denote the corresponding method as aAA($m$)[$s$]--FP[$t$], or simply aAA--FP when parameter specification is unnecessary.   The strategy loops until a certain convergence criterion in the residual is satisfied. A pseudo-code of the strategy is given in Algorithm \ref{saAAm_FPn}. Here, we use MATLAB's notations. Specifically, given an $n \times k$ matrix $\mathbf{Y}$, we denote with $\mathbf{Y}(:, 1:d)$ the first $d$ columns of $\mathbf{Y}$. In addition, $\mathrm{ones}(1,m_k)$ represents the vector of length $m_k$ of all ones, and $\mathrm{kron}(\mathbf{e}_k, \mathbf{\bar{x}}_k)$ represents the Kronecker product between $\mathbf{e}_k$ and $\mathbf{\bar{x}}_k$.

 In aAA($m$)[$s$]--FP[$t$] when $t=0$, we recover AA($m$), for any given $s$. When $m=s=t=1$, aAA($m$)[$s$]--FP[$t$] is restarted AA(1), which has been studied in \cite{both2019anderson,krzysik2025asymptotic}. We note that when $s=1$, our aAA($m$)[$s$]--FP[$t$] is not equivalent to the AAR presented in \cite[Algorithm 1]{lupo2019convergence}. In fact,  the AAR method uses $s=1$, $p=s+t$, defining the periodicity, and sets $x_1=q(x_0)$. If we set $p=2$ and $m=\infty$, the full AAR gives
 \begin{equation}\label{eq:ARRp2}
 	\mathbf{x}_0,  \mathbf{x}_1=q(\mathbf{x}_0), \mathbf{x}_2=q(\mathbf{x}_1), \mathbf{x}_3\leftarrow AA(2), \mathbf{x}_4=q(\mathbf{x}_3), \mathbf{x}_5\leftarrow AA(4),\cdots
 \end{equation}
 Here, $\mathbf{x}_j\leftarrow$ AA($i$) means that $\mathbf{x}_j$ is obtained by AA($i$). However, if we set $s=t=1$ and $m=\infty$ in Algorithm \ref{saAAm_FPn},  aAA($\infty$)[1]--FP[1] generates the sequence
 \begin{equation}\label{eq:aAAp2}
 	\mathbf{x}_0,  \mathbf{x}_1=q(\mathbf{x}_0), \mathbf{x}_2 \leftarrow AA(1), \mathbf{x}_3=q(\mathbf{x}_2), \mathbf{x}_4 \leftarrow AA(3), \mathbf{x}_5=q(\mathbf{x}_4),\cdots
 \end{equation}
 If we set $s=1$ and $t=2$ and $m=\infty$ in Algorithm \ref{saAAm_FPn}, aAA($\infty$)[1]--FP[2] generates the sequence
 \begin{equation}\label{eq:aAAp3}
 	\mathbf{x}_0,  \mathbf{x}_1=q(\mathbf{x}_0), \mathbf{x}_2=q(\mathbf{x}_1), \mathbf{x}_3\leftarrow AA(2), \mathbf{x}_4=q(\mathbf{x}_3), \mathbf{x}_5=q(\mathbf{x}_4), \mathbf{x}_6 \leftarrow AA(5), \cdots
 \end{equation}
 From \eqref{eq:ARRp2}, \eqref{eq:aAAp2}, and \eqref{eq:aAAp3}, it is evident that AAR and aAA($m$)[1]--FP[$t$] are different.

 \begin{algorithm}[H]
 	\caption{aAA($m$)[$s$]--FP[$t$]}\label{saAAm_FPn}
 	\begin{algorithmic}[1]
 		\State{Given integers $m,s,t$, and initial guess $\mathbf{x}_0$}
 		\State{Set $\mathbf{x}_1=q(\mathbf{x}_0)$, $\mathbf{r}_0 = \mathbf{x}_1 - \mathbf{x}_0$, $\mathbf{Q}_k=[\mathbf{x}_1 \, ]$, $\mathbf{R}_k=[\mathbf{r}_0 \, ]$}
 		\For{$k=2$ \textbf{until} convergence,}
 		\State{$\mathbf{\bar{x}}_k=q(\mathbf{x}_{k-1})$}
 		\State{$\mathbf{r}_k= \mathbf{\bar{x}}_k- \mathbf{x}_{k-1}$}
 		\State{$m_k=\min(k-1,m)$}
 		\State{$\mathbf{R}_k=[\mathbf{r}_k, \mathbf{R}_k(:,1:m_k)]$}
 		\State{$\mathbf{Q}_k=[\mathbf{\bar{x}}_k, \mathbf{Q}_k(:,1:m_k)]$}
 		\State{$\mathbf{e}_k=\mathrm{ones}(1,m_k)$}
 		\State{$\mathbf{B}_k=\mathrm{kron}(\mathbf{e}_k, \mathbf{r}_k)-\mathbf{R}_k(:,2:\mathrm{end})$}
 		\State{$\mathbf{C}_k=\mathrm{kron}(\mathbf{e}_k, \mathbf{\bar{x}}_k)-\mathbf{Q}_k(:,2:\mathrm{end})$}
 		\If{$\mod{(k-1,s+t)} < t$}
 		\State{Set $\mathbf{x}_{k}= \mathbf{\bar{x}}_{k}$}
 		\Else
 		\State{Solve ${\bm{\gamma}^{(k)}}=\mathrm{argmin}_{\gamma} \| \mathbf{r}_k + \mathbf{B}_k \mathbf{\gamma}\|$}
 		\State{Set $\mathbf{x}_{k} = \mathbf{\bar{x}}_{k} + \mathbf{C}_k {\bm{\gamma}^{(k)}}$}
 		\EndIf
 		\EndFor
 	\end{algorithmic}
 \end{algorithm}
 The diagrams below illustrate our proposed aAA($m$)[$s$]--FP[$t$] method for various choices of $m, s, t$, to aid understanding. In Figure \ref{aAA(3)[1]--FP[3]} we report the diagram for aAA(3)[1]--FP[3]; in this case, we perform three fixed point iterations followed by one AA(3). Further, in Figure \ref{aAA(2)[2]--FP[1]} we report the diagram for aAA(2)[2]--FP[1]; within this setting, we perform one fixed point iteration followed by two AA(2); note that the first Anderson loop performs the full Anderson, that is, the algorithm performs AA(1) followed by AA(2); after this, enough information on the residual is available; thus the algorithm performs AA(2) afterwards.   Figures \ref{aAA(3)[3]--FP[5]} and \ref{aAA(infty)[1]--FP[2]} present aAA(3)[3]--FP[5] and aAA($\infty$)[1]--FP[2], respectively.
 
 \begin{figure}[!ht]
 	\begin{tikzpicture}[
 		roundnode/.style={circle, draw=green!60, fill=green!5, thick, minimum size=3.mm},
 		roundnodeb/.style={circle, draw=blue!60, fill=blue!5, thick, minimum size=3.mm},
 		squarednode/.style={rectangle, draw=red!60, fill=red!5, thick, minimum size=3.mm},
 		squarednodeb/.style={rectangle, draw=white!60, fill=white!5, thick, minimum size=3.mm},
 		]
 		\node[roundnodeb]        (x0)                          {$\mathbf{x}_0$};
 		\node[roundnode]        (fp1)       [right=3mm of x0]  {FP};
 		\node[roundnode]        (fp2)       [right=3mm of fp1] {FP};
 		\node[roundnode]        (fp3)       [right=3mm of fp2] {FP};
 		\node[squarednode]      (AA1)       [right=3mm of fp3] {AA(3)};
 		\node[roundnode]        (fp4)       [right=3mm of AA1] {FP};
 		\node[roundnode]        (fp5)       [right=3mm of fp4] {FP};
 		\node[roundnode]        (fp6)       [right=3mm of fp5] {FP};
 		\node[squarednode]      (AA2)       [right=3mm of fp6] {AA(3)};
 		\node[squarednodeb]     (end)       [right=3mm of AA2] {$\cdots$};
 		
 		\draw[->] (x0.east) -- (fp1.west);
 		\draw[->] (fp1.east) -- (fp2.west);
 		\draw[->] (fp2.east) -- (fp3.west);
 		\draw[->] (fp3.east) -- (AA1.west);
 		\draw[->] (AA1.east) -- (fp4.west);
 		\draw[->] (fp4.east) -- (fp5.west);
 		\draw[->] (fp5.east) -- (fp6.west);
 		\draw[->] (fp6.east) -- (AA2.west);
 		\draw[->] (AA2.east) -- (end.west);
 	\end{tikzpicture}
 	\caption{Diagram loop of aAA(3)[1]--FP[3]}\label{aAA(3)[1]--FP[3]}
 \end{figure}

 \begin{figure}[!ht]
 	\begin{tikzpicture}[
 		roundnode/.style={circle, draw=green!60, fill=green!5, thick, minimum size=4mm},
 		roundnodeb/.style={circle, draw=blue!60, fill=blue!5, thick, minimum size=4mm},
 		squarednode/.style={rectangle, draw=red!60, fill=red!5, thick, minimum size=4mm},
 		squarednodeb/.style={rectangle, draw=white!60, fill=white!5, thick, minimum size=4mm},
 		]
 		\node[roundnodeb]        (x0)                          {$\mathbf{x}_0$};
 		\node[roundnode]        (fp1)       [right=3mm of x0]  {FP};
 		\node[squarednode]      (AA1)       [right=3mm of fp1] {AA(1)};
 		\node[squarednode]      (AA2)       [right=3mm of AA1] {AA(2)};
 		\node[roundnode]        (fp2)       [right=3mm of AA2] {FP};
 		\node[squarednode]      (AA3)       [right=3mm of fp2] {AA(2)};
 		\node[squarednode]      (AA4)       [right=3mm of AA3] {AA(2)};
 		\node[roundnode]        (fp3)       [right=3mm of AA4] {FP};
 		\node[squarednodeb]     (end)       [right=3mm of fp3] {$\cdots$};
 		
 		\draw[->] (x0.east) -- (fp1.west);
 		\draw[->] (fp1.east) -- (AA1.west);
 		\draw[->] (AA1.east) -- (AA2.west);
 		\draw[->] (AA2.east) -- (fp2.west);
 		\draw[->] (fp2.east) -- (AA3.west);
 		\draw[->] (AA3.east) -- (AA4.west);
 		\draw[->] (AA4.east) -- (fp3.west);
 		\draw[->] (fp3.east) -- (end.west);
 	\end{tikzpicture}
 	\caption{Diagram loop of aAA(2)[2]--FP[1]}\label{aAA(2)[2]--FP[1]}
 \end{figure}

 \begin{figure}[!ht]
 	\begin{tikzpicture}[
 		roundnode/.style={circle, draw=green!60, fill=green!5, thick, minimum size=4mm},
 		roundnodeb/.style={circle, draw=blue!60, fill=blue!5, thick, minimum size=4mm},
 		squarednode/.style={rectangle, draw=red!60, fill=red!5, thick, minimum size=4mm},
 		squarednodeb/.style={rectangle, draw=white!60, fill=white!5, thick, minimum size=4mm},
 		]
 		\node[roundnodeb]        (x0)                          {$\mathbf{x}_0$};
 		\node[roundnode]        (fp1)       [right=3mm of x0]  {FP};
 		\node[roundnode]        (fp2)       [right=3mm of fp1] {FP};
 		\node[roundnode]        (fp3)       [right=3mm of fp2] {FP};
 		\node[roundnode]        (fp4)       [right=3mm of fp3] {FP};
 		\node[roundnode]        (fp5)       [right=3mm of fp4] {FP};
 		\node[squarednode]      (AA1)       [right=3mm of fp5] {AA(3)};
 		\node[squarednode]      (AA2)       [right=3mm of AA1] {AA(3)};
 		\node[squarednode]      (AA3)       [right=3mm of AA2] {AA(3)};
 		\node[squarednodeb]     (end)       [right=3mm of AA3] {$\cdots$};
 		
 		\draw[->] (x0.east) -- (fp1.west);
 		\draw[->] (fp1.east) -- (fp2.west);
 		\draw[->] (fp2.east) -- (fp3.west);
 		\draw[->] (fp3.east) -- (fp4.west);
 		\draw[->] (fp4.east) -- (fp5.west);
 		\draw[->] (fp5.east) -- (AA1.west);
 		\draw[->] (AA1.east) -- (AA2.west);
 		\draw[->] (AA2.east) -- (AA3.west);
 		\draw[->] (AA3.east) -- (end.west);
 	\end{tikzpicture}
 	\caption{Diagram loop of aAA(3)[3]--FP[5]}\label{aAA(3)[3]--FP[5]}
 \end{figure}

 \begin{figure}[!ht]
 	\begin{tikzpicture}[
 		roundnode/.style={circle, draw=green!60, fill=green!5, thick, minimum size=4mm},
 		roundnodeb/.style={circle, draw=blue!60, fill=blue!5, thick, minimum size=4mm},
 		squarednode/.style={rectangle, draw=red!60, fill=red!5, thick, minimum size=4mm},
 		squarednodeb/.style={rectangle, draw=white!60, fill=white!5, thick, minimum size=4mm},
 		]
 		\node[roundnodeb]        (x0)                          {$\mathbf{x}_0$};
 		\node[roundnode]        (fp1)       [right=3mm of x0]  {FP};
 		\node[roundnode]        (fp2)       [right=3mm of fp1] {FP};
 		\node[squarednode]      (AA1)       [right=3mm of fp2] {AA(2)};
 		\node[roundnode]        (fp3)       [right=3mm of AA1] {FP};
 		\node[roundnode]        (fp4)       [right=3mm of fp3] {FP};
 		\node[squarednode]      (AA2)       [right=3mm of fp4] {AA(5)};
 		\node[roundnode]        (fp5)       [right=3mm of AA2] {FP};
 		\node[roundnode]        (fp6)       [right=3mm of fp5] {FP};
 		\node[squarednode]      (AA3)       [right=3mm of fp6] {AA(8)};
 		\node[squarednodeb]     (end)       [right=3mm of AA3] {$\cdots$};
 		
 		\draw[->] (x0.east) -- (fp1.west);
 		\draw[->] (fp1.east) -- (fp2.west);
 		\draw[->] (fp2.east) -- (AA1.west);
 		\draw[->] (AA1.east) -- (fp3.west);
 		\draw[->] (fp3.east) -- (fp4.west);
 		\draw[->] (fp4.east) -- (AA2.west);
 		\draw[->] (AA2.east) -- (fp5.west);
 		\draw[->] (fp5.east) -- (fp6.west);
 		\draw[->] (fp6.east) -- (AA3.west);
 		\draw[->] (AA3.east) -- (end.west);
 	\end{tikzpicture}
 	\caption{Diagram loop of aAA($\infty$)[1]--FP[2]}\label{aAA(infty)[1]--FP[2]}
 \end{figure}

 Our proposed algorithm can be applied to both linear and nonlinear problems. We will provide a convergence analysis for the linear case. 
 
 \section{Convergence analysis for the linear case}\label{sec:convergence}
 Consider solving
 \begin{equation}\label{eq:Ax=b}
 	\mathbf{Ax}=\mathbf{b}, 
 \end{equation}
 where $\mathbf{A}\in\mathbb{R}^{n\times n}$ and $\mathbf{x}^*\in\mathbb{R}^{n}$ is the exact solution. We consider the Richardson iteration as the fixed-point iteration for solving \eqref{eq:Ax=b}, which is given by
 \begin{equation}\label{eq:FP-linear}
 	\mathbf{x}_{k+1}=q(\mathbf{x}_k)=\mathbf{M}\mathbf{x}_k+\mathbf{b},
 \end{equation}
 where $\mathbf{M}=\mathbf{I}-\mathbf{A}$. We define the $k$th error as $\mathbf{e}_k=\mathbf{x}_k-\mathbf{x}^*$. From \eqref{eq:kthresidual}, we have $\mathbf{r}_k=\mathbf{b}-\mathbf{A}\mathbf{x}_k=-\mathbf{A}\mathbf{e}_k$. In this section, we provide theoretical results on the convergence behavior of aAA($m$)[$s$]--FP[$t$] for some special choices of $m, s, t$ and $\mathbf{M}$. 
 
 In the literature, it has been shown \cite{walker2011anderson,potra2013characterization} that under certain conditions the iterates, $\mathbf{x}_k$, of AA($\infty$) can be recovered from GMRES iterate $\mathbf{x}_{k-1}^G$, i.e., $\mathbf{x}_k=q(\mathbf{x}_{k-1}^G)$. In \cite{lupo2019convergence}, the author establishes a connection between full AAR and GMRES.  We present a similar result for aAA($\infty$)[$1$]--FP[$t$], which can be derived following \cite[Theorem 1]{lupo2019convergence}.  
 \begin{theorem} 
 	Consider the aAA($\infty$)[$1$]--FP[$t$] method described in Algorithm \ref{saAAm_FPn}, applied to solve \eqref{eq:Ax=b} using the fixed-point iteration \eqref{eq:FP-linear}. Denote its iterates as $\mathbf{x}_k$. Let $p=t+1$. Assume that there exists a $k_0$ such that the norms of the residuals of GMRES are strictly decreasing for the first $k_0$th iteration. Then, we have 
 	\begin{equation}\label{eq:eqv-p-period}
 		\mathbf{x}_{jp} =q(\mathbf{x}_{jp-1}^G), \quad  jp< k_0,
 	\end{equation}
 	where $\mathbf{x}_{jp-1}^G$ is the $(jp-1)$th iterate of GMRES.
 \end{theorem}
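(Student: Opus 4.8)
The plan is to follow the strategy of \cite[Theorem 1]{lupo2019convergence}, adapted to the period-$p$ pattern with $p=t+1$: I want to show that the single full-window Anderson step closing each cycle performs exactly the GMRES residual minimization over the Krylov subspace generated so far. Two linear identities drive everything. First, since $q(\mathbf{x})=\mathbf{Mx}+\mathbf{b}$ and $r(\mathbf{x})=\mathbf{b}-\mathbf{Ax}$ with $\mathbf{M}=\mathbf{I}-\mathbf{A}$ (so $\mathbf{A}$ and $\mathbf{M}$ commute), a fixed-point step satisfies $r(q(\mathbf{x}))=\mathbf{M}\,r(\mathbf{x})$. Second, in Algorithm \ref{saAAm_FPn} the column relations $\mathbf{r}_k-\mathbf{r}_{k-i}=-\mathbf{A}(\mathbf{x}_{k-1}-\mathbf{x}_{k-1-i})$ and $\bar{\mathbf{x}}_k-\bar{\mathbf{x}}_{k-i}=\mathbf{M}(\mathbf{x}_{k-1}-\mathbf{x}_{k-1-i})$ give $\mathbf{A}\mathbf{C}_k=-\mathbf{M}\mathbf{B}_k$, so that
\begin{equation*}
r(\mathbf{x}_k)=\mathbf{M}\bigl(\mathbf{r}_k+\mathbf{B}_k\bm{\gamma}^{(k)}\bigr).
\end{equation*}
Writing $\mathbf{w}_k:=\mathbf{r}_k+\mathbf{B}_k\bm{\gamma}^{(k)}$ for the minimum-norm element of the affine set $\mathbf{r}_k+\mathrm{range}(\mathbf{B}_k)$ selected by the least-squares solve, the residual produced by the Anderson step is thus $\mathbf{M}\mathbf{w}_k$.

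The core is an induction on the cycle index $j$ showing that the residuals produced up to the $j$th Anderson step (index $jp$) satisfy $\mathbf{r}_i=\rho_i(\mathbf{A})\mathbf{r}_0^G$ with $\rho_i$ of exact degree $i-1$ and $\rho_i(0)=1$, where $\mathbf{r}_0^G=\mathbf{b}-\mathbf{Ax}_0=\mathbf{r}_1$ and $\mathcal{K}_i(\mathbf{A},\mathbf{r}_0^G)=\mathrm{span}\{\mathbf{r}_0^G,\mathbf{Ar}_0^G,\dots,\mathbf{A}^{i-1}\mathbf{r}_0^G\}$. The base case is direct: the first $t$ fixed-point steps give $\mathbf{r}_i=\mathbf{M}^{i-1}\mathbf{r}_0^G$ for $i=1,\dots,t+1$, of degrees $0,\dots,t$ with unit constant term. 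For the inductive step, using $m=\infty$ (so $m_k=k-1$) and telescoping, $\mathrm{range}(\mathbf{B}_{jp})=\mathrm{span}\{\mathbf{r}_i-\mathbf{r}_{i-1}:i=2,\dots,jp\}$; since all $\rho_i$ share constant term $1$ and have distinct exact degrees, these differences span $\mathbf{A}\,\mathcal{K}_{jp-1}(\mathbf{A},\mathbf{r}_0^G)=\mathrm{span}\{\mathbf{Ar}_0^G,\dots,\mathbf{A}^{jp-1}\mathbf{r}_0^G\}$, while $\mathbf{r}_{jp}\in\mathbf{r}_0^G+\mathbf{A}\,\mathcal{K}_{jp-1}$. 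Hence the affine set $\mathbf{r}_{jp}+\mathrm{range}(\mathbf{B}_{jp})$ equals $\mathbf{r}_0^G+\mathbf{A}\,\mathcal{K}_{jp-1}(\mathbf{A},\mathbf{r}_0^G)$, which is exactly the affine residual space minimized by GMRES at step $jp-1$. By GMRES optimality $\mathbf{w}_{jp}=\mathbf{r}_{jp-1}^G$, so $r(\mathbf{x}_{jp})=\mathbf{M}\mathbf{r}_{jp-1}^G=r\bigl(q(\mathbf{x}_{jp-1}^G)\bigr)$, and since $\mathbf{A}$ is nonsingular the map $\mathbf{x}\mapsto\mathbf{b}-\mathbf{Ax}$ is injective, yielding \eqref{eq:eqv-p-period}. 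The induction is then carried forward by noting $r(\mathbf{x}_{jp})=\mathbf{M}\,\mathbf{r}_{jp-1}^G=\mathbf{M}\,p_{jp-1}(\mathbf{A})\mathbf{r}_0^G$ (with $p_{jp-1}$ the GMRES residual polynomial) has degree $jp$ and unit constant term, and the subsequent $t$ fixed-point steps multiply by $\mathbf{M}$, filling degrees $jp,\dots,(j+1)p-1$ with unit constant term, which re-establishes the hypothesis at $j+1$.

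I expect the exact-degree and linear-independence bookkeeping to be the main obstacle, and this is precisely where the GMRES hypothesis enters. The telescoped differences span the full $(jp-1)$-dimensional space $\mathbf{A}\,\mathcal{K}_{jp-1}$, and $\mathbf{r}_{jp}$ contributes a genuine degree-$(jp-1)$ part, only if each residual polynomial has nonvanishing leading coefficient; after the first cycle this hinges on $p_{jp-1}$ having exact degree $jp-1$. The assumption that the GMRES residual norms strictly decrease for the first $k_0$ iterations rules out stagnation (where $\mathbf{r}^G$ would repeat and the polynomial degree would drop) and forces $\dim\mathcal{K}_i(\mathbf{A},\mathbf{r}_0^G)=i$ for $i<k_0$, keeping the Krylov subspaces nondegenerate so that the span identities and the GMRES-optimality matching remain valid for every $jp<k_0$. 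I would verify the commutation $\mathbf{AM}=\mathbf{MA}$ and the constant-term claim $\rho_i(0)=1$ carefully, as these underpin both the identification of the affine space and the degree count.
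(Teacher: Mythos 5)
Your proposal is correct and follows essentially the same route as the paper, which likewise reduces the closing full-window Anderson step to the GMRES minimization over the Krylov residual space in the spirit of \cite[Theorem 1]{lupo2019convergence}; the paper phrases this at the iterate level (writing $\mathbf{x}_{jp}=q(\hat{\mathbf{x}}_{jp-1})$ with $\hat{\mathbf{x}}_{jp-1}$ an affine combination whose residual is minimized) and delegates the Krylov-space identification to the cited reference, whereas you carry it out at the residual level via $\mathbf{A}\mathbf{C}_k=-\mathbf{M}\mathbf{B}_k$ and make the polynomial-degree and non-stagnation bookkeeping explicit. The extra detail you supply is consistent with, and effectively a fleshed-out version of, the paper's argument.
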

 \begin{proof}
 	Let $jp=k+1$. From \cite{de2024anderson}, we know that the $(k+1)$th update of aAA($\infty$)[1]--FP[$t$] (i.e., $m_k=k$) is given by
 	\begin{equation*}
 		\mathbf{x}_{k+1}=(1+\sum_{i=1}^k\gamma_i)\mathbf{M}\mathbf{x}_k-\sum_{i=1}^k\gamma_iM\mathbf{x}_{k-i}+\mathbf{b}=\mathbf{M}( (1+\sum_{i=1}^k\gamma_i)\mathbf{x}_k-\sum_{i=1}^k\gamma_i\mathbf{x}_{k-i})+\mathbf{b}=q(\mathbf{\hat{x}}_k),
 	\end{equation*}
 	where $\mathbf{\hat{x}}_k=(1+\sum_{i=1}^k\gamma_i)\mathbf{x}_k-\sum_{i=1}^k\gamma_i\mathbf{x}_{k-i}$. The minimizing problem \eqref{eq:min-AA} is
 	\begin{equation*}
 		\min_{\left(\gamma_1^{(k)},\cdots, \gamma_{k}^{(k)}\right)} \left\|(1+\sum_{i=1}^k\gamma_i)\mathbf{r}_k-\sum_{i=1}^k\gamma_i\mathbf{r}_{k-i}\right\|^2=\min_{\left(\gamma_1^{(k)},\cdots, \gamma_{k}^{(k)}\right)} \|r(\mathbf{\hat{x}}_k)\|^2.
 	\end{equation*}
 	Following the idea in \cite[Theorem 1]{lupo2019convergence}, we have $\mathbf{\hat{x}}_k=\mathbf{x}_k^G$, where $k=jp-1$. Thus, we obtain the desired result.
 \end{proof} 
 
 Note that in the above theorem, when $t=0$, aAA($\infty$)[$1$]--FP[$t$] is AA($\infty$), and \eqref{eq:eqv-p-period} is $ \mathbf{x}_{j} =q(\mathbf{x}_{j-1}^G)$, which is consistent with the well-known result in \cite{walker2011anderson,potra2013characterization}. 
 
 For the AA($m$) method, \cite{Greif_He} has shown that the least-squares problem defined in \eqref{eq:min-AA} is minimizing $\|\mathbf{M}^{-1}\mathbf{r}_{k+1}\|$, provided that $\mathbf{M}$ is invertible, that is,
 \begin{displaymath}\label{eq:AA-minrk}
 \min_{\left(\gamma_1^{(k)},\cdots, \gamma_{k}^{(k)}\right) }\|\mathbf{M}^{-1}\mathbf{r}_{k+1}\|=	\min_{\left(\gamma_1^{(k)},\cdots, \gamma_{k}^{(k)}\right)}\|r(\mathbf{x}_k)+\sum_{i=1}^{m_k} \gamma^{(k)}_i\left(r(\mathbf{x}_k)-r(\mathbf{x}_{k-i})\right)\|^2.
 \end{displaymath}
Moreover, \cite{Greif_He} has established that
 \begin{equation}\label{eq:AA-nonincre}
 	\|\mathbf{M}^{-1}\mathbf{r}_{k+1}\|\leq \|r_k\|.
 \end{equation}
 Although \eqref{eq:AA-nonincre} does not guarantee that $\|\mathbf{r}_k\|$ decreases monotonically, we will show that this is indeed the case when $\|\mathbf{M}\|<1$.  Next, we provide a convergence analysis for $\|\mathbf{M}\|<1$ for aAA($m$)[$s$]--FP[$t$].
 
 \begin{theorem}
 	Assume that $\|\mathbf{M}\|=c<1$. Then, for any initial guess $\mathbf{x}_0$, the aAA($m$)[$s$]--FP[$t$] method converges.
 \end{theorem}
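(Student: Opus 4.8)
The plan is to prove that every iteration contracts the residual by the fixed factor $c$, so that the residuals — and hence the errors — vanish geometrically, independently of the particular pattern of $t$ fixed-point and $s$ Anderson steps. I would work with the true residual $\mathbf{r}_k := q(\mathbf{x}_k)-\mathbf{x}_k = \mathbf{b}-\mathbf{A}\mathbf{x}_k = -\mathbf{A}\mathbf{e}_k$ of the $k$th iterate, and repeatedly use that $q$ is affine with linear part $\mathbf{M}$, so that $q(\mathbf{x}_k)-\mathbf{x}^*=\mathbf{M}\mathbf{e}_k$ and $q(\mathbf{x}_j)-q(\mathbf{x}_\ell)=\mathbf{M}(\mathbf{x}_j-\mathbf{x}_\ell)$, together with the fact that $\mathbf{A}=\mathbf{I}-\mathbf{M}$ commutes with $\mathbf{M}$.

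The key observation I would exploit is that both step types obey the single residual recursion $\mathbf{r}_{k+1}=\mathbf{M}\mathbf{w}_k$ with $\|\mathbf{w}_k\|\le\|\mathbf{r}_k\|$. For a fixed-point step $\mathbf{x}_{k+1}=q(\mathbf{x}_k)$ one gets $\mathbf{e}_{k+1}=\mathbf{M}\mathbf{e}_k$, hence $\mathbf{r}_{k+1}=\mathbf{M}\mathbf{r}_k$, i.e.\ $\mathbf{w}_k=\mathbf{r}_k$. For an Anderson step I would unwind the linear update \eqref{eq:xkp1-AA} exactly as in the proof of the preceding theorem to obtain $\mathbf{e}_{k+1}=\mathbf{M}\big[\mathbf{e}_k+\sum_{i=1}^{m_k}\gamma_i^{(k)}(\mathbf{e}_k-\mathbf{e}_{k-i})\big]$; multiplying by $-\mathbf{A}$ and using commutativity yields
\begin{equation*}
\mathbf{r}_{k+1}=\mathbf{M}\mathbf{w}_k,\qquad \mathbf{w}_k:=\mathbf{r}_k+\sum_{i=1}^{m_k}\gamma_i^{(k)}\left(\mathbf{r}_k-\mathbf{r}_{k-i}\right).
\end{equation*}
Crucially, $\mathbf{w}_k$ is precisely the residual combination whose norm \eqref{eq:min-AA} minimizes over $\gamma^{(k)}$; since $\gamma^{(k)}=0$ is feasible and attains the value $\|\mathbf{r}_k\|$, the minimizer satisfies $\|\mathbf{w}_k\|\le\|\mathbf{r}_k\|$.

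In either case, submultiplicativity of the spectral norm gives $\|\mathbf{r}_{k+1}\|=\|\mathbf{M}\mathbf{w}_k\|\le\|\mathbf{M}\|\,\|\mathbf{w}_k\|\le c\|\mathbf{r}_k\|$. Iterating over all steps then yields $\|\mathbf{r}_k\|\le c^k\|\mathbf{r}_0\|\to 0$ since $c<1$. Because $\|\mathbf{M}\|<1$, the matrix $\mathbf{A}=\mathbf{I}-\mathbf{M}$ is invertible by the Neumann series, so $\mathbf{e}_k=-\mathbf{A}^{-1}\mathbf{r}_k$ gives $\|\mathbf{e}_k\|\le\|\mathbf{A}^{-1}\|\,\|\mathbf{r}_k\|\to 0$, that is, $\mathbf{x}_k\to\mathbf{x}^*$, which is the claimed convergence.

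I expect the main work to be conceptual bookkeeping rather than a hard estimate. The one point to handle with care is the indexing: in Algorithm \ref{saAAm_FPn} the stored residual equals $\bar{\mathbf{x}}_k-\mathbf{x}_{k-1}=r(\mathbf{x}_{k-1})$, so the least-squares objective there is assembled from the residual of the \emph{previous} iterate and its history, and one must verify that the vector multiplying $\mathbf{M}$ in the residual recursion is literally that objective before invoking the $\gamma^{(k)}=0$ feasibility bound. I would deliberately phrase the contraction through the factored identity $\mathbf{r}_{k+1}=\mathbf{M}\mathbf{w}_k$ with $\|\mathbf{w}_k\|\le\|\mathbf{r}_k\|$ rather than through \eqref{eq:AA-nonincre}: the latter only controls $\|\mathbf{M}^{-1}\mathbf{r}_{k+1}\|$ and would additionally require $\mathbf{M}$ to be invertible, whereas the factored form needs only $\|\mathbf{M}\|\le c$ and the invertibility of $\mathbf{A}$.
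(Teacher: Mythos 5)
Your proposal is correct and follows essentially the same route as the paper: a case split showing that each individual step, whether fixed-point or Anderson, contracts the residual norm by the factor $c=\|\mathbf{M}\|$, followed by iteration to get geometric decay. The only differences are cosmetic — the paper simply cites Toth--Kelley (Theorem 2.1 of \cite{toth2015convergence}) for the bound $\|\mathbf{r}_{k+1}\|\le c\|\mathbf{r}_k\|$ on an AA step, whereas you derive it directly from the factorization $\mathbf{r}_{k+1}=\mathbf{M}\mathbf{w}_k$ and the feasibility of $\gamma^{(k)}=0$ in \eqref{eq:min-AA}, and you additionally spell out the passage from $\|\mathbf{r}_k\|\to 0$ to $\mathbf{x}_k\to\mathbf{x}^*$ via invertibility of $\mathbf{A}=\mathbf{I}-\mathbf{M}$.
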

 \begin{proof}
 	According to the definition of aAA($m$)[$s$]--FP[$t$], we know that its iterate $\mathbf{x}_{k+1}$ is either generated from AA($m$) or the fixed-point iteration. If $\mathbf{x}_{k+1}$ is obtained from AA($m$), then from \cite[Theorem 2.1]{toth2015convergence}, we know $\|\mathbf{r}_{k+1}\|\leq c \|\mathbf{r}_k\|$. If $\mathbf{x}_{k+1}$ is obtained by applying one fixed-point iteration given by \eqref{eq:FP-linear}, then we have $\mathbf{r}_{k+1}=\mathbf{M}\mathbf{r}_k$. It follows that $\|\mathbf{r}_{k+1}\|\leq c \|\mathbf{r}_k\|$. As a result, $\|\mathbf{r}_{k+1}\|\leq c \|\mathbf{r}_k\|\leq c^{k+1}\|\mathbf{r}_0\|$ for all $k$. Since $c<1$, $\|\mathbf{r}_{k+1}\|\rightarrow 0$.
 \end{proof}
 
 For $\|\mathbf{M}\|\geq 1$, it is not clear whether the aAA($m$)[$s$]--FP[$t$] method converges. In the following, we will derive some sufficient conditions such that aAA($m$)[$s$]--FP[$t$] converges. Before doing that, we define the following matrices with dimensions $n\times m_k$, 
 \begin{equation*}
 	\mathbf{X}_k= 
 	\begin{bmatrix}
 		\mathbf{x}_{k-m_k+1}-\mathbf{x}_{k-m_k},& \mathbf{x}_{k-m_k+2}-\mathbf{x}_{k-m_k+1}, &\cdots, & \mathbf{x}_k-\mathbf{x}_{k-1}
 	\end{bmatrix},
 \end{equation*}
 and 
 \begin{equation*}
 	\mathbf{D}_k=
 	\begin{bmatrix}
 		r(\mathbf{x}_{k-m_k+1})-r(\mathbf{x}_{k-m_k}),& r(\mathbf{x}_{k-m_k+2})-r(\mathbf{x}_{k-m_k+1}),& \cdots, & r(\mathbf{x}_k)-r(\mathbf{x}_{k-1})
 	\end{bmatrix}.
 \end{equation*}
 Let
 \begin{equation*}
 	\bm{\tau}^{(k)}=\begin{bmatrix}
 		\tau_{m_k}^{(k)}, & \tau_{m_k-1}^{(k)},& \cdots, &\tau_{1}^{(k)}
 	\end{bmatrix}^T. 
 \end{equation*}
 Then, the least-squares problem \eqref{eq:AA-LSQ-alt} is equivalent to
 \begin{equation*} 
 	\min_{\bm{\tau}^{(k)}}\|r(\mathbf{x}_k)+\mathbf{D}_k\bm{\tau}^{(k)} \|^2,
 \end{equation*}
 which leads to
 \begin{equation*}
 	\bm{\tau}^{(k)}=- (\mathbf{D}_k^T \mathbf{D}_k)^{-1}\mathbf{D}_k^T r(\mathbf{x}_k).
 \end{equation*}
 We rewrite $\mathbf{X}_k$ and $\mathbf{D}_k$ as  
 \begin{equation}\label{eq:defXk-ek}
 	\mathbf{X}_k=
 	\begin{bmatrix}
 		\mathbf{e}_{k-m_k+1}-\mathbf{e}_{k-m_k}& 	\mathbf{e}_{k-m_k+2}-\mathbf{e}_{k-m_k+1}, & \cdots,& \mathbf{e}_k-\mathbf{e}_{k-1}
 	\end{bmatrix}.
 \end{equation}
 and 
 \begin{equation*}
 	\mathbf{D}_k=-\mathbf{A}
 	\begin{bmatrix}
 		\mathbf{e}_{k-m_k+1}-\mathbf{e}_{k-m_k}& 	\mathbf{e}_{k-m_k+2}-\mathbf{e}_{k-m_k+1}, & \cdots,& \mathbf{e}_k-\mathbf{e}_{k-1}
 	\end{bmatrix}=-\mathbf{A}\mathbf{X}_k.
 \end{equation*}
 
 Using the above notations,  \cite{yang2022anderson}  has shown that \eqref{AA-alt-update} can be rewritten as
 \begin{equation*} 
 	\mathbf{x}_{k+1} =\mathbf{x}_k+(\mathbf{I}-\mathbf{S}_k)\mathbf{r}_k,
 \end{equation*} 
 where
 \begin{equation}\label{eq:def-Sk}
 	\mathbf{S}_k =(\mathbf{D}_k+\mathbf{X}_k)(\mathbf{D}_k^T\mathbf{D}_k)^{-1}\mathbf{D}_k^T.
 \end{equation}
 In addition,
 \begin{equation}\label{eq:ekrk}
 	\mathbf{e}_{k+1}= (\mathbf{I}-(\mathbf{I}-\mathbf{S}_k)\mathbf{A})\mathbf{e}_k, \quad  \mathbf{r}_{k+1}= (\mathbf{I}-\mathbf{A}(\mathbf{I}-\mathbf{S}_k))\mathbf{r}_k.
 \end{equation}
 For our analysis, we define the following Krylov matrix
 \begin{displaymath}
 	\mathcal{K}_s(\mathbf{A},\mathbf{b})=[\mathbf{b}, \mathbf{Ab}, \cdots, \mathbf{A}^{s-1}\mathbf{b}] \in \mathbb{R}^{n\times s}.
 \end{displaymath}

 In \cite{yang2022anderson}, the authors present a one-step analysis of AA when $\mathbf{M}$ is symmetric, that is, comparing the solution error $\mathbf{e}_{k+1}$ after one step of AA, where $\{\mathbf{x}_j=q(\mathbf{x}_{j-1})\}_{j=1}^k$, to the solution error after $(k+1)$ fixed-point iterations.  This idea can be adapted to perform the convergence analysis of our aAA($m$)[$s$]--FP[$t$] method. We first extend the results of \cite[Lemma 3.1]{yang2022anderson}, originally formulated for the symmetric case, to the more general scenario, where $\mathbf{M}$ is diagonalizable and invertible. 
 \begin{lemma}\label{lem:diag-ek-rk}
 	Given $\mathbf{x}_0$, let $\{\mathbf{x}_j=q(\mathbf{x}_{j-1})\}_{j=1}^k$. Assume that $\mathbf{x}_{k+1}$ is obtained by AA($m$) with $m\leq k$ defined in \eqref{eq:xkp1-AA}. Let $\mathbf{M}$ be an $n\times n$  diagonalizable matrix with eigenvalue decomposition $\mathbf{M}= \mathbf{W}\mathbf{\Lambda} \mathbf{W}^{-1}$. Suppose that the eigenvalues of $\mathbf{M}$ are contained in an interval $[a, b]$ that does not contain 0 or 1. Then,
 	\begin{equation}\label{eq:ekplus-ek-relation}
 		\mathbf{e}_{k+1}=\mathbf{W} \mathbf{\tilde{E}}\mathbf{W}^{-1}\mathbf{M}\mathbf{e}_k,
 	\end{equation}
 	and
 	\begin{displaymath}
 		\mathbf{r}_{k+1}=\mathbf{M} \mathbf{R}\mathbf{r}_k.
 	\end{displaymath}
 	Here, $\mathbf{\tilde{E}}=\mathbf{\Pi} (\mathbf{I}-\mathbf{K}(\mathbf{K}^T\mathbf{W}^T\mathbf{W}\mathbf{K})^{-1}\mathbf{K}^T\mathbf{W}^T\mathbf{W})\mathbf{\Pi}^{-1}$  with $\mathbf{\Pi}=\mathbf{\Lambda}(\mathbf{\Lambda}-\mathbf{I})^{-1}$ and $\mathbf{K}=\mathcal{K}_{m_k}(\mathbf{\Lambda},\mathbf{H}\mathbf{W}^{-1}\mathbf{e}_0)$, where $\mathbf{H}=(\mathbf{\Lambda}-\mathbf{I})^2\mathbf{\Lambda}^{k-m_k}$, and 
 	\begin{equation}\label{eq:def-R-lem1}        
 		\mathbf{R}=\mathbf{I}-\mathbf{W}\mathbf{K}(\mathbf{K}^T\mathbf{W}^T\mathbf{W}\mathbf{K})^{-1}\mathbf{K}^T\mathbf{W}^T.
 	\end{equation}
 \end{lemma}
 \begin{proof}
 We first show that
 	\begin{equation*}
 		\mathbf{X}_k=\mathbf{W}(\mathbf{\Lambda}-\mathbf{I})^{-1}\mathbf{K}\quad \text{and} \quad \mathbf{D}_k=\mathbf{WK}.
 	\end{equation*}
 For $1\leq j\leq k$, $\mathbf{x}_j=\mathbf{M}\mathbf{x}_{j-1}+b$, we have $\mathbf{e}_j=\mathbf{M}^j\mathbf{e}_0$. It follows that $$\mathbf{e}_j-\mathbf{e}_{j-1}=(\mathbf{W\Lambda}^j\mathbf{W}^{-1}-\mathbf{W\Lambda}^{j-1}\mathbf{W}^{-1})\mathbf{e}_0=\mathbf{W(\Lambda-I)\Lambda}^{j-1}\mathbf{W}^{-1}\mathbf{e}_0.$$
 		From \eqref{eq:defXk-ek}, we have
 		\begin{align*}
 			\mathbf{X}_k&=\mathbf{W(\Lambda-I)}\mathbf{\Lambda}^{k-m_k}[\mathbf{W}^{-1}\mathbf{e}_0, \mathbf{\Lambda W}^{-1}\mathbf{e}_0, \cdots, \mathbf{\Lambda}^{m_k-1} \mathbf{W}^{-1}\mathbf{e}_0]\\
 			&=\mathbf{W(\Lambda-I)}^{-1} (\mathbf{\Lambda-I})^{2}\mathbf{\Lambda}^{k-m_k}[\mathbf{W}^{-1}\mathbf{e}_0, \mathbf{\Lambda W}^{-1}\mathbf{e}_0, \cdots, \mathbf{\Lambda}^{m_k-1} \mathbf{W}^{-1}\mathbf{e}_0]\\
 			&=\mathbf{W(\Lambda-I)}^{-1}[\mathbf{HW}^{-1}\mathbf{e}_0, \mathbf{\Lambda HW}^{-1}\mathbf{e}_0, \cdots, \mathbf{\Lambda}^{m_k-1} \mathbf{HW}^{-1}\mathbf{e}_0]\\
 			&=\mathbf{W(\Lambda-I)}^{-1}\mathcal{K}_{m_k}(\mathbf{\Lambda},\mathbf{H}\mathbf{W}^{-1}\mathbf{e}_0),\\
 			&=\mathbf{W(\Lambda-I)}^{-1}\mathbf{K},
 		\end{align*}
 		which is the desired result.

 	Since $\mathbf{D}_k=-\mathbf{A}\mathbf{X}_k$ and $\mathbf{A}=\mathbf{I}-\mathbf{M}$, we have
 	\begin{equation*}
 		\mathbf{D}_k=-(\mathbf{I}-\mathbf{W\Lambda W}^{-1})\mathbf{W}(\mathbf{\Lambda}-\mathbf{I})^{-1}\mathbf{K}=\mathbf{WK}.
 	\end{equation*}
 	Moreover, using \eqref{eq:def-Sk} leads to
 	\begin{equation*}
 		\mathbf{S}_k=\mathbf{W}\mathbf{\Lambda}(\mathbf{\Lambda}-\mathbf{I})^{-1}\mathbf{K}(\mathbf{K}^T\mathbf{W}^T\mathbf{WK})^{-1}\mathbf{K}^T\mathbf{W}^T.
 	\end{equation*}
 	Then, we can further obtain
 	\begin{align*}
 		&\mathbf{I}-(\mathbf{I}-\mathbf{S}_k)\mathbf{A}\\
 		=&\mathbf{M}+\mathbf{S}_k(\mathbf{I}-\mathbf{M})\\
 		=&\mathbf{W}\mathbf{\Lambda}\mathbf{W}^{-1} + \mathbf{W}\mathbf{\Lambda}(\mathbf{\Lambda}-\mathbf{I})^{-1}\mathbf{K}(\mathbf{K}^T\mathbf{W}^T\mathbf{WK})^{-1}\mathbf{K}^T\mathbf{W}^T\mathbf{W}(\mathbf{I}-\mathbf{\Lambda}) \mathbf{W}^{-1}\\
 		=&\mathbf{W}\mathbf{\Lambda}(\mathbf{\Lambda}-\mathbf{I})^{-1}\left(\mathbf{I}- \mathbf{K}(\mathbf{K}^T\mathbf{W}^T\mathbf{WK})^{-1}\mathbf{K}^T\mathbf{W}^T \mathbf{W}\right) (\mathbf{\Lambda}-\mathbf{I})\mathbf{W}^{-1}\\
 		=&\mathbf{W}\mathbf{\Lambda}(\mathbf{\Lambda}-\mathbf{I})^{-1}\left(\mathbf{I}- \mathbf{K}(\mathbf{K}^T\mathbf{W}^T\mathbf{W}\mathbf{K})^{-1}\mathbf{K}^T\mathbf{W}^T \mathbf{W}\right) (\mathbf{\Lambda}-\mathbf{I}) \mathbf{\Lambda}^{-1}\mathbf{W}^{-1}\mathbf{M}.
 	\end{align*}
 	Using \eqref{eq:ekrk}, we obtain the first desired result.
 	
 	Next, we compute
 	\begin{align*}
 		\mathbf{I}-\mathbf{A}(\mathbf{I}-\mathbf{S}_k)&=\mathbf{M}+(\mathbf{I}-\mathbf{M})\mathbf{S}_k\\
 		&=\mathbf{W\Lambda}\mathbf{W}^{-1} +\mathbf{W}(\mathbf{I}-\mathbf{\Lambda})\mathbf{\Lambda}(\mathbf{\Lambda}-\mathbf{I})^{-1}\mathbf{K}(\mathbf{K}^T\mathbf{W}^T\mathbf{WK})^{-1}\mathbf{K}^T\mathbf{W}^T \\
 		&=\mathbf{W\Lambda} \mathbf{W}^{-1} - \mathbf{W}\mathbf{\Lambda K}(\mathbf{K}^T\mathbf{W}^T\mathbf{WK})^{-1}\mathbf{K}^T\mathbf{W}^T \\
 		&=\mathbf{W}\mathbf{\Lambda} \mathbf{W}^{-1}(\mathbf{I}-\mathbf{WK}(\mathbf{K}^T\mathbf{W}^T\mathbf{WK})^{-1}\mathbf{K}^T\mathbf{W}^T)\\
 		&=\mathbf{M}(\mathbf{I}-\mathbf{WK}(\mathbf{K}^T\mathbf{W}^T\mathbf{WK})^{-1}\mathbf{K}^T\mathbf{W}^T).
 	\end{align*}
 	Using \eqref{eq:ekrk}, we obtain the second desired result.  
 \end{proof}
 We note that when $\mathbf{M}$ is symmetric, $\mathbf{W}$ is a unitary matrix, that is, $\mathbf{W}^T\mathbf{W}=\mathbf{I}$. Consequently, the relationship \eqref{eq:ekplus-ek-relation} coincides with the result given in \cite[Lemma 3.1]{yang2022anderson}. Next, we provide an estimate for $\|\mathbf{r}_{k+1}\|$ based on the above result.
 
 \begin{theorem}\label{thm:one-step-gain}
 	Assume that the assumptions in Lemma \ref{lem:diag-ek-rk} hold. Then,
 	\begin{equation}\label{eq:one-AAm}
 		\|\mathbf{r}_{k+1}\|\leq C(a,b,m)\kappa_2(\mathbf{W})\|\mathbf{M}\|\|\mathbf{r}_k\|,
 	\end{equation}
 	or
 	\begin{equation}\label{eq:gain-AAm}
 		\|\mathbf{r}_{k+1}\|\leq C(a,b,m)\kappa_2(\mathbf{W})\|\mathbf{M}\|^{k+1}\|\mathbf{r}_0\|.
 	\end{equation}
 	where $\kappa_2(\mathbf{W})=\|\mathbf{W}\|\|\mathbf{W}^{-1}\|$,  and
 	\begin{equation*}
 		C(a,b,m)=\left|T_{m}\left(\frac{2ab-a-b}{b-a}\right)\right|^{-1},
 	\end{equation*}
 	where $T_{m}(x)$ is the Chebyshev polynomial of degree $m$.
 \end{theorem}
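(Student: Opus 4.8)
The plan is to start from the one-step identity furnished by Lemma~\ref{lem:diag-ek-rk}, namely $\mathbf{r}_{k+1}=\mathbf{M}\mathbf{R}\mathbf{r}_k$, so that $\|\mathbf{r}_{k+1}\|\le\|\mathbf{M}\|\,\|\mathbf{R}\mathbf{r}_k\|$. All the work then goes into estimating $\|\mathbf{R}\mathbf{r}_k\|$. Since $\mathbf{R}\mathbf{r}_k=\mathbf{M}^{-1}\mathbf{r}_{k+1}$ and, as recalled above, this equals the minimal value of the Anderson least-squares functional, I would recast that minimum as a scalar polynomial approximation problem and control it by a Chebyshev extremal argument.

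First I would convert the least-squares problem into polynomials. Because $\{\mathbf{x}_j=q(\mathbf{x}_{j-1})\}_{j=1}^{k}$, the errors obey $\mathbf{e}_j=\mathbf{M}^j\mathbf{e}_0$, hence $\mathbf{r}_j=(\mathbf{M}-\mathbf{I})\mathbf{M}^j\mathbf{e}_0$. Substituting these into the residual combination $r(\mathbf{x}_k)+\sum_{i=1}^{m}\gamma_i\big(r(\mathbf{x}_k)-r(\mathbf{x}_{k-i})\big)$ and factoring out $\mathbf{M}^{k-m}$ shows it equals $\mathbf{M}^{-m}\psi(\mathbf{M})\mathbf{r}_k$, where $\psi(x)=x^{m}+\sum_i\gamma_i(x^{m}-x^{m-i})$ is an arbitrary polynomial of degree $\le m$ constrained only by $\psi(1)=1$. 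The crucial manipulation is the reciprocal rewriting $\mathbf{M}^{-m}\psi(\mathbf{M})=\eta(\mathbf{M}^{-1})$ with $\eta(y)=y^{m}\psi(1/y)$ the coefficient-reversed polynomial; this map is a bijection on polynomials of degree $\le m$ that preserves the value at $1$, so $\eta$ ranges over \emph{all} degree-$\le m$ polynomials with $\eta(1)=1$. Consequently
\[
\|\mathbf{R}\mathbf{r}_k\|=\min_{\deg\eta\le m,\ \eta(1)=1}\big\|\eta(\mathbf{M}^{-1})\mathbf{r}_k\big\|.
\]

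Next I would diagonalize and pass to the scalar problem. Writing $\eta(\mathbf{M}^{-1})=\mathbf{W}\eta(\mathbf{\Lambda}^{-1})\mathbf{W}^{-1}$ and using submultiplicativity yields $\|\eta(\mathbf{M}^{-1})\mathbf{r}_k\|\le\kappa_2(\mathbf{W})\,\max_i|\eta(1/\lambda_i)|\,\|\mathbf{r}_k\|$. Since the eigenvalues $\lambda_i$ of $\mathbf{M}$ lie in $[a,b]$, their reciprocals lie in $[1/b,1/a]$, an interval that excludes $1$ precisely because $[a,b]$ excludes $0$ and $1$. Bounding $\max_i|\eta(1/\lambda_i)|\le\max_{y\in[1/b,1/a]}|\eta(y)|$ and invoking the classical Chebyshev extremal result — the minimizer over degree-$\le m$ polynomials normalized by $\eta(1)=1$ is the shifted Chebyshev polynomial, with optimal value $1/\big|T_m\big(\frac{2-1/b-1/a}{1/a-1/b}\big)\big|$ — I would clear denominators to recognize the argument as $\frac{2ab-a-b}{b-a}$, i.e. exactly $C(a,b,m)$. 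This gives $\|\mathbf{R}\mathbf{r}_k\|\le C(a,b,m)\kappa_2(\mathbf{W})\|\mathbf{r}_k\|$, and multiplying by $\|\mathbf{M}\|$ produces \eqref{eq:one-AAm}. The bound \eqref{eq:gain-AAm} then follows at once by inserting $\|\mathbf{r}_k\|=\|\mathbf{M}^k\mathbf{r}_0\|\le\|\mathbf{M}\|^{k}\|\mathbf{r}_0\|$, using again $\mathbf{r}_j=\mathbf{M}^j\mathbf{r}_0$ along the fixed-point phase.

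The step I expect to require the most care is the reciprocal passage $\mathbf{M}^{-m}\psi(\mathbf{M})=\eta(\mathbf{M}^{-1})$ together with the bookkeeping that $\eta$ is normalized at $1$ over the reflected interval $[1/b,1/a]$; this is exactly what produces the factor $2ab$ rather than $2$ in the Chebyshev argument. Bounding $\psi(\mathbf{M})$ directly on $[a,b]$ would instead introduce spurious negative powers $\lambda_i^{-m}$ that blow up whenever $|\lambda_i|<1$, so rerouting through $\mathbf{M}^{-1}$ is what keeps the constant independent of $k$. I would also make explicit that $[1/b,1/a]$ avoids $1$, so that $T_m$ is evaluated strictly outside $[-1,1]$ and the extremal value is nonzero; this is precisely where the hypothesis that $[a,b]$ contains neither $0$ nor $1$ is used.
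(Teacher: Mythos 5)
Your argument is correct and follows essentially the same route as the paper's proof: both reduce the claim to bounding $\|\mathbf{R}\mathbf{r}_k\|$, recast it as a polynomial minimization normalized at $1$, pass to the reciprocal variable so the problem lives on $[1/b,1/a]$, and invoke the Chebyshev extremal result to obtain $C(a,b,m)$, with \eqref{eq:gain-AAm} following from $\mathbf{r}_k=\mathbf{M}^k\mathbf{r}_0$. The only difference is bookkeeping: the paper reaches the polynomial problem through the explicit Krylov matrix $\mathbf{K}$ and the projection form of $\mathbf{R}$ from Lemma \ref{lem:diag-ek-rk} and then substitutes $t\mapsto 1/t$, whereas you work directly from the least-squares functional and perform the coefficient reversal $\eta(y)=y^m\psi(1/y)$ at the matrix level.
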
 
 \begin{proof}
 	From Lemma \ref{lem:diag-ek-rk}, we have $\|\mathbf{r}_{k+1}\|\leq \|\mathbf{M}\|\|\mathbf{R}\mathbf{r}_k\|$. We then estimate $\|\mathbf{R}\mathbf{r}_k\|$. Using the definition of $\mathbf{R}$ in \eqref{eq:def-R-lem1}, we have	
 	\begin{align*}
 		\|\mathbf{R}\mathbf{r}_k\|&=\|(\mathbf{I}-\mathbf{WK}(\mathbf{K}^T\mathbf{W}^T\mathbf{WK})^{-1}\mathbf{K}^T\mathbf{W}^T)\mathbf{r}_k\|\\
 		&=\min_{\mathbf{c}\in \mathbb{R}^{m_k}}\|\mathbf{r}_k-\mathbf{WK}\mathbf{c}\|\\
 		&=\min_{p_{\ell}\in \mathbb{P}_{m_k-1}}\| \mathbf{r}_k-p_{\ell}(\mathbf{M})\mathbf{WH}\mathbf{W}^{-1}\mathbf{e}_0\|.
 	\end{align*} 
 	However, for $1\leq j\leq k, \mathbf{x}_j=q(\mathbf{x}_{j-1})=\mathbf{M}\mathbf{x}_{j-1}+b$. It follows that $\mathbf{e}_j=\mathbf{M}\mathbf{x}_{j-1}+\mathbf{b}-\mathbf{x}^*=\mathbf{M}\mathbf{e}_{j-1}=\mathbf{M}^j\mathbf{e}_{0}$. Then,
 	$\mathbf{W}^{-1}\mathbf{M}\mathbf{e}_k=\mathbf{W}^{-1}\mathbf{M}^{k+1}\mathbf{e}_0=\mathbf{\Lambda}^{k+1}\mathbf{W}^{-1}\mathbf{e}_0$. Thus,
 	\begin{align*}
 		\mathbf{WH}\mathbf{W}^{-1}\mathbf{e}_0&= \mathbf{W}(\mathbf{\Lambda}-\mathbf{I})^2\mathbf{\Lambda}^{k-m_k}\mathbf{W}^{-1}\mathbf{e}_0\\
 		&=\mathbf{W}(\mathbf{I}-\mathbf{\Lambda})\mathbf{\Lambda}^{-m_k-1}(\mathbf{I}-\mathbf{\Lambda})\mathbf{\Lambda}^{k+1}\mathbf{W}^{-1}\mathbf{e}_0\\
 		&= \mathbf{W}(\mathbf{I}-\mathbf{\Lambda})\mathbf{\Lambda}^{-m_k-1}(\mathbf{I}-\mathbf{\Lambda}) \mathbf{W}^{-1}\mathbf{M}\mathbf{e}_k\\
 		&= \mathbf{W}(\mathbf{I}-\mathbf{\Lambda})\mathbf{\Lambda}^{-m_k-1}\mathbf{W}^{-1}\mathbf{W}(\mathbf{I}-\mathbf{\Lambda}) \mathbf{W}^{-1}\mathbf{M}\mathbf{e}_k\\
 		&= -\mathbf{W}(\mathbf{I}-\mathbf{\Lambda})\mathbf{\Lambda}^{-m_k-1}\mathbf{W}^{-1}\mathbf{M}\mathbf{r}_k\\
 		&= -\mathbf{W}(\mathbf{I}-\mathbf{\Lambda})\mathbf{\Lambda}^{-m_k}\mathbf{W}^{-1}\mathbf{r}_k,
 	\end{align*}  
 where in the second-to-last equality we use the relation $\mathbf{r}_k=-\mathbf{A}\mathbf{e}_k=-(\mathbf{I-M})\mathbf{e}_k$.
 	
 	We can further simplify  
 	\begin{align*}	        
 		\|\mathbf{R}\mathbf{r}_k\| &=\min_{p_{\ell}\in \mathbb{P}_{m_k-1}}\left\| \left(\mathbf{I}-p_{\ell}(\mathbf{W\Lambda}\mathbf{W}^{-1})\mathbf{W}(\mathbf{I}-\mathbf{\Lambda})\mathbf{\Lambda}^{-m_k}\mathbf{W}^{-1}\right)\mathbf{r}_k\right\|\\
 		&\leq \|\mathbf{W}\|\|\mathbf{W}^{-1}\|\min_{p_{\ell}\in \mathbb{P}_{m_k-1}}\left\| \left(\mathbf{I}-p_s(\mathbf{\Lambda}) (\mathbf{I}-\mathbf{\Lambda})\mathbf{\Lambda}^{-m_k}\right)\right\|\|\mathbf{r}_k\|\\
 		& \leq \kappa_2(\mathbf{W})\min_{p_{\ell}\in P_{m_k-1}} \max_{t\in [a,b]}|1-p_{\ell}(t) t^{-m_k}(1-t)| \|\mathbf{r}_k\|\\
 		& \leq \kappa_2(\mathbf{W})\min_{p_{\ell} \in P_{m_k-1}} \max_{t\in [1/b,1/a]}|1-p_{\ell}(1/t) t^{m_k-1}(t-1)| \|\mathbf{r}_k\|\\
 		& \leq \kappa_2(\mathbf{W})\min_{p_{\ell} \in P_{m_k-1}} \max_{t\in [1/b,1/a]}|1-p_{\ell}(t)(t-1)| \|\mathbf{r}_k\|\\
 		& \leq \kappa_2(\mathbf{W})\min_{q\in P_{m_k}, q(1)=1} \max_{t\in [1/b,1/a]}|q(t)| \|\mathbf{r}_k\|\\
 		&\leq \kappa_2(\mathbf{W}) C(a,b,m_k)\| \mathbf{r}_k\|,
 	\end{align*}
 	where the last inequality can be found in the proof of \cite[Theorem 3.1]{yang2022anderson}. Because $m\leq k$, $m_k=\min\{m,k\}=m$ and we have the desired result \eqref{eq:one-AAm}. Recall that $\mathbf{x}_k$ is obtained by applying $k$ times fixed-point iteration to $\mathbf{x}_0$. Thus, $\mathbf{r}_k=\mathbf{M}^k\mathbf{r}_0$. It follows that \eqref{eq:gain-AAm} holds.
 \end{proof} 
 If $\|\mathbf{M}\|\leq 1$, the term $C(a,b,m)\kappa_2(\mathbf{W})$ in \eqref{eq:one-AAm} is the gain obtained by one step AA($m$) compared with applying $(k+1)$ fixed-point iterations to $\mathbf{x}_0$. If $\|\mathbf{M}\|>1$, the term $C(a,b,m)\kappa_2(\mathbf{W})\|\mathbf{M}\|^{k+1}$ in \eqref{eq:gain-AAm} could be less than one, which will help us to build the convergence for the subsequence $\{\mathbf{r}_{jp}\}$ obtained from aAA($m$)[$s$]--FP[$t$]. We estimate for $ \|\mathbf{r}_{jp}\|$ in the following.
 
 \begin{theorem}\label{thm:aAA-convergence}
 	Let $\mathbf{M}$ be an $n\times n$  diagonalizable matrix with eigenvalue decomposition $\mathbf{M}= \mathbf{W\Lambda}\mathbf{W}^{-1}$. Suppose that the eigenvalues of $\mathbf{M}$ are contained in an interval $[a, b]$ that does not contain 0 or 1.  Consider aAA($m$)[$s$]--FP[$t$] with $m\leq t$ in Algorithm \ref{saAAm_FPn}. Let $p=t+s$.  Then, we have 
 	\begin{equation}\label{eq:aAAsFPt}
 		\|\mathbf{r}_{jp}\|\leq   (C(a,b,m)\kappa_2(\mathbf{W}))^j\|\mathbf{M}\|^{ jp}\|\mathbf{r}_0\|.
 	\end{equation}
 \end{theorem}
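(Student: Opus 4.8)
The plan is to prove \eqref{eq:aAAsFPt} by induction on the period count $j$, reducing everything to a single per-period estimate $\|\mathbf{r}_{jp}\|\le C(a,b,m)\kappa_2(\mathbf{W})\|\mathbf{M}\|^{p}\,\|\mathbf{r}_{(j-1)p}\|$; iterating this and using $\mathbf{r}_{0p}=\mathbf{r}_0$ immediately yields the claim. A single period consists of $t$ fixed-point steps followed by $s$ Anderson steps, so I would split the $p=t+s$ updates into three groups and bound each separately: the $t$ fixed-point steps, the \emph{first} AA step of the period, and the remaining $s-1$ AA steps. The target factor $C(a,b,m)\kappa_2(\mathbf{W})\|\mathbf{M}\|^{p}$ will come out as $\|\mathbf{M}\|^{t}$ from the fixed-point block, $C(a,b,m)\kappa_2(\mathbf{W})\|\mathbf{M}\|$ from the first AA step, and $\|\mathbf{M}\|^{s-1}$ from the trailing AA steps.

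The enabling observation, which I would establish first, is that \emph{every} AA step is a contraction by $\|\mathbf{M}\|$ in the $2$-norm, irrespective of what precedes it. Starting from $\mathbf{r}_{k+1}=(\mathbf{I}-\mathbf{A}(\mathbf{I}-\mathbf{S}_k))\mathbf{r}_k$ in \eqref{eq:ekrk} and using $\mathbf{D}_k=-\mathbf{A}\mathbf{X}_k$ together with the definition of $\mathbf{S}_k$, a short manipulation gives $\mathbf{I}-\mathbf{A}(\mathbf{I}-\mathbf{S}_k)=\mathbf{M}\bigl(\mathbf{I}-\mathbf{P}_k\bigr)$, where $\mathbf{P}_k=\mathbf{D}_k(\mathbf{D}_k^T\mathbf{D}_k)^{-1}\mathbf{D}_k^T$ is the orthogonal projector onto the range of $\mathbf{D}_k$. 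Since $\|\mathbf{I}-\mathbf{P}_k\|\le 1$, this yields $\|\mathbf{r}_{k+1}\|\le\|\mathbf{M}\|\,\|\mathbf{r}_k\|$ for any AA step; the same bound holds trivially for a fixed-point step, where $\mathbf{r}_{k+1}=\mathbf{M}\mathbf{r}_k$. This is exactly what controls the $t$ fixed-point steps and the $s-1$ trailing AA steps of each period, contributing the factors $\|\mathbf{M}\|^{t}$ and $\|\mathbf{M}\|^{s-1}$.

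For the sharp factor I would invoke Theorem \ref{thm:one-step-gain} on the first AA step of the period. The key point is the hypothesis $m\le t$: the $m$ most recent iterate and residual differences feeding this AA step are produced entirely by the $t\ge m$ fixed-point steps that immediately precede it, so $\mathbf{X}_k$ and $\mathbf{D}_k$ have exactly the Krylov structure exploited in Lemma \ref{lem:diag-ek-rk}. Because that structure, and hence the one-step estimate \eqref{eq:one-AAm}, depends only on the spectrum of $\mathbf{M}$ and on the window being a run of consecutive fixed-point iterates, it is invariant under relabeling the iteration index; thus the iterate $\mathbf{r}_{(j-1)p}$ at the start of period $j$ plays the role of $\mathbf{x}_0$ in Lemma \ref{lem:diag-ek-rk}, and \eqref{eq:one-AAm} gives $\|\mathbf{r}_{k+1}\|\le C(a,b,m)\kappa_2(\mathbf{W})\|\mathbf{M}\|\,\|\mathbf{r}_k\|$ for this step. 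Multiplying the three groups of factors produces the per-period contraction $C(a,b,m)\kappa_2(\mathbf{W})\|\mathbf{M}\|^{p}$, and the induction closes.

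I expect the main obstacle to be the justification in the previous paragraph: verifying that Theorem \ref{thm:one-step-gain}, stated for a pure fixed-point run launched from the global initial guess $\mathbf{x}_0$, applies verbatim to the first AA step of every subsequent period. This requires checking that $m\le t$ really guarantees an uncontaminated window, so that no AA increment leaks into the last $m$ differences, and that the Chebyshev/polynomial bound in the proof of Theorem \ref{thm:one-step-gain} is genuinely translation-invariant. By contrast, the later AA steps in a period, whose windows \emph{are} contaminated by AA increments, cannot be handled by Theorem \ref{thm:one-step-gain}; it is precisely the projector identity $\mathbf{r}_{k+1}=\mathbf{M}(\mathbf{I}-\mathbf{P}_k)\mathbf{r}_k$ that rescues them, and isolating this identity is the one new ingredient needed beyond the earlier results.
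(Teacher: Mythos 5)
Your proposal is correct and follows essentially the same route as the paper: a per-period contraction obtained by combining the $\|\mathbf{M}\|^{t}$ factor from the fixed-point block, the Chebyshev bound of Theorem \ref{thm:one-step-gain} applied to the first AA step of the period (valid since $m\le t$ keeps the window inside the fixed-point run), and a $\|\mathbf{M}\|$ bound for each of the remaining $s-1$ AA steps. The only difference is that you derive that last bound self-containedly via the projector identity $\mathbf{I}-\mathbf{A}(\mathbf{I}-\mathbf{S}_k)=\mathbf{M}(\mathbf{I}-\mathbf{P}_k)$, whereas the paper cites \cite[Theorem 2.1]{toth2015convergence} for the same fact.
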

 \begin{proof}
 	Note that the $s$ iterates $\{\mathbf{x}_{jp-d}\}_{d=0}^{s-1}$ of aAA($m$)[$s$]--FP[$t$] are obtained by AA steps.  
 	From \cite[Theorem 2.1]{toth2015convergence}, we have
 	\begin{equation}\label{eq:aAAms}
 		\|\mathbf{r}_{jp}\|\leq \|\mathbf{M}\|\|\mathbf{r}_{jp-1}\|\leq \cdots\leq \|\mathbf{M}\|^{s-1}\|\mathbf{r}_{jp-s+1}\|.
 	\end{equation}
 Note that the $t$ iterates $\{\mathbf{x}_{jp-d}\}_{d=s}^{p-1}$ of aAA($m$)[$s$]--FP[$t$] are generated by the fixed-point iterations, while $\mathbf{x}_{jp-s+1}$ is produced by the AA($m$) step, which uses the previous $m+1$ initial guesses. Since $m\leq t$ corresponding to the condition $m\leq k$ in Lemma \ref{lem:diag-ek-rk} required by Theorem \ref{thm:one-step-gain},   we can apply Theorem \ref{thm:one-step-gain} to the $(jp-s+1)$th iteration. Then, we have 
 	\begin{equation*}
 		\|\mathbf{r}_{jp-s+1}\|\leq  C(a,b,m) \kappa_2(\mathbf{W})\|\mathbf{M}\|\|\mathbf{r}_{jp-s}\|\leq  C(a,b,m) \kappa_2(\mathbf{W})\|\mathbf{M}\|^{p-s+1}\|\mathbf{r}_{(j-1)p}\|,
 	\end{equation*}
 where the first inequality follows from \eqref{eq:one-AAm}, and the second inequality uses the standard property of the error estimation for the fixed-point iterations.
 	
 	Combing the above two results, we obtain
 	\begin{equation}\label{eq:rjp-rjm1p}
 		\|\mathbf{r}_{jp}\| \leq  C(a,b,m)\kappa_2(\mathbf{W})\|\mathbf{M}\|^p\|\mathbf{r}_{(j-1)p}\|
 		\leq ( C(a,b,m)\kappa_2(\mathbf{W}))^j\|\mathbf{M}\|^{jp}\|\mathbf{r}_0\|,
 	\end{equation}
 	which is the desired result.
 \end{proof} 
When $m>t$, the results in Theorem 3 no longer apply. Consequently, we cannot derive the results in Theorem 4. In fact, when $m\leq t$, the $\mathbf{x}_{jp-s+1}$ is a linear combination of $m+1$ fixed-point iterates. While, when $m>t$, the $\mathbf{x}_{jp-s+1}$ is a linear combination of $t$ fixed-point iterates and previous $m+1-t$ iterates generated by AA($m$), so there is no pattern available to estimate $\mathbf{x}_{jp-s+1}$. 
 
 We remark that, in the above theorem, we require $m\leq t$. However, in practice, one can take $m>t$, which we will consider in our numerical tests.  Theorem \ref{thm:aAA-convergence} indicates that the term $(C(a,b,m)\kappa_2(\mathbf{W}))^j$ is the gain obtained by aAA($m$)[$s$]--FP[$t$]  compared with conducting $(jp)$ fixed-point iterations to $\mathbf{x}_0$. If $\|\mathbf{M}\|>1$, the fixed-point iteration is divergent. However, there is a chance that the subsequence $\{\mathbf{r}_{jp}\}$ of aAA($m$)[$s$]--FP[$t$] can converge. In fact, if $C(a,b,m)\kappa_2(\mathbf{W})\|\mathbf{M}\|^p<1$,  the subsequence $\{\mathbf{x}_{jp}\}$ generated by aAA($m$)[$s$]--FP[$t$] is convergent. As mentioned in \cite{yang2022anderson}, $C(a,b,m)$ is a monotonically decreasing function of $m$ for fixed $a$ and $b$, and decays exponentially to zero as $m$ goes to $\infty$.  We note that the estimation \eqref{eq:aAAsFPt} is more accurate for $s=1$, since \eqref{eq:aAAms} is not sharp. In the following, we first estimate $C(a,b,m)$ and then give a sufficient condition such that the subsequence $\{\mathbf{r}_{jp}\}$ of aAA($m$)[$1$]--FP[$t$] converges.

 \begin{lemma}\label{lem:upbd-C}
 	Let $a<b, \alpha=\frac{1}{b}$ and $\beta=\frac{1}{a}$. For $[\alpha,\beta]$ which does not include $0$ and 1, we have 
 	\begin{equation*}
 		\min_{p_{\ell}\in P_k, p_{\ell}(1)=1} \max_{t\in [\alpha,\beta]}|p_{\ell}(t)| =\frac{1}{|T_k(1+2\frac{1-\beta}{\beta-\alpha})|}\leq \epsilon(a,b,k),
 	\end{equation*}
 	where
 	\begin{equation*}
 		\epsilon(a,b,k)=
 		\begin{cases}
 			2\left(\frac{\sqrt{\frac{b(1-a)}{a(1-b)}}-1}{\sqrt{\frac{b(1-a)}{a(1-b)}}+1}\right)^k, & if \quad b<1\\
 			2\left(\frac{\sqrt{\frac{a(1-b)}{b(1-a)}}-1}{\sqrt{\frac{a(1-b)}{b(1-a)}}+1}\right)^k, & if \quad a>1.
 		\end{cases}
 	\end{equation*}
 \end{lemma}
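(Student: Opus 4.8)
The plan is to separate the claim into its two parts: the exact identity equating the constrained minimax value with a reciprocal Chebyshev value, and the explicit exponential bound $\epsilon(a,b,k)$. The first part is the genuine content and the second is essentially algebra built on the standard behaviour of $T_k$ outside $[-1,1]$.

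First I would establish the equality. Introduce the affine map $\phi(t)=\frac{2t-\alpha-\beta}{\beta-\alpha}$, which sends $[\alpha,\beta]$ bijectively onto $[-1,1]$ and sends $t=1$ to $z:=\phi(1)=1+2\frac{1-\beta}{\beta-\alpha}=\frac{2-\alpha-\beta}{\beta-\alpha}$. Because $[\alpha,\beta]$ contains neither $0$ nor $1$, the point $1$ lies strictly outside $[\alpha,\beta]$, so $|z|>1$ and in particular $T_k(z)\neq 0$. Writing $p(t)=\tilde p(\phi(t))$ with $\deg\tilde p\le k$, the side condition $p(1)=1$ becomes $\tilde p(z)=1$ and the objective becomes $\max_{x\in[-1,1]}|\tilde p(x)|$. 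The classical Chebyshev extremal property says that among degree-$k$ polynomials normalized to value $1$ at an exterior point $z$, the sup-norm on $[-1,1]$ is minimized by $\tilde p^*(x)=T_k(x)/T_k(z)$, with minimum value $1/|T_k(z)|$; I would either invoke this directly or prove it by the standard equioscillation/contradiction argument (a strictly better competitor $\tilde q$ would force $T_k/T_k(z)-\tilde q$ to change sign more than $k$ times on $[-1,1]$). This gives the stated identity $\min_{p\in P_k,\,p(1)=1}\max_{t\in[\alpha,\beta]}|p(t)|=1/|T_k(z)|$.

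Next I would derive the upper bound. For $y>1$ the Joukowski representation gives $T_k(y)=\cosh(k\,\mathrm{arccosh}\,y)\ge \tfrac12(y+\sqrt{y^2-1})^k$, and since $T_k$ is even or odd one has $|T_k(z)|=T_k(|z|)$; hence $1/|T_k(z)|\le 2(|z|+\sqrt{z^2-1})^{-k}=2(|z|-\sqrt{z^2-1})^k$, using $(|z|+\sqrt{z^2-1})(|z|-\sqrt{z^2-1})=1$. It then remains to evaluate $|z|-\sqrt{z^2-1}$ in closed form. From $z-1=\frac{2(1-\beta)}{\beta-\alpha}$ and $z+1=\frac{2(1-\alpha)}{\beta-\alpha}$ I get $z^2-1=\frac{4(1-\alpha)(1-\beta)}{(\beta-\alpha)^2}$. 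Setting $u=|1-\alpha|$ and $v=|1-\beta|$, in each regime $(1-\alpha)$ and $(1-\beta)$ share a common sign, so $|z|=\frac{u+v}{\beta-\alpha}$ and $\sqrt{z^2-1}=\frac{2\sqrt{uv}}{\beta-\alpha}$; the numerator of $|z|-\sqrt{z^2-1}$ collapses to the perfect square $(\sqrt u-\sqrt v)^2$ while $\beta-\alpha=|u-v|=|\sqrt u-\sqrt v|(\sqrt u+\sqrt v)$, so after cancellation $|z|-\sqrt{z^2-1}=\frac{\sqrt{\max(u,v)}-\sqrt{\min(u,v)}}{\sqrt{\max(u,v)}+\sqrt{\min(u,v)}}$.

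Finally I would substitute $\alpha=1/b$, $\beta=1/a$ and split into the two sign regimes. When $b<1$ both $\alpha,\beta>1$ (so $z<-1$) and $u=(1-b)/b$, $v=(1-a)/a$ with $v>u$, giving $v/u=\frac{b(1-a)}{a(1-b)}$; when $a>1$ both $\alpha,\beta<1$ (so $z>1$) and $u=(b-1)/b$, $v=(a-1)/a$ with $u>v$, giving $u/v=\frac{a(1-b)}{b(1-a)}$. Dividing the numerator and denominator of the ratio above by the smaller square root reproduces exactly the two branches of $\epsilon(a,b,k)$, completing the estimate $C(a,b,k)=1/|T_k(z)|\le\epsilon(a,b,k)$. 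I expect the \emph{main obstacle} to be the equality step rather than the algebra: everything after it is bookkeeping, but pinning down $1/|T_k(z)|$ as the true minimum requires the Chebyshev optimality argument for a value constraint at an exterior point, together with the careful verification that $1\notin[\alpha,\beta]$ forces $|z|>1$ (so $T_k(z)\neq0$ and the normalization is legitimate) and that the sign of $z$ is tracked so that $|T_k(z)|=T_k(|z|)$ and the two square-root ratios land in the correct branch.
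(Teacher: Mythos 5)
Your proposal is correct and, for the actual content of the paper's proof (the bound $1/|T_k(z)|\le\epsilon(a,b,k)$), it follows essentially the same route: both use $T_k(y)\ge\tfrac12\bigl(y+\sqrt{y^2-1}\bigr)^k$ for $y>1$ and then rewrite $y-\sqrt{y^2-1}$ as the ratio $\bigl(\sqrt{\cdot}-1\bigr)/\bigl(\sqrt{\cdot}+1\bigr)$, with your $u,v$ bookkeeping matching the paper's substitution $\eta=\frac{\alpha-1}{\beta-\alpha}$ (resp.\ $\eta=\frac{1-\beta}{\beta-\alpha}$) and the identity $1+2\eta+2\sqrt{\eta(1+\eta)}=(\sqrt{\eta}+\sqrt{1+\eta})^2$. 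The only difference is that you also sketch the equioscillation proof of the exact minimax identity, which the paper does not prove but implicitly takes as the classical Chebyshev result (it is cited via the proof of Theorem~3.1 of the reference used in Theorem~\ref{thm:one-step-gain}); that addition is sound but not needed to match the paper.
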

 \begin{proof}
 	Note that $ -(1+2\frac{1-\beta}{\beta-\alpha})= 1+2\frac{\alpha-1}{\beta-\alpha}$. When $\alpha>1 (i.e., b<1)$, $1+2\frac{\alpha-1}{\beta-\alpha}>1$. Recall that for $|t|>1$
 	\begin{equation*}
 		T_k(t)=\frac{1}{2}\left( (t+\sqrt{t^2-1})^k+ (t+\sqrt{t^2-1})^{-k}\right)\geq\frac{1}{2} \left(t+\sqrt{t^2-1}\right)^k. 
 	\end{equation*}  
 	Let $\eta=\frac{\alpha-1}{\beta-\alpha}$. Thus,
 	\begin{equation*}
 		T_k(1+2\frac{\alpha-1}{\beta-\alpha})\geq \frac{1}{2}(1+2\eta + \sqrt{(1+2\eta)^2-1})^k=\frac{1}{2}\left(1+2\eta +2 \sqrt{\eta(1+\eta)}\right)^k. 	 
 	\end{equation*}
 	It can be shown that 
 	\begin{equation*}
 		1+2\eta +2 \sqrt{\eta(1+\eta)}= (\sqrt{\eta}+ \sqrt{1+\eta})^2=
 		\left(\sqrt{\frac{\alpha-1}{\beta-\alpha}}+ \sqrt{\frac{\beta-1}{\beta-\alpha}}\right)^2 =\frac{\sqrt{\frac{\beta-1}{\alpha-1}}+1}{\sqrt{\frac{\beta-1}{\alpha-1}}-1}.
 	\end{equation*}
 	Thus, 
 	\begin{equation*}
 		\frac{1}{T_k(1+2\frac{\alpha-1}{\beta-\alpha})}\leq 2\left(\frac{\sqrt{\frac{\beta-1}{\alpha-1}}-1}{\sqrt{\frac{\beta-1}{\alpha-1}}+1}\right)^k.
 	\end{equation*}
 	Substituting $\alpha =\frac{1}{b}$ and $\beta=\frac{1}{a}$ into the above inequality, we can obtain the desired result for $b<1$. 
 	When $\beta<1 (i.e., a>1)$, $\left|T_k(1+2\frac{1-\beta}{\beta-\alpha})\right|=T_k(1+2\frac{1-\beta}{\beta-\alpha})$. We take $\eta=\frac{1-\beta}{\beta-\alpha}$ in the above estimate, which will give the desired result.
 \end{proof}
 
 As discussed before, when $\|\mathbf{M}\|<1$, aAA($m$)[$s$]--FP[$t$] is convergent. Now, we explore the case where $\|\mathbf{M}\|>1$. Following Theorem \ref{thm:aAA-convergence} and Lemma \ref{lem:upbd-C}, we have the following sufficient convergence condition for aAA($m$)[$s$]--FP[$t$]. 
 \begin{corollary}\label{corol-noncontractive}
 	Consider aAA($m$)[$s$]--FP[$t$] with $m\leq t$. Assume that $\mathbf{M}$ is diagonalizable and all its eigenvalues are in the interval $[a, b]$, where $a>1$. If 
 	\begin{equation}\label{eq:suff-cond}
 		2\kappa_2(\mathbf{W})\left(\frac{\sqrt{\frac{a(1-b)}{b(1-a)}}-1}{\sqrt{\frac{a(1-b)}{b(1-a)}}+1}\right)^m \|\mathbf{M}\|^{t+s}<1,
 	\end{equation}
 	the subsequence $\{\mathbf{x}_{jp}\}$ generated by aAA($m$)[$s$]--FP[$t$] converges. 
 	Additionally, if $\mathbf{M}$ is symmetric and 
 		\begin{equation}\label{eq:suff-cond-sym}
 			\left(\frac{\sqrt{\frac{a(1-b)}{b(1-a)}}-1}{\sqrt{\frac{a(1-b)}{b(1-a)}}+1}\right)^m b^{t+s}<1,
 		\end{equation}
 		the subsequence $\{\mathbf{x}_{jp}\}$ generated by aAA($m$)[$s$]--FP[$t$] converges. 
 \end{corollary}
 	\begin{proof}
 		From the first inequality in \eqref{eq:rjp-rjm1p}, Lemma \ref{lem:upbd-C}, and the relation $p=t+s$, we derive the first result. If $\mathbf{M}$
 		is symmetric, then $\|\mathbf{M}\|=b$ and $\kappa_2(\mathbf{W})=1$, leading directly to the second result. 
 	\end{proof}
 Let us consider aAA($t$)[$1$]--FP[$t$] for symmetric $\mathbf{M}$.
 Then, \eqref{eq:suff-cond} is reduced to
 \begin{displaymath}\label{eq:suff-conds1m=t}
 	2\left(\frac{\sqrt{\frac{a(1-b)}{b(1-a)}}-1}{\sqrt{\frac{a(1-b)}{b(1-a)}}+1}\right)^t b^{t+1}<1,
 \end{displaymath}
 Table \ref{tab:Cheby-estimate} reports $C(a,b,m)b^{t+1}$ and $\epsilon(a,b,m)b^{t+1}$ with $t=m$ for different values of $a$ and $b$ to illustrate that even if the fixed-point iteration is divergent, aAA($m$)[$1$]--FP[$m$] may converge for large value of $m$. From Table \ref{tab:Cheby-estimate}, we see that $\epsilon(a,b,m)b^{t+1}$ is a sharp estimate of $C(a,b,m)b^{t+1}$. For $m=10$ and $15$, we see that aAA($m$)[$1$]--FP[$m$] converges. Increasing $m$ significantly improves the convergence rate when the underlying fixed-point iteration is convergent.
 
 \begin{table}[!ht]
 	\caption{The values of $C(a,b,m)b^{m+1}$ as well as $\epsilon(a,b,m)b^{m+1}$ (in bracket) for different values of $m$ and ($a,b$). The notation ``-" stands for values greater than one.}\label{tab:Cheby-estimate}
 	\begin{center}
 		\begin{tabular}{|c||c |c|c|c|c|}
 			\hline
 			$m$    & (0.3, 0.9)    &(1.5, 3)     &(2, 5)   &(10, 30)   &(20, 50)  \\
 			\hline
 			2     & 0.5134(0.6005)      &-   &-   &-   &-     \\
 			\hline
 			4   &0.1947(0.2003)   &0.4211(0.4211)   &-   &-      & -   \\
 			\hline
 			10     &0.0074(0.0074)    &0.0078(0.0078)   &0.0468(0.0468)    &0.1172(0.1172)      &0.0079(0.0079)     \\
 			\hline
 			15    & 0.0005(0.0005)       &0.0003(0.0003)    &0.0032(0.0032)    &0.0052(0.0052)    & 0.0001(0.0001)\\
 			\hline
 		\end{tabular}
 	\end{center}
 \end{table}
From the previous analysis of the linear case, we identify the crucial elements as the properties of the iteration matrix and the use of polynomial approximation techniques. These arguments, however, do not carry over directly to the nonlinear setting.
 
The main contributions  of Theorems \ref{thm:one-step-gain} and \ref{thm:aAA-convergence} establish general convergence results that do not require contractiveness, and Corollary \ref{corol-noncontractive} provides a new sufficient condition for convergence in the non‑contractive case.  We believe that these convergence results enrich the current understanding of AA‑type methods and offer guidance for their nonlinear extensions. When the iterates are sufficiently close to the exact solution, the nonlinear iteration can be locally characterized by its linearization, and analyzing this connection will be an important direction for our future work.
 
 \section{Numerical results}\label{sec:num}
 
 To illustrate the efficiency of the proposed aAA($m$)[$s$]--FP[$t$] method, we investigate a wide range of challenging problems, including linear cases with symmetric and nonsymmetric coefficient matrices, and nonlinear optimization problems. Although our analysis in the previous section focuses on $m\leq t$,  we remove this restriction for our numerical tests to show the flexibility of our algorithm. All tests below were run in MATLAB R2018b on an Ubuntu 18.04.1 LTS system with a 1.70 GHz Intel quad-core i5 processor and 8 GB RAM, with the CPU times reported in seconds. The computer implementation of the algorithms and numerical tests reported in this paper is freely available at \url{https://github.com/sleveque/aAA-FP}. Due to the complexity of the finite element implementation, the codes for the Poisson and the Navier--Stokes problems are available from the authors upon request.

 \subsection{Validation}
 In this subsection, we consider two examples to verify our theoretical result \eqref{eq:eqv-p-period}. In the first example, we consider the case of nonsymmetric $\mathbf{A}$; in the second, we study the case of symmetric $\mathbf{A}$. 
 
 In the following examples, rather than checking that the iterate \eqref{eq:eqv-p-period} holds for each $jp$ iteration, we consider the norm of the residual $\mathbf{r}_k$ obtained by aAA($\infty$)[1]--FP[$t$] and the norm of $\mathbf{M} \mathbf{r}_k^{G}$, with $\mathbf{r}_k^{G}$ being the residual obtained by GMRES, at each iteration $k$. In fact, from \eqref{eq:eqv-p-period} and $\mathbf{M}=\mathbf{I}-\mathbf{A}$ we can write $\mathbf{r}_{jp} = \mathbf{b} -\mathbf{A} \mathbf{x}_{jp} = \mathbf{b} - \mathbf{A} q(\mathbf{x}_{jp-1}^{G})=\mathbf{b}-\mathbf{A}(\mathbf{M}\mathbf{x}_{jp-1}^G+\mathbf{b})= \mathbf{M} (\mathbf{b} - \mathbf{A} \mathbf{x}_{jp-1}^{G})= \mathbf{M} \mathbf{r}_{jp-1}^{G}$. Thus, we expect a periodic alignment between the residual $\mathbf{r}_k$  and $\mathbf{M}\mathbf{r}_k^G$.

 \subsubsection{The nonsymmetric case}
 
 We first consider the case where $\mathbf{A}$ is nonsymmetric. We want to solve the system $\mathbf{A x} = \mathbf{b}$, where the matrix $\mathbf{A}\in \mathbb{R}^{n\times n}$ is the permutation  
 \begin{displaymath}
 	\mathbf{A} = \left[
 	\begin{array}{cccc}
 		0 & 0 & \ldots & 1  \\
 		1 & \ddots & & \vdots \\
 		0 & \ddots & & \vdots\\
 		0 & \ldots & 1 & 0
 	\end{array}
 	\right],
 \end{displaymath}
 and the vector $\mathbf{b}=[1,0,\ldots,0]^T$.  For this particular problem, GMRES stagnates with zero initial guess, but converges to the exact solution at the $n$th iteration. We compare GMRES with aAA($\infty$)[1]--FP[$t$] for $t=3$.  We use the vector of all ones as the initial guess $\mathbf{x}_0$. We opt for this choice so that GMRES does not stagnate.  In Figure \ref{fig:non-symmetric}, we plot the norm of $\mathbf{M}\mathbf{r}_k^{G}$ for $k=0,1, \cdots$  and the norm of the residual $\mathbf{r}_k$ starting from $k=0$ obtained by aAA($\infty$)[1]--FP[$3$] for $n=32$ (left panel) and $n=26$ (right panel). In the plots, we display $\|\mathbf{r}_k\|$ at position $k - 1$.

 \begin{figure}[H]
 	\centering \includegraphics[width=0.49\linewidth]{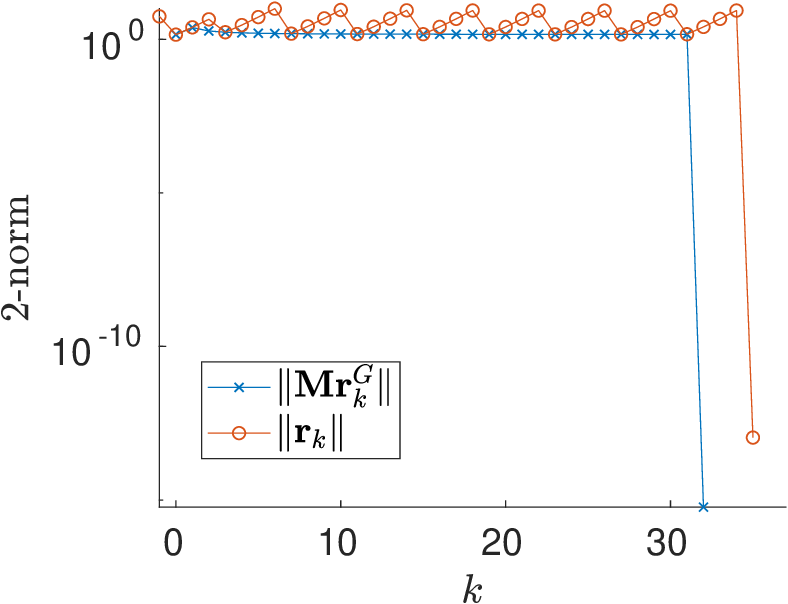}
 	\includegraphics[width=0.49\linewidth]{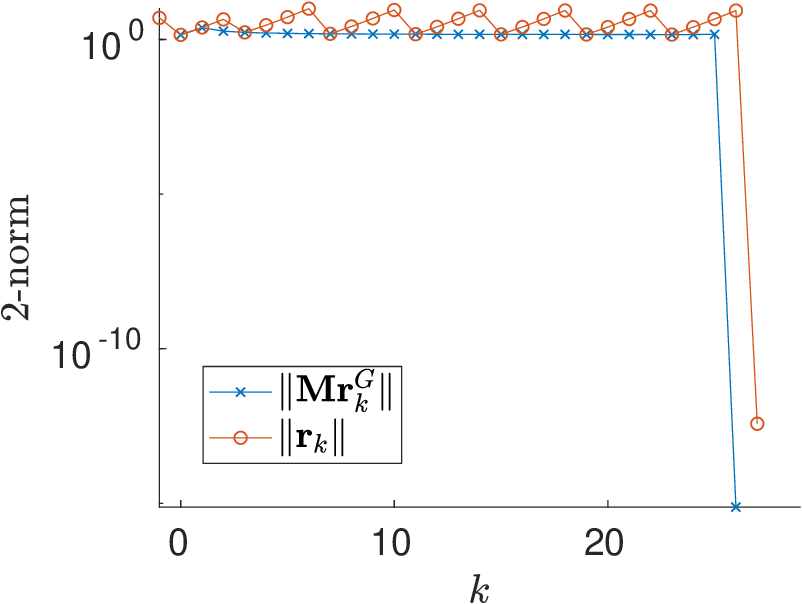}
 	\caption{Plots of the norms of $\mathbf{M}\mathbf{r}_k^{G}$ and $\mathbf{r}_k$ for the nonsymmetric case for  aAA($\infty$)[1]--FP[3]. Left: $n=32$. Right: $n=26$.}
 	\label{fig:non-symmetric}
 \end{figure}
 
 From the left panel of Figure \ref{fig:non-symmetric} (that is, $t=3$ and $p=t+1=4$), we observe  the predicted equivalence of the aAA($\infty$)[1]--FP[3] and the GMRES iterates, with period $p=4$ iteration, as described in \eqref{eq:eqv-p-period}. Note that the fixed-point iteration is divergent as $\|\mathbf{M}\|>1$; however, aAA($\infty$)[1]--FP[$3$] is still able to converge because one step AA gains more than three fixed-point iterations.  Since the result in \eqref{eq:eqv-p-period} only holds periodically with period $p$, we would like to understand what happens to aAA($\infty$)[1]--FP[$t$] when GMRES converges between two periods. For this reason, we set $n=26$ and compare GMRES with aAA($\infty$)[1]--FP[3] on the right panel of Figure \ref{fig:non-symmetric}. In this case, GMRES converges in $n=26$ iterations, but $n$ is not divisible by $p=4$.   As we can observe for $n=26$, aAA($\infty$)[1]--FP[3] converges in 28 iterations, even when GMRES converges between two periods. We would like to mention that, in Figure \ref{fig:non-symmetric}, the two iterative methods do not stagnate.

 \subsubsection{The symmetric case}
 We now consider the case where $\mathbf{A}$ is symmetric. We compare aAA($\infty$)[1]--FP[$t$] and GMRES in solving the following discretized  Poisson equation:
 \begin{displaymath}
 	\left\{
 	\begin{array}{cll}
 		- \Delta u &= &f(x_1,x_2)  \quad \mathrm{in} \; \Omega=[-1, 1]^2,\\
 		u &=& g(x_1,x_2)  \quad \mathrm{on} \; \partial \Omega.
 	\end{array}
 	\right.
 \end{displaymath}
 We set $f(x_1, x_2)=\frac{\pi^2}{2} \cos(\frac{\pi x_1}{2})\cos(\frac{\pi x_2}{2})$ and $g(x_1, x_2) = 1$, for which the analytical solution $u = \cos(\frac{\pi x_1}{2})\cos(\frac{\pi x_2}{2}) + 1$.
 
 We employ $Q_1$ finite elements as the discretization and construct a uniform mesh with 33 points in each direction. Upon discretization, we have to solve a system of the form $\mathbf{Au} = \mathbf{b}$, with $\mathbf{A}\in \mathbb{R}^{n \times n}$ symmetric positive definite. For this case, we have $n=31^2=961$ (the discretization does not take into account for the boundary nodes).  We run GMRES and aAA($\infty$)[1]--FP[$t$] for $t=0$ and $t=3$, using a reduction of $10^{-12}$ in the residual as the stopping criterion. Note that aAA($\infty$)[1]--FP[0] is AA($\infty$).  In Figure \ref{fig:poisson}, we plot the norm of $\mathbf{M}\mathbf{r}_k^{G}$, where $\mathbf{r}_k^{G}$ is the residual obtained by GMRES at the $k$th iteration, and the norm of the residual $\mathbf{r}_k$ obtained by aAA($\infty$)[1]--FP[$t$], with a zero initial guess for both methods. Again,   Figure \ref{fig:poisson} confirms \eqref{eq:eqv-p-period}.
 
 \begin{figure}[!htb]
 	\centering
 	\includegraphics[width=0.49\linewidth]{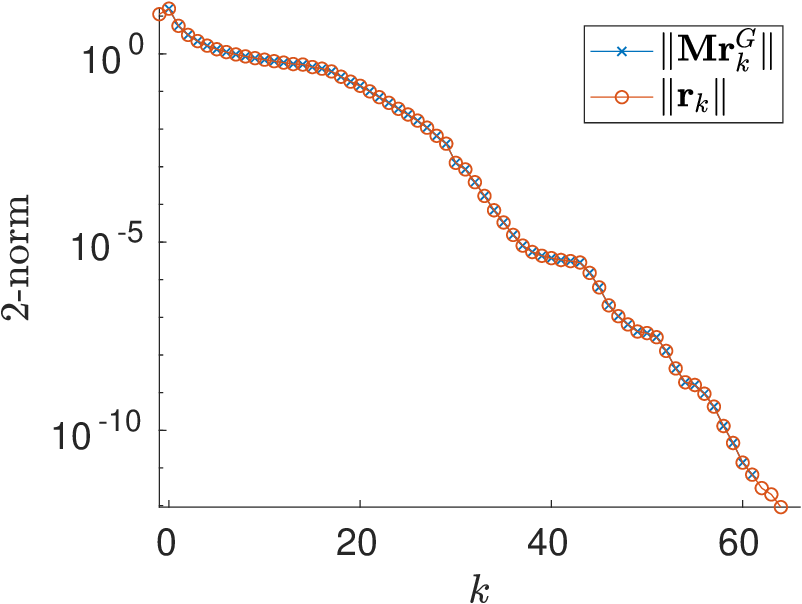}
 	\includegraphics[width=0.49\linewidth]{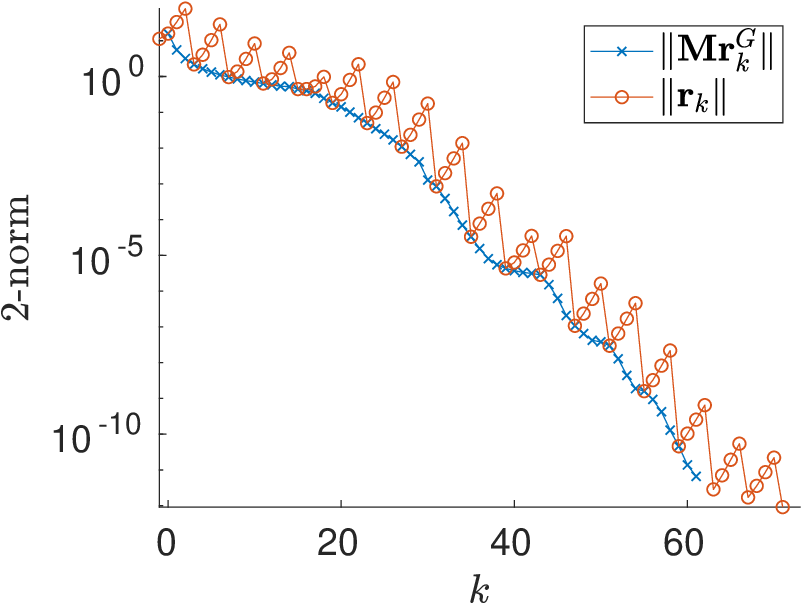}
 	\caption{Plots of the norms of $\mathbf{M} \mathbf{r}_k^{G}$ and $\mathbf{r}_k$ for the Poisson equation. Left: aAA($\infty$)[1]--FP[0]. Right: aAA($\infty$)[1]--FP[3].} 
 	\label{fig:poisson}
 \end{figure}

 \subsection{Application of aAA--FP to nonsymmetric cases}
 In this subsection, we apply aAA($m$)[$s$]--FP[$t$] to solve the nonsymmetric systems $\mathbf{A}\mathbf{x}=\mathbf{b}$ arising from finite elements discretizations. The problems we consider here are from the Matrix Market repository, which is available at \url{https://math.nist.gov/MatrixMarket/}. Specifically, we consider the problems: fidap008, fidap029, and fidapm37.  We apply AA($m$) and aAA($m$)[$s$]--FP[$t$] for different values of $m$, $s$, and $t$. We also consider standard GMRES as a solver. The fixed-point iteration for AA($m$) and aAA($m$)[$s$]--FP[$t$] is either weighted Jacobi with weight $\omega=0.5$ or Gauss--Seidel. For a fair comparison, we run GMRES for the solution of $\mathbf{P}^{-1}\mathbf{Ax} =\mathbf{P}^{-1}\mathbf{b}$, where $\mathbf{P}$ is either the diagonal of $\mathbf{A}$ for Jacobi, as done in \cite{pratapa2016anderson}, or its lower-triangular part. We allow a maximum of 5000 iterations and run the solvers until a reduction of $10^{-8}$ in the relative residual is achieved, that is, starting from an initial residual $\mathbf{r}_0$, we require that $\| \mathbf{r}_k \| / \| \mathbf{r}_0 \| < 10^{-8}$. For all the tests here, we use the vector of all ones as the initial guess. In Tables \ref{matrix_market1-100}, \ref{matrix_market2-60}, and \ref{matrix_market2-GS}, we report the number of iterations and the CPU times required for solving the problems considered here; further in Table \ref{matrix_market1-100} we report the dimension of each system to be solved together with the condition number of $\mathbf{A}$ and $\mathbf{P}^{-1}\mathbf{A}$ for Jacobi. We note that the Jacobi iteration, viewed as a fixed‑point method, is divergent in all cases, and that the Gauss–Seidel method, also viewed as a fixed‑point iteration, fails to meet the stopping criterion within 5000 iterations in every case.
 	
 	\begin{table}[!ht]
 		\caption{Degrees of freedom $n$, the condition number of $\mathbf{A}$ and of $\mathbf{P}^{-1}\mathbf{A}$, denoted by $\kappa$, number of iterations $\texttt{it}$, and CPU time for AA(100),  aAA(100)[10]--FP[5], and GMRES(100), when applying Jacobi using all ones as the initial guess. The notation $\dagger$ indicates that the solver did not achieve the desired stopping criterion within the 5000 iterations.}\label{matrix_market1-100}
 		\begin{center}
 			\resizebox{1\textwidth}{!}{		
 				\begin{tabular}{|c||c|c|c||c|c||c|c||c|c|}
 					\hline
 					& & & & \multicolumn{2}{c||}{AA(100)}   & \multicolumn{2}{c||}{aAA(100)[10]--FP[5]} & \multicolumn{2}{c|}{GMRES(100) } \\
 					\cline{5-10}
 					problem & $n$ & $\kappa(\mathbf{A})$ & $\kappa(\mathbf{P}^{-1}\mathbf{A})$  & $\texttt{it}$ & CPU & $\texttt{it}$ & CPU & $\texttt{it}$ & CPU \\
 					\hline
 					fidap008 & 3096 & 4.48e+10 & 7.27e+08 &  5000$\dagger$ &  46.8 &  5000$\dagger$ & 37.8 & 5000$\dagger$ & 2.90 \\
 					\hline
 					fidap029 & 2870 & 7.52e+02 & 5.83e+00 &  23 & 0.02   & 23  & 0.014 & 22  & 0.06  \\
 					\hline
 					fidapm37 & 9152 & 8.59e+09 & 2.84e+07   & 5000$\dagger$  & 226   &  5000$\dagger$ & 182  & 5000$\dagger$  & 20.4  \\
 					\hline
 			\end{tabular}}
 		\end{center}
 	\end{table}

 	\begin{table}[!ht]
 		\caption{Number of iterations $\texttt{it}$ and CPU time for  AA($\infty$),  AA(60), aAA(60)[8]--FP[4], aAA(60)[10]--FP[5], and aAA($\infty$)[10]--FP[5], when applying Jacobi using all ones as the initial guess. The notation $\dagger$ indicates that the solver did not achieve the desired stopping criterion within the 5000 iterations. The notation $\ddagger$ means that Matlab reached the maximum allowed CPU time for the run.}\label{matrix_market2-60}
 		\begin{center}
 			\resizebox{1\textwidth}{!}{ 
 				\begin{tabular}{|c||c|c||c|c||c|c||c|c||c|c|}
 					\hline
 					& \multicolumn{2}{c||}{AA($\infty$)} & \multicolumn{2}{c||}{AA(60)}  & \multicolumn{2}{c||}{aAA(60)[8]--FP[4]} & \multicolumn{2}{c||}{aAA(60)[10]--FP[5]} & \multicolumn{2}{c|}{aAA($\infty$)[10]--FP[5]} \\
 					\cline{2-11}
 					problem & $\texttt{it}$ & CPU & $\texttt{it}$ & CPU & $\texttt{it}$ & CPU & $\texttt{it}$ & CPU & $\texttt{it}$ & CPU\\
 					\hline
 					fidap008 & 3099   & 7857    &  5000$\dagger$ & 25.5   &  5000$\dagger$ & 20.9   & 5000$\dagger$  & 22.1 & 5000$\dagger$  & 22,196   \\
 					\hline
 					fidap029 &  23 &  0.022  &  23 & 0.02   &  23 & 0.015   &  23 & 0.015 &  23 & 0.016   \\
 					\hline
 					fidapm37 & 2396  & 14,153    &  5000$\dagger$ & 100   & 5000$\dagger$  & 87.5   & 5000$\dagger$  & 89.4 & $\ddagger$  & $\ddagger$   \\
 					\hline
 			\end{tabular}}
 		\end{center}
 	\end{table}

 	\begin{table}[!ht]
 		\caption{Number of iterations $\texttt{it}$ and CPU time for  AA($\infty$),  AA(60), aAA(60)[10]--FP[5], aAA($\infty$)[10]--FP[5], GMRES(60), and  GMRES($\infty$), when applying Gauss--Seidel using all ones as the initial guess. The notation $\dagger$ indicates that the solver did not achieve the desired stopping criterion within the 5000 iterations.}\label{matrix_market2-GS}
 		\begin{center}
 			\resizebox{1\textwidth}{!}{ 
 				\begin{tabular}{|c||c|c||c|c||c|c||c|c||c|c||c|c|}
 					\hline
 					& \multicolumn{2}{c||}{AA($\infty$)} & \multicolumn{2}{c||}{AA(60)}  &  \multicolumn{2}{c||}{aAA(60)[10]--FP[5]} & \multicolumn{2}{c||}{aAA($\infty$)[10]--FP[5]} & \multicolumn{2}{c||}{GMRES(60)} & \multicolumn{2}{c|}{GMRES($\infty$)} \\
 					\cline{2-13}
 					problem & $\texttt{it}$ & CPU & $\texttt{it}$ & CPU & $\texttt{it}$ & CPU & $\texttt{it}$ & CPU & $\texttt{it}$ & CPU & $\texttt{it}$ & CPU\\
 					\hline
 					fidap008 & 3371  & 9892   &   5000$\dagger$  & 23.9  & 5000$\dagger$  & 18.2 &  3324 &  6513  & 5000$\dagger$  & 27.4 & 2315  & 36.7   \\
 					\hline
 					fidap029 &  18  & 0.012   & 18  & 0.012   &  18  & 0.019 &  18 & 0.011  & 19  & 0.38  & 18  & 0.012   \\
 					\hline
 					fidapm37 & 1603  & 4437  & 5000$\dagger$  & 84.19   & 5000$\dagger$  & 84.1 & 1778  & 4164 & 5000$\dagger$  & 142 &  1826 &  141  \\
 					\hline
 			\end{tabular}}
 		\end{center}
 	\end{table}

 	From Tables \ref{matrix_market1-100}, \ref{matrix_market2-60} and \ref{matrix_market2-GS}, we can observe that aAA($m$)[$s$]--FP[$t$] is able to outperform AA($m$) and GMRES($m$) for problem fidap029. In fact, restarted GMRES($m$) with restart $m=100$ takes 22 iterations with 0.06 seconds for this problem. On the other hand, aAA(100)[10]--FP[5] requires 0.014 seconds for reaching convergence, and is about two times faster than AA(100). For problems fidap008 and fidapm37, the solvers are not able to converge to the prescribed accuracy within 5000 iterations (reach $10^{-5}$), and one has to employ full GMRES or window size $m=\infty$ within Anderson approach. This is true also when applying Gauss--Seidel as a fixed-point iteration. We believe that this is the case because the preconditioners employed are not well suited for these two problems. We would like to mention that, for Jacobi, our results differ from the ones presented in \cite{pratapa2016anderson}; this is because the method presented in \cite{pratapa2016anderson} includes a relaxation parameter in the Anderson acceleration, which we do not explore.

 	For these tests, the results suggest that effective preconditioner design, when combined with AA-type methods, is necessary to achieve fast convergence, which we leave for future work. Conversely, AA-type methods may be better suited for nonlinear problems, as demonstrated by our simulations in later sections.

 \subsection{Application of aAA--FP to the incompressible Navier--Stokes equations}
 In this section, we present the effect of AA and aAA--FP on accelerating the Picard iteration for the solution of the incompressible Navier--Stokes equations.
 
 Given a spatial domain $\Omega \subset \mathbb{R}^d$, with $d \in \{2,3\}$, the incompressible Navier--Stokes equations are given by
 \begin{displaymath}
 	\left\{
 	\begin{array}{rr}
 		- \nu \nabla^2 \vec{u} + \vec{u} \cdot \nabla \vec{u} + \nabla p = \vec{f}, & \mathrm{in} \; \Omega, \\
 		- \nabla \cdot \vec{u} = 0, & \mathrm{in} \; \Omega, \\
 		\vec{u} = \vec{g}, & \mathrm{on} \; \partial \Omega, 
 	\end{array}
 	\right.
 \end{displaymath}
 where the force function $\vec{f}$ and the boundary conditions $\vec{g}$ are given. The value $\nu$ is called the viscosity of the fluid. The flow described by the Navier--Stokes equations is influenced by the viscosity. In fact, the type of flow is determined by the Reynolds number $Re=\frac{L V}{\nu}$, where $L$ and $V$ are the characteristic length and velocity scales of the fluid, respectively. Specifically, for small Reynolds number, the flow is laminar while becoming turbulent at high Reynolds number. We consider $\Omega=(0,1)^2$, $\vec{f}=0$, and
 \begin{displaymath}
 	\vec{g} = \left\{
 	\begin{array}{ll}
 		\left[1,0\right]^T & \mathrm{on} \: \partial \Omega_1 = \left(0,1 \right)\times \left\{1\right\},\\
 		\left[0,0\right]^T & \mathrm{on} \:\partial \Omega \setminus \partial\Omega_1.
 	\end{array}
 	\right.
 \end{displaymath}
 
 Starting from an approximation $\vec{u}^{\, (k)}$ and $p^{(k)}$ of $\vec{u}$ and $p$, respectively, at each Picard step, one solves the following Oseen problem:
 \begin{displaymath}
 	\left\{
 	\begin{array}{rr}
 		- \nu \nabla^2 \vec{\delta u}^{(k)} + \vec{u}^{\, (k)} \cdot \nabla \vec{\delta u}^{(k)} + \nabla \delta p^{(k)} = \hat{f}, & \mathrm{in} \; \Omega, \\
 		- \nabla \cdot \vec{\delta u}^{(k)} = 0, & \mathrm{in} \; \Omega, \\
 		\vec{\delta u}^{(k)} = 0, & \mathrm{on} \; \partial \Omega.
 	\end{array}
 	\right.
 \end{displaymath}
 Then, the new approximation is given by $\vec{u}^{\, (k+1)} = \vec{u}^{\, (k)} + \vec{\delta u}^{(k)}$ and $p^{(k + 1)} = p^{(k)} + \delta p^{(k)}$. In the Oseen problem, the function $\hat{f}$ contains information on the nonlinear residual.
 
 Employing inf--sup stable finite elements as discretization, at each Picard step one has to solve a saddle-point system of the form
 \begin{equation}\label{Picard_linearization}
 	\left[
 	\begin{array}{cc}
 		\mathbf{A} & \mathbf{B}^T\\
 		\mathbf{B} & \mathbf{0}
 	\end{array}
 	\right]
 	\left[
 	\begin{array}{c}
 		\boldsymbol{\delta u}^{(k)} \\
 		\boldsymbol{\delta p}^{(k)}
 	\end{array}
 	\right] =
 	\left[
 	\begin{array}{c}
 		\boldsymbol{\hat{f}} \\
 		\boldsymbol{0}
 	\end{array}
 	\right].
 \end{equation}
 Here, we set $\mathbf{A}=\nu \mathbf{K} + \mathbf{N}$, with $\mathbf{K}$ the vector-stiffness matrix and $\mathbf{N}$ the vector-convection matrix, while $\mathbf{B}$ is the (negative) divergence matrix. As we mentioned above, at high Reynolds number the flow becomes turbulent. In this case, the problem is convection-dominated, and one requires to employ a stabilization technique. For this test, we employ the Local Projection Stabilization (LPS) technique described in \cite{Becker_Braack,Becker_Vexler,Braack_Burman}. In this test, we employ inf--sup stable Taylor--Hood $[{Q}_2]^2$--${Q}_1$ finite elements as discretization in the spatial dimensions.
 
 We now test how AA($m$) and aAA($m$)[$s$]--FP[$t$] can accelerate the convergence of the Picard iteration. We employ a level of refinement $l=6$, represented by a uniform grid of size $129 \times 129$ for the velocity and of size $65 \times 65$ for the pressure, while letting varying the viscosity $\nu$. We run AA($m$), aAA($m$)[$s$]--FP[$t$], and the Picard iterations until a reduction of $10^{-10}$ in the residual of the fixed point iteration is obtained. For the results presented here, each Picard linearization of the Navier--Stokes equations (that is, the linear system in \eqref{Picard_linearization}) is solved exactly, by employing Matlab's backslash. We employ the zero vector as the initial guess for the tests in this subsection. We report in Figure \ref{fig:Navier-Stokes1} the results with $\nu=\frac{1}{500}$, and in Figure \ref{fig:Navier-Stokes2} the results with $\nu=\frac{1}{2500}$, for $m=1$, 5, and 10.  From Figures \ref{fig:Navier-Stokes1}--\ref{fig:Navier-Stokes2} we can observe that the number of iterations required by aAA($m$)[$s$]--FP[$t$] is comparable with the number of iterations required by AA($m$) for reaching the prescribed reduction of the nonlinear residual.

 \begin{figure}[!htb]
 	\begin{subfigure}[b]{0.49\textwidth}
 		\centering
 		\includegraphics[width=1.\linewidth]{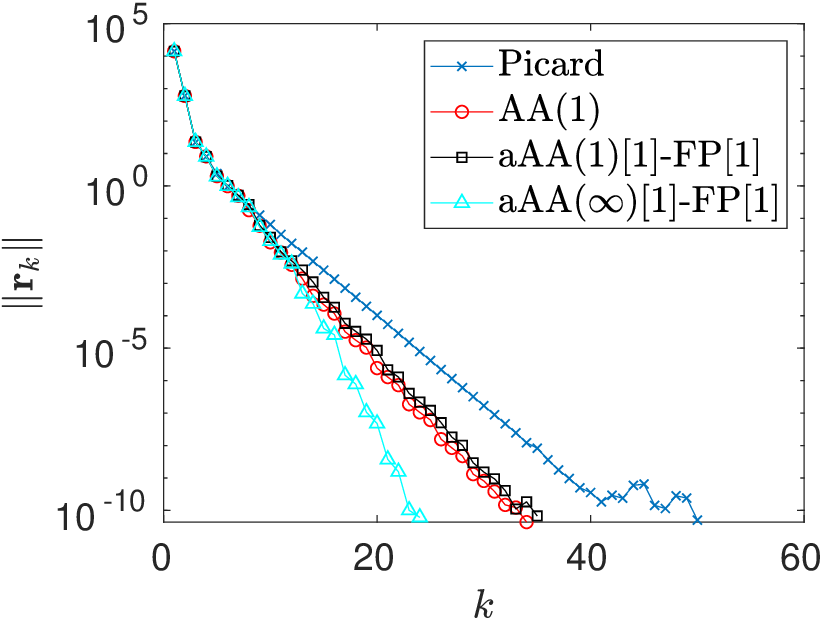}
 		\caption{$m=t=1$ and aAA($\infty$)[1]--FP[1].}
 	\end{subfigure}
 	\begin{subfigure}[b]{0.49\textwidth}
 		\centering
 		\includegraphics[width=1.\linewidth]{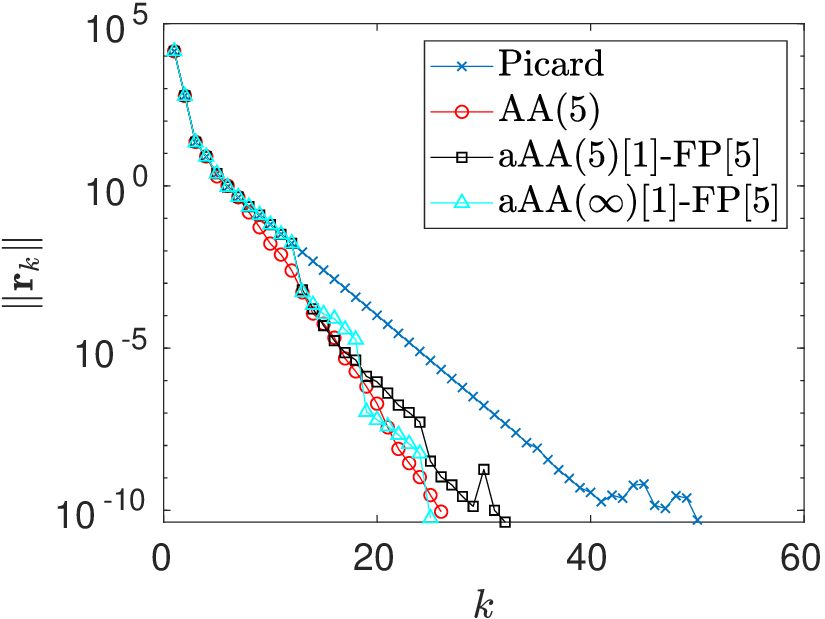}
 		\caption{$m=t=5$ and aAA($\infty$)[1]--FP[5].}
 	\end{subfigure}
 	\begin{subfigure}[b]{0.49\textwidth}
 		\centering
 		\includegraphics[width=1.\linewidth]{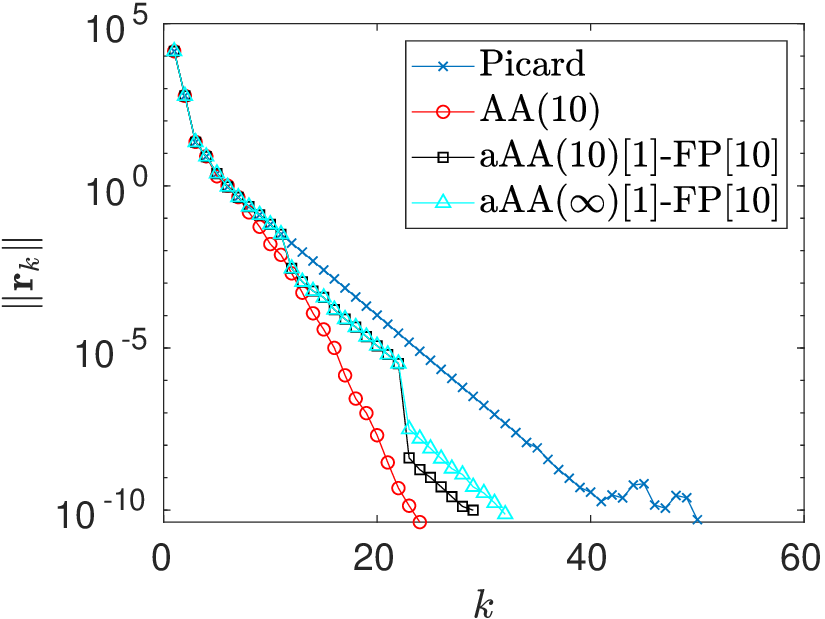}
 		\caption{$m=t=10$ and aAA($\infty$)[1]--FP[10].}
 	\end{subfigure}
 	\begin{subfigure}[b]{0.49\textwidth}
 		\centering
 		\includegraphics[width=1.\linewidth]{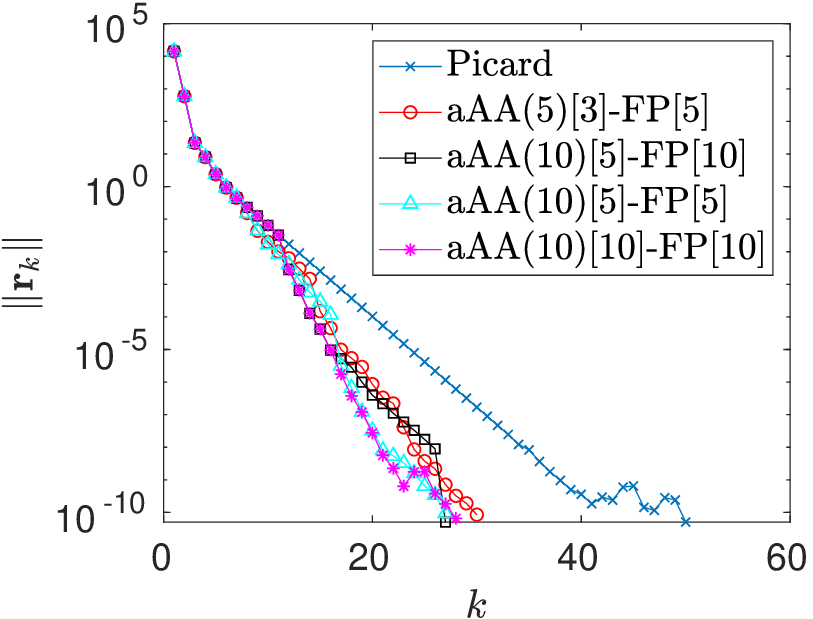}
 		\caption{Mix values.}
 	\end{subfigure}
 	\centering
 	\caption{Comparison of AA($m$), aAA($m$)[1]--FP[$t$], and Picard iteration, with level of refinement $l=6$ and $\nu=\frac{1}{500}$, for $m=1$, 5, 10, and $\infty$, and $t=1$, 5, and 10.}\label{fig:Navier-Stokes1}
 \end{figure}
 
 \begin{figure}[!htb]
 	\begin{subfigure}[b]{0.49\textwidth}
 		\centering
 		\includegraphics[width=1.\linewidth]{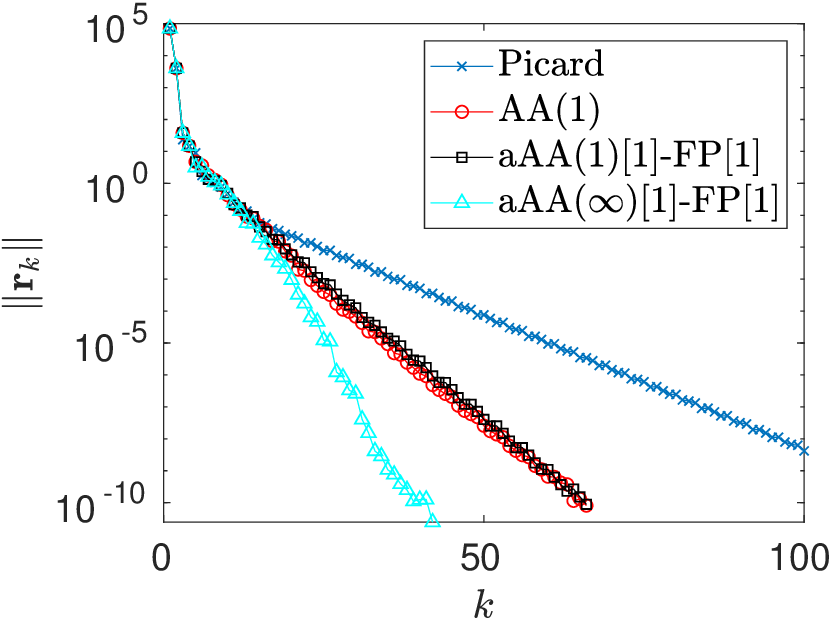}
 		\caption{$m=1$.}
 	\end{subfigure}
 	\begin{subfigure}[b]{0.49\textwidth}
 		\centering
 		\includegraphics[width=1.\linewidth]{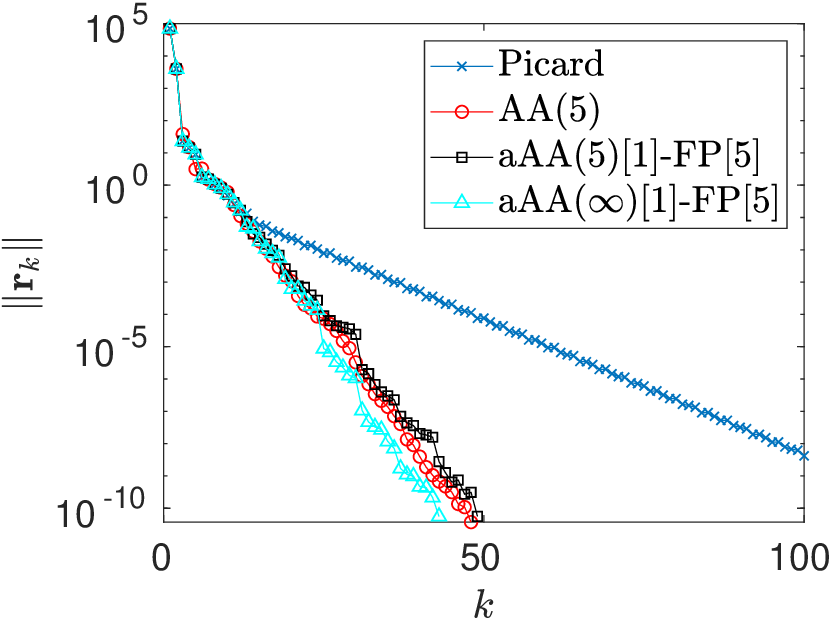}
 		\caption{$m=5$.}
 	\end{subfigure}
 	\begin{subfigure}[b]{0.49\textwidth}
 		\centering
 		\includegraphics[width=1.\linewidth]{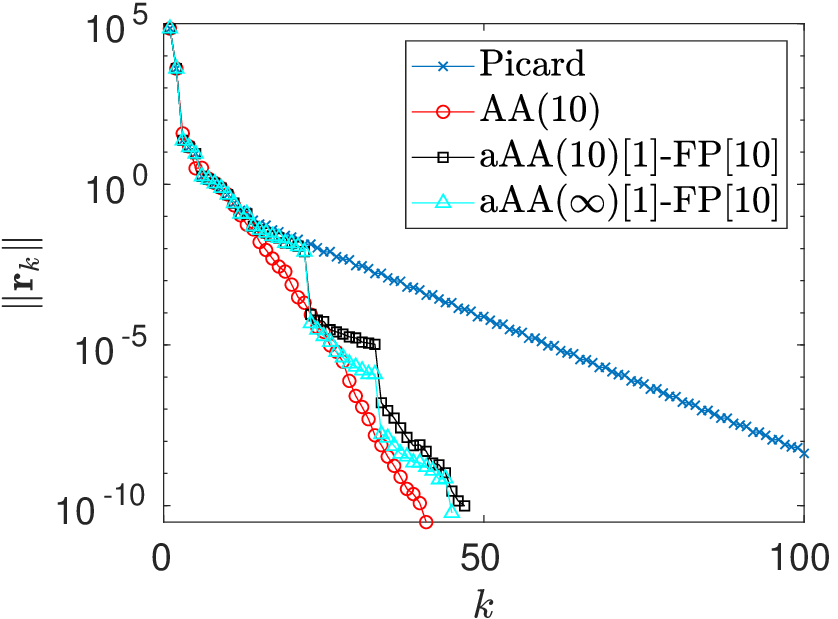}
 		\caption{$m=10$.}
 	\end{subfigure}
 	\begin{subfigure}[b]{0.49\textwidth}
 		\centering
 		\includegraphics[width=1.\linewidth]{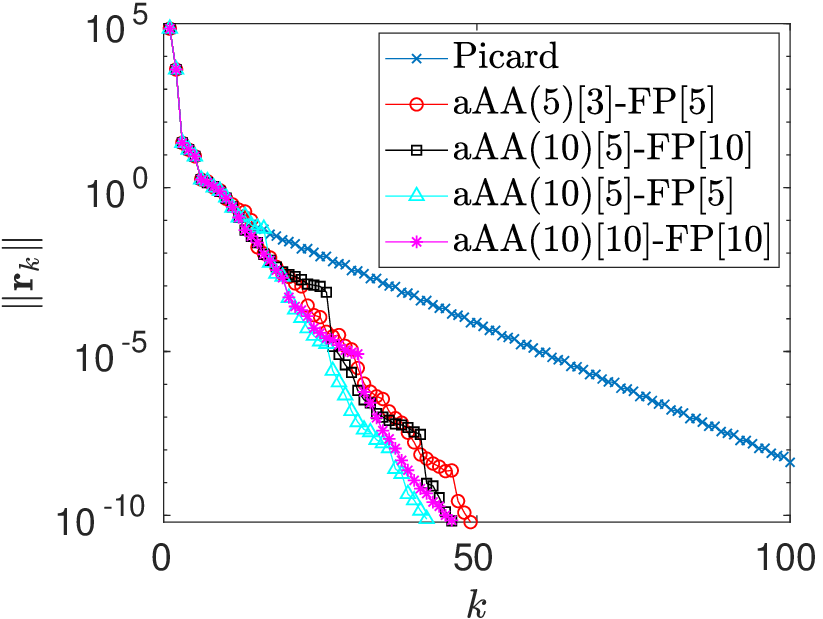}
 		\caption{Mix values.}
 	\end{subfigure}
 	\centering
 	\caption{Comparison of AA($m$), aAA($m$)[1]--FP[$m$], and Picard iteration, with level of refinement $l=6$ and $\nu=\frac{1}{2500}$, for $m=1$, 5, and 10.}\label{fig:Navier-Stokes2}
 \end{figure}

 Further, we report in Table \ref{table:AAPicard} the number of iterations together with the CPU time required for reaching the prescribed accuracy in the case of $\nu=\frac{1}{2500}$ for AA($m$) and aAA($m$)[$s$]--FP[$t$] methods with different values of $m$, $s$, and $t$. We mention that the CPU times include the time required for the construction of the finite element matrices and of the nonlinear residual, aside the times for solving the linear system in \eqref{Picard_linearization}. We mention here that the construction of the stabilization matrix at each non-linear iteration is the most expensive work, taking about 60\% of the total CPU times.
 
 First, we run AA($\infty$) and it takes 39 iterations with 11,281 seconds. We mention that the Picard iteration does not converge in 100 iterations, reaching a tolerance of $4.2 \cdot 10^{-9}$ in 27,999 seconds. These results will be a reference for aAA-FP and windowed AA. We show that aAA-FP is competitive with AA in Table \ref{table:AAPicard}: aAA(20)[20]--FP[20] and aAA(20)[25]--FP[20] outperform aAA($20$) in terms of CPU time, aAA($20$) outperforms AA($\infty$) in terms of CPU time and iteration number. For other values of  $m$, $s$, and $t$ considered here, aAA($m$)[$s$]--FP[$t$] requires between 4 and 13 iterations more than AA($m$) to reach convergence. All the tests for aAA-FP and AA($m$) accelerate Picard iteration in terms of both CPU time and iteration number.  aAA($m$)[$s$]--FP[$t$] outperforms aAA($m$)[1]--FP[$t$] in terms of CPU time. Finally, we would like to note that, as opposed to common opinion, for this particular problem finite windowed AA($m$) performs better than AA($\infty$).

 \begin{table}[!ht]
 	\caption{Number of iterations and CPU time for AA($m$) and aAA($m$)[$s$]--FP[$t$] methods to reach the prescribed accuracy when accelerating the Picard iteration with $\nu = \frac{1}{2500}$ for different values of $m$, $s$, and $t$.}\label{table:AAPicard}
 	\begin{center}
 		{\begin{tabular}{|cc|cc|cc|cc|}
 				\hline
 				\multicolumn{2}{|c|}{AA(10)} & \multicolumn{2}{c|}{AA(15)} & \multicolumn{2}{c|}{AA(20)}   \\
 				\hline
 				$\texttt{it}$ & CPU & $\texttt{it}$ & CPU & $\texttt{it}$ & CPU \\
 				41 & 11,837 & 38 & 10,982 &  \cellcolor{pink} 37 &  \cellcolor{green}10,495   \\
 				\hline
 				\multicolumn{2}{|c|}{aAA(10)[1]--FP[5]} & \multicolumn{2}{c|}{aAA(15)[1]--FP[8]} & \multicolumn{2}{c|}{aAA(20)[1]--FP[10]} \\
 				\cline{1-6}
 				$\texttt{it}$ & CPU & $\texttt{it}$ & CPU & $\texttt{it}$ & CPU \\
 				46  & 14,413  & 46  & 14,322  & 45 & 14,058 \\
 				\cline{1-6}
 				\multicolumn{2}{|c|}{aAA(10)[5]--FP[10]} & \multicolumn{2}{c|}{aAA(15)[10]--FP[15]} & \multicolumn{2}{c|}{aAA(20)[10]--FP[20]} \\
 				\cline{1-6}
 				$\texttt{it}$ & CPU & $\texttt{it}$ & CPU & $\texttt{it}$ & CPU \\
 				46 & 13,084 & 43 & 12,513 & 50 & 14,272 \\
 				\cline{1-6}
 				\multicolumn{2}{|c|}{aAA(10)[10]--FP[10]} & \multicolumn{2}{c|}{aAA(15)[15]--FP[15]} & \multicolumn{2}{c|}{aAA(20)[20]--FP[20]} \\
 				\cline{1-6}
 				$\texttt{it}$ & CPU & $\texttt{it}$ & CPU & $\texttt{it}$ & CPU \\
 				46  & 12,965 & 47  & 13,374  &  37 & 10,452 \\
 				\cline{1-6}
 				\multicolumn{2}{|c|}{aAA(10)[15]--FP[10]} & \multicolumn{2}{c|}{aAA(15)[20]--FP[15]} & \multicolumn{2}{c|}{aAA(20)[25]--FP[20]} \\
 				\cline{1-6}
 				$\texttt{it}$ & CPU & $\texttt{it}$ & CPU & $\texttt{it}$ & CPU \\
 				45 & 12,724 & 44 & 12,732 &  \cellcolor{pink} 37  &  \cellcolor{green} 10,278  \\
 				\cline{1-6}
 		\end{tabular}}
 	\end{center}
 \end{table}

 \subsection{Application of aAA--FP to nonlinear optimization}
 
 In this subsection, we consider problems from constrained optimization. In particular, we show how aAA($m$)[$s$]--FP[$t$] can significantly speed up ADMM \cite{ADMM}.
 
 Given the separable constrained optimization problem
 \begin{displaymath}
 	\min_{\mathbf{x}, \mathbf{y}} f_1(\mathbf{x}) + f_2(\mathbf{y}),
 \end{displaymath}
 such that
 \begin{displaymath}
 	\mathbf{A} \mathbf{x} + \mathbf{B} \mathbf{y} = \mathbf{b},
 \end{displaymath}
 ADMM solves a sequence of subproblems in the primal variables and then updates the Lagrange multiplier. Specifically, after introducing the Lagrange multiplier $\bm{\lambda}$, one considers the Lagrangian
 \begin{displaymath}
 	L_{\mu}(\mathbf{x}, \mathbf{y}, \bm{\lambda}) = f_1(\mathbf{x}) + f_2(\mathbf{y}) + \bm{\lambda}^T (\mathbf{A} \mathbf{x} + \mathbf{B} \mathbf{y}- \mathbf{b}) + \dfrac{\mu}{2} \|\mathbf{A} \mathbf{x} + \mathbf{B} \mathbf{y} - \mathbf{b} \|^2_2,
 \end{displaymath}
 where $\mu>0$ is a penalty parameter. Then, starting from approximations $\mathbf{y}^{(k)}$ and $\bm{\lambda}^{(k)}$, ADMM alternates the minimization of the Lagrangian with respect to the primal variable $\mathbf{x}$ and $\mathbf{y}$, then updates $\bm{\lambda}$, as follows:
 \begin{displaymath}
 	\left\{
 	\begin{array}{l}
 		\vspace{1ex}
 		\mathbf{x}^{(k+1)} = \argmin_{\mathbf{x}} L_{\mu}(\mathbf{x}, \mathbf{y}^{(k)}, \bm{\lambda}^{(k)}) \\
 		\vspace{1ex}
 		\mathbf{y}^{(k+1)} = \argmin_{\mathbf{y}} L_{\mu}(\mathbf{x}^{(k+1)}, \mathbf{y}, \bm{\lambda}^{(k)})\\
 		\bm{\lambda}^{(k+1)} = \bm{\lambda}^{(k)} + \mu (\mathbf{A} \mathbf{x}^{(k+1)} + \mathbf{B} \mathbf{y}^{(k+1)} - \mathbf{b}).
 	\end{array}
 	\right.
 \end{displaymath}
 By introducing the variable $\bar{\bm{\lambda}}:= \frac{1}{\mu} \bm{\lambda}$, one can rewrite the Lagrangian $L_{\mu}(\mathbf{x}, \mathbf{y}, \bm{\lambda})$ in the following equivalent form:
 \begin{displaymath}
 	L_{\mu}(\mathbf{x}, \mathbf{y}, \bar{\bm{\lambda}}) = f_1(\mathbf{x}) + f_2(\mathbf{y}) + \dfrac{\mu}{2} \| \mathbf{A} \mathbf{x} + \mathbf{B} \mathbf{y} - \mathbf{b} + \bar{\bm{\lambda}} \|^2_2 - \dfrac{\mu}{2} \| \bar{\bm{\lambda}} \|^2_2.
 \end{displaymath}
 Then, starting from $\mathbf{y}^{(k)}$ and $\bar{\bm{\lambda}}^{(k)}$, one can obtain that the ADMM steps are given by
 \begin{equation}\label{ADMM_scaled}
 	\left\{
 	\begin{array}{l}
 		\vspace{1ex}
 		\mathbf{x}^{(k+1)} = \argmin_{\mathbf{x}} f_1(\mathbf{x}) + \dfrac{\mu}{2} \| \mathbf{A} \mathbf{x} + \mathbf{B} \mathbf{y}^{(k)} - \mathbf{b} + \bar{\bm{\lambda}}^{(k)} \|^2_2 \\
 		\vspace{1ex}
 		\mathbf{y}^{(k+1)} = \argmin_{\mathbf{y}} f_2(\mathbf{y}) + \dfrac{\mu}{2} \| \mathbf{A} \mathbf{x}^{(k+1)} + \mathbf{B} \mathbf{y} - \mathbf{b} + \bar{\bm{\lambda}}^{(k)} \|^2_2\\
 		\bar{\bm{\lambda}}^{(k+1)} = \bar{\bm{\lambda}}^{(k)} + \mathbf{A} \mathbf{x}^{(k+1)} + \mathbf{B} \mathbf{y}^{(k+1)} - \mathbf{b}.
 	\end{array}
 	\right.
 \end{equation}
 Convergence is monitored by means of primal feasibility
 \begin{equation}\label{eq:primal-feasibility}
 	\mathbf{r}^{(k+1)}_p = \mathbf{A} \mathbf{x}^{(k+1)} + \mathbf{B} \mathbf{y}^{(k+1)} - \mathbf{b}
 \end{equation}
 and dual feasibility
 \begin{equation}\label{eq:dual-feasibility}
 	\mathbf{r}^{(k+1)}_d = \mu \mathbf{A}^T\mathbf{B} (\mathbf{y}^{(k+1)} - \mathbf{y}^{(k)}),
 \end{equation}
 as those two residuals converge to zero as ADMM proceeds, see, for example, \cite{ADMM}.
 
 ADMM can be reformulated as a fixed-point iteration of the form in \eqref{eq:FP} by setting $\mathbf{x}_k=[\mathbf{y}^{(k)}, \bar{\bm{\lambda}}^{(k)}]$, solving the first step in \eqref{ADMM_scaled}, and then considering $q(\cdot)$ as the result obtained by the second and third steps in \eqref{ADMM_scaled}.
 
 In what follows, we show that accelerating ADMM with aAA($m$)[$s$]--FP[$t$] is more beneficial than applying the simple AA($m$) to accelerate ADMM. We mention here that a proper study on the speed-up obtained when applying AA($m$) to accelerate ADMM was done in \cite{wang2021asymptotic}. As opposed to the work \cite{wang2021asymptotic}, where the convergence of the method is monitored by checking the norm of the primal feasibility in \eqref{eq:primal-feasibility} and the dual feasibility in \eqref{eq:dual-feasibility}, in our tests we monitor the norm of the residual $r(\mathbf{x}_k)$ defined in \eqref{eq:kthresidual}. We would like to mention that we do not observe a substantial difference in the solutions obtained by the two stopping criterion. We run the ADMM, AA($m$), and aAA($m$)[$s$]--FP[$t$] up to a tolerance of $10^{-12}$ is achieved.
 
 We consider three different problems. The first two are the total variation and lasso problems, which are classical nonlinear and nonsmooth problems arising in optimization. The last problem is a nonnegative least squares problem, a nonlinear problem with inequality constraints.

 \subsubsection{Total variation}\label{sec:TV}
 We consider here the following total variation problem:
 \begin{equation}\label{eq:TV}
 	\min_{\mathbf{x}} \dfrac{1}{2} \| \widehat{\mathbf{x}} - \mathbf{x}\|^2_2 + \beta \| \mathbf{G} \mathbf{x} \|^2_1,
 \end{equation}
 where $\widehat{\mathbf{x}}\in \mathbb{R}^n$ is a given vector, $\beta>0$ is a smoothing parameter, and $\mathbf{G} \in \mathbb{R}^{(n-1)\times n}$ is given by
 \begin{displaymath}
 	\mathbf{G}=\left[
 	\begin{array}{ccccc}
 		-1 & 1 & \\
 		& \ddots & \ddots \\
 		& & -1 & 1
 	\end{array}
 	\right].
 \end{displaymath}
 Note that \eqref{eq:TV} is nonlinear and nonsmooth, and it can be reformulated within the ADMM framework by introducing the variable $\mathbf{y} = \mathbf{G} \mathbf{x}$, resulting in the following minimization problem:
 \begin{displaymath}
 	\min_{\mathbf{x}, \mathbf{y}} \dfrac{1}{2} \| \widehat{\mathbf{x}} - \mathbf{x}\|^2_2 + \beta \| \mathbf{y} \|^2_1,
 \end{displaymath}
 such that
 \begin{displaymath}
 	\mathbf{G} \mathbf{x} - \mathbf{y} = \mathbf{0}.
 \end{displaymath}
 
 Following the work in \cite{wang2021asymptotic}, we set $n=1000$ and take the vector $\widehat{\mathbf{x}}$ to be randomly generated from the standard normal distribution. Further, we set the smoothing parameter $\beta=0.001 \| \widehat{\mathbf{x}} \|_{\infty}$ and the penalty parameter $\mu=10$. For this test, we employ the zero vector as the initial guess.
 
 We apply aAA($m$)[$s$]--FP[$t$] to accelerate the fixed-point iteration generated by ADMM, for different values of $m$, $s$, and $t$, and compare the results with pure ADMM and AA($m$). We allow for 1000 iterations and set as stopping criterion a reduction of $10^{-12}$ in the residual. In Table \ref{table:TV1}, we report the number of iterations and the CPU times required by each solver for different values of $m$, $s$, and $t$. Further, we report in Figure \ref{fig:TV} the history of the residual norm, for ADMM, A($m$), and aAA($m$)[$s$]--FP[$t$], for various values of $m$, $s$, and $t$.

 \begin{table}[!ht]
 	\caption{Total variation. Number of iterations and CPU time for ADMM, AA, and aAA--FP methods. The notation $\dagger$ indicates that the solver did not achieve the desired stopping criterion within 1000 iterations.}\label{table:TV1}
 	\begin{scriptsize}
 		\begin{center}
 			{\begin{tabular}{|cc|cc|cc|cc|cc|}
 					\hline
 					\multicolumn{2}{|c|}{ADMM} & \multicolumn{2}{c|}{AA(1)} & \multicolumn{2}{c|}{AA(5)} & \multicolumn{2}{c|}{AA(10)} & \multicolumn{2}{c|}{AA($\infty$)} \\
 					\hline
 					$\texttt{it}$ & CPU & $\texttt{it}$ & CPU & $\texttt{it}$ & CPU & $\texttt{it}$ & CPU & $\texttt{it}$ & CPU \\
 					\cellcolor{gray}1000$\dagger$ & 332 & 582 & 215 & 218 & 79.7 & 150 & 54.9 & \cellcolor{pink} 91 & \cellcolor{green} 33.5 \\
 					\hline
 					\multicolumn{2}{|c|}{aAA(10)[1]--FP[10]} & \multicolumn{2}{c|}{aAA(1)[5]--FP[2]} & \multicolumn{2}{c|}{aAA(5)[1]--FP[5]} & \multicolumn{2}{c|}{aAA(10)[5]--FP[10]} & \multicolumn{2}{c|}{aAA($\infty$)[1]-FP[1]} \\
 					\hline
 					$\texttt{it}$ & CPU & $\texttt{it}$ & CPU & $\texttt{it}$ & CPU & $\texttt{it}$ & CPU & $\texttt{it}$ & CPU \\
 					122 & 46.1 & 144 & 54.6 & 133 & 49.1 & 106 & 38.7 & 92 & 37.4 \\
 					\hline
 					\multicolumn{2}{|c|}{aAA(3)[3]--FP[3]} & \multicolumn{2}{c|}{aAA(8)[3]--FP[7]} & \multicolumn{2}{c|}{aAA(5)[3]--FP[5]} & \multicolumn{2}{c|}{aAA(10)[10]--FP[10]} & \multicolumn{2}{c|}{aAA($\infty$)[2]--FP[1]} \\
 					\hline
 					$\texttt{it}$ & CPU & $\texttt{it}$ & CPU & $\texttt{it}$ & CPU & $\texttt{it}$ & CPU & $\texttt{it}$ & CPU \\
 					121 & 46.1 & 110 & 42.4 & 119 & 44.9 & \cellcolor{pink} 100 & \cellcolor{green} 36.5 &  94 & 34.1  \\
 					\hline
 					\multicolumn{2}{|c|}{aAA(5)[5]--FP[3]} & \multicolumn{2}{c|}{aAA(3)[10]--FP[5]} & \multicolumn{2}{c|}{aAA(5)[5]--FP[5]} & \multicolumn{2}{c|}{aAA(10)[15]--FP[10]} & \multicolumn{2}{c|}{aAA($\infty$)[5]--FP[2]}\\
 					\hline
 					$\texttt{it}$ & CPU & $\texttt{it}$ & CPU & $\texttt{it}$ & CPU & $\texttt{it}$ & CPU & $\texttt{it}$ & CPU \\
 					109 & 38.2 & 115 & 44.1 & 110 & 40.2 &  97 & 39.5 & 94 & 33.9 \\
 					\hline
 					\multicolumn{2}{|c|}{aAA(5)[7]--FP[5]} & \multicolumn{2}{c|}{aAA(8)[8]--FP[8]} & \multicolumn{2}{c|}{aAA(5)[8]--FP[5]} & \multicolumn{2}{c|}{aAA(10)[15]--FP[15]} & \multicolumn{2}{c|}{aAA($\infty$)[3]--FP[7]}\\
 					\hline
 					$\texttt{it}$ & CPU & $\texttt{it}$ & CPU & $\texttt{it}$ & CPU & $\texttt{it}$ & CPU & $\texttt{it}$ & CPU \\
 					103 & 44.6 & 106 & 37.1   & 101 & 36.5 & 109  & 39.7  & 109 & 41.4   \\
 					\hline
 					\multicolumn{2}{|c|}{aAA(5)[10]--FP[10]} & \multicolumn{2}{c|}{aAA(10)[10]--FP[5]} & \multicolumn{2}{c|}{aAA(5)[10]--FP[5]} & \multicolumn{2}{c|}{aAA(10)[15]--FP[20]} & \multicolumn{2}{c|}{aAA($\infty$)[10]--FP[10]}\\
 					\hline
 					$\texttt{it}$ & CPU & $\texttt{it}$ & CPU & $\texttt{it}$ & CPU & $\texttt{it}$ & CPU & $\texttt{it}$ & CPU \\
 					112 & 38.7 & 101 & 34.4 & 102 & 37.2 & 127  & 46.0  & 101 & 34.7   \\
 					\hline
 			\end{tabular}}
 		\end{center}
 	\end{scriptsize}
 \end{table}

 \begin{figure}[H]
 	\centering
 	\includegraphics[width=0.49\linewidth]{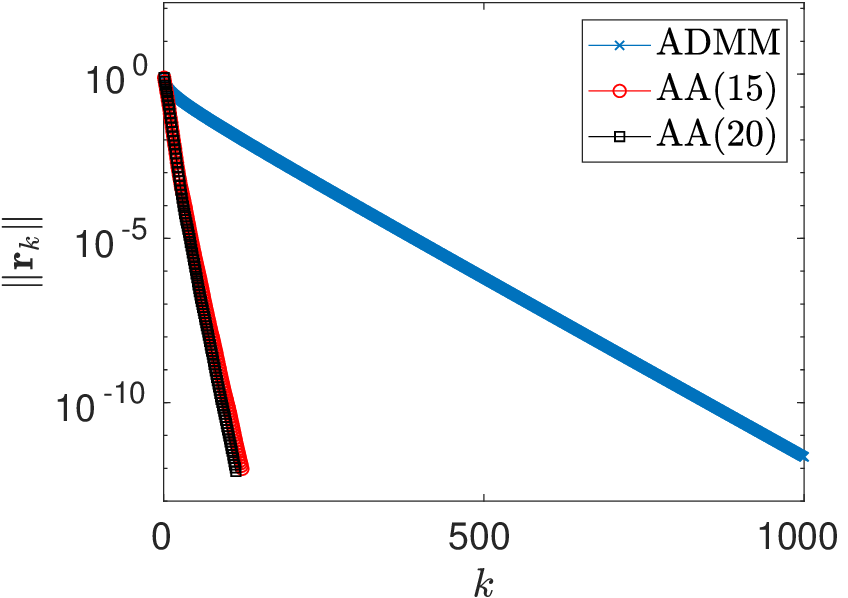}
 	\includegraphics[width=0.49\linewidth]{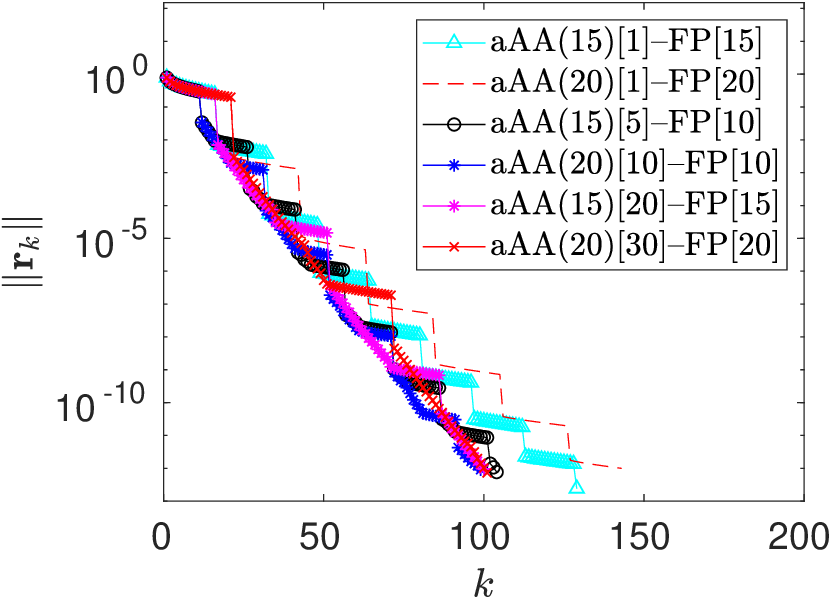}
 	\caption{Comparison of ADMM, AA($m$), and aAA($m$)[$s$]--FP[$t$], for different values of $m$, $s$, and $t$.}
 	\label{fig:TV}
 \end{figure}
 
 First, from Table \ref{table:TV1} we note that increasing $m$ results in a faster convergence of AA($m$), see first row.  Second, all aAA($m$)[$s$]--FP[$t$] considered here outperform AA($m$) in terms of both the number of iterations and CPU time; see the second, third and fourth columns. Especially, aAA(5)[5]--FP[5] converges in about half the number of iterations and CPU time compared to AA(5) for the problem considered here. Third, aAA($m$)[$s$]--FP[$t$] outperform aAA($m$)[1]--FP[$t$] in terms of both the number of iterations and CPU time, except for aAA($10$)[$15$]--FP[$10$] in terms of CPU time.   Fourth, our aAA($m$)[$s$]--FP[$t$] results in a faster convergence than ADMM, being more than 10 times faster in terms of both the number of iterations and CPU time. Finally, we note that for a careful choice of the parameter aAA($m$)[$s$]--FP[$t$] is comparable with AA($\infty$), see, for example aAA(10)[15]--FP[10]; we would like to mention that AA($\infty$) is only used as a reference solver and is not likely employed in practice for the prohibitive dimension of the least squares problem to be solved as the number of iterations grows. We also note that aAA($\infty$)[$s$]--FP[$t$] is as competitive as AA($\infty$), suggesting its potential usefulness in other applications.
 
 From Figure \ref{fig:TV} we observe the speed-up obtained by applying AA($m$) to accelerate ADMM; in fact, AA($m$) is 10 times faster than ADMM alone in terms of number of iterations. Furthermore, we observe that the number of iterations required by aAA($m$)[$s$]--FP[$t$] can be comparable to that required by AA($m$) to reach a prescribed tolerance, provided that the parameters $s$ and $t$ are appropriately chosen.

 \subsubsection{Lasso problem}
 We now consider the lasso problem, which is nonlinear and nonsmooth. Given the matrix $\mathbf{C}\in \mathbb{R}^{n_1 \times n_2}$ and the vector of data $\widehat{\mathbf{x}} \in \mathbb{R}^n_1$, the lasso problem is formulated as follows:
 \begin{displaymath}
 	\min_{\mathbf{x}} \dfrac{1}{2} \| \mathbf{C} \mathbf{x} - \widehat{\mathbf{x}} \|^2_2 + \beta \| \mathbf{x}\|_1,
 \end{displaymath}
 where $\beta>0$ is a regularization parameter. Again, we can reformulate this problem within the ADMM framework, by introducing the variable $\mathbf{y} = \mathbf{x}$ and considering
 \begin{displaymath}
 	\min_{\mathbf{x}, \mathbf{y}} \dfrac{1}{2} \| \mathbf{C} \mathbf{x}- \widehat{\mathbf{x}} \|^2_2 + \beta \| \mathbf{y}\|_1,
 \end{displaymath}
 such that
 \begin{displaymath}
 	\mathbf{x} - \mathbf{y} = \mathbf{0}.
 \end{displaymath}
 
 Following \cite{wang2021asymptotic}, we set $n_1=150$ and $n_2=300$, and take matrix $\mathbf{C}$ to be a randomly generated sparse matrix with density 0.01 and the vector $\widehat{\mathbf{x}}$ randomly generated, both sampled from the normal distribution. Further, we set the regularization parameter $\beta=1$ and the penalty parameter $\mu=10$. As above, we allow for 1000 iterations and set as stopping criterion a reduction of $10^{-12}$ in the residual.
 
 We first consider using the zero vector as the initial guess. We report the number of iterations and CPU time for ADMM, AA, and aAA($m$)[$s$]--FP[$t$] methods in Table \ref{table:lasso1-single}. 
 As we observed for the previous problem, alternating between AA and the fixed-point iteration can result in faster convergence of the method. In Table \ref{table:lasso1-single}, we observed that aAA($m$)[$s$]--FP[$t$] can be between 2 and 3 times faster than AA($m$), compare, for example, aAA(1)[3]--FP[3] and AA(1). More generally, by carefully selecting the parameters, one can obtain a solver that is comparable to AA($\infty$) in terms of both the number of iterations and CPU time; see, e.g., aAA(8)[10]--FP[3] in Table \ref{table:lasso1-single}.  Finally, aAA($m$)[$s$]--FP[$t$] can significantly speed up the performance of ADMM; for example, aAA($8$)[$10$]--FP[$3$] yields a solver that is over 4 times faster than ADMM in CPU time and requires only one-sixth the number of iterations.
 
Motivated by observations in the literature that initial guesses influence the performance of AA, we employ 50 different initial guesses generated by Matlab's rand function with seed 100,000 for the runs. Table \ref{table:lasso1-ave} reports the average number of iterations and CPU time. Although the average iteration count is higher than the single-run results of Table \ref{table:lasso1-single}, the average CPU time is lower than that obtained with a single initial guess. Overall, aAA--FP outperforms AA($m$) and ADMM. The results in Table \ref{table:lasso1-ave} also suggest that different initial guesses have a significant influence on the performance of ADMM, AA, and aAA--FP methods. We do not further investigate the influence of initial guesses on the performance of aAA--FP, which could be an interesting direction for future work.
 
From Tables \ref{table:lasso1-single} and \ref{table:lasso1-ave}, we notice that for fixed $m$ and $t$, increasing $s$ improves the performance, but once $s$ become sufficiently large, the gains diminish. In practice, it is advisable not to choose $s$ to be too large.
 
 \begin{table}[!ht]
 	\caption{Lasso problem. Number of iterations and CPU time for ADMM, AA, and aAA--FP methods using the zero vector as the initial guess.}\label{table:lasso1-single}
 	\begin{scriptsize}
 		\begin{center}
 			{\begin{tabular}{|cc|cc|cc|cc|cc|}
 					\hline
 					\multicolumn{2}{|c|}{ADMM} & \multicolumn{2}{c|}{AA(1)} & \multicolumn{2}{c|}{AA(3)} & \multicolumn{2}{c|}{AA(8)} & \multicolumn{2}{c|}{AA($\infty$)} \\
 					\hline
 					$\texttt{it}$ & CPU & $\texttt{it}$ & CPU & $\texttt{it}$ & CPU & $\texttt{it}$ & CPU & $\texttt{it}$ & CPU \\
 					\cellcolor{gray} 480 & 0.82 & 221 & 0.83 & 92 & 0.45 & 61 & 0.28 & \cellcolor{pink}47 & \cellcolor{green} 0.22 \\
 					\hline
 					\multicolumn{2}{|c|}{aAA(5)[1]--FP[5]} & \multicolumn{2}{c|}{aAA(1)[1]--FP[1]} & \multicolumn{2}{c|}{aAA(3)[1]--FP[3]} & \multicolumn{2}{c|}{aAA(8)[1]--FP[8]} & \multicolumn{2}{c|}{aAA($\infty$)[1]--FP[5]} \\
 					\cline{1-10}
 					$\texttt{it}$ & CPU & $\texttt{it}$ & CPU & $\texttt{it}$ & CPU & $\texttt{it}$ & CPU & $\texttt{it}$ & CPU \\
 					68& 0.25  & 85 & 0.43 & 85 & 0.35 & 72 & 0.36 & 143 & 0.66\\
 					\hline
 					\multicolumn{2}{|c|}{aAA(5)[3]--FP[5]} & \multicolumn{2}{c|}{aAA(1)[3]--FP[3]} & \multicolumn{2}{c|}{aAA(3)[2]--FP[3]} & \multicolumn{2}{c|}{aAA(8)[2]--FP[8]} & \multicolumn{2}{c|}{aAA($\infty$)[1]--FP[10]} \\
 					\cline{1-10}
 					$\texttt{it}$ & CPU & $\texttt{it}$ & CPU & $\texttt{it}$ & CPU & $\texttt{it}$ & CPU & $\texttt{it}$ & CPU \\
 					64 & 0.24 & 70 & 0.27 & 80 & 0.31 & 70 & 0.31 & 74 & 0.33 \\
 					\cline{1-10}
 					\multicolumn{2}{|c|}{aAA(10)[3]--FP[5]} & \multicolumn{2}{c|}{aAA(1)[5]--FP[2]} & \multicolumn{2}{c|}{aAA(3)[3]--FP[3]} & \multicolumn{2}{c|}{aAA(8)[3]--FP[7]} & \multicolumn{2}{c|}{aAA($\infty$)[2]--FP[7]} \\
 					\cline{1-10}
 					$\texttt{it}$ & CPU & $\texttt{it}$ & CPU & $\texttt{it}$ & CPU & $\texttt{it}$ & CPU & $\texttt{it}$ & CPU \\
 					63  & 0.25  & 83 & 0.33 & 71 & 0.27 & 69 & 0.29 & 104 & 0.37 \\
 					\cline{1-10}
 					\multicolumn{2}{|c|}{aAA(10)[5]--FP[5]} & \multicolumn{2}{c|}{aAA(1)[10]--FP[5]} & \multicolumn{2}{c|}{aAA(3)[5]--FP[3]} & \multicolumn{2}{c|}{aAA(8)[10]--FP[3]} & \multicolumn{2}{c|}{aAA($\infty$)[2]--FP[10]} \\
 					\cline{1-10}
 					$\texttt{it}$ & CPU & $\texttt{it}$ & CPU & $\texttt{it}$ & CPU & $\texttt{it}$ & CPU & $\texttt{it}$ & CPU \\
 					67 & 0.26  & 83 & 0.29  &69   &0.27   &  57 & 0.21 & 79  & 0.38  \\
 					\cline{1-10}
 					\multicolumn{2}{|c|}{aAA(10)[15]--FP[5]} & \multicolumn{2}{c|}{aAA(1)[15]--FP[5]} & \multicolumn{2}{c|}{aAA(3)[10]--FP[3]} & \multicolumn{2}{c|}{aAA(8)[15]--FP[3]} & \multicolumn{2}{c|}{aAA($\infty$)[10]--FP[10]} \\
 					\cline{1-10}
 					$\texttt{it}$ & CPU & $\texttt{it}$ & CPU & $\texttt{it}$ & CPU & $\texttt{it}$ & CPU & $\texttt{it}$ & CPU \\
 					\cellcolor{pink} 57 & \cellcolor{green} 0.14  & 93  &  0.12  & 62  &  0.08  & 59  & 0.08  & 104   & 0.21  \\
 					\cline{1-10}
 			\end{tabular}}
 		\end{center}
 	\end{scriptsize}
 \end{table}

 \begin{table}[!ht]
 	\caption{Lasso problem. The average of number of iterations and the average of CPU time for ADMM, AA, and aAA--FP methods using 50 random initial guesses  generated by Matlab's rand function with seed 100,000.}\label{table:lasso1-ave}
 	\begin{scriptsize}
 		\begin{center}
 			{\begin{tabular}{|cc|cc|cc|cc|cc|}
 					\hline
 					\multicolumn{2}{|c|}{ADMM} & \multicolumn{2}{c|}{AA(1)} & \multicolumn{2}{c|}{AA(3)} & \multicolumn{2}{c|}{AA(8)} & \multicolumn{2}{c|}{AA($\infty$)} \\
 					\hline
 					$\texttt{it}$ & CPU & $\texttt{it}$ & CPU & $\texttt{it}$ & CPU & $\texttt{it}$ & CPU & $\texttt{it}$ & CPU \\
 					\cellcolor{gray} 514 & 0.71 & 253 & 0.35 & 120 & 0.17 & 78 & 0.13 & 87 & 0.18 \\
 					\hline
 					\multicolumn{2}{|c|}{aAA(5)[1]--FP[5]} & \multicolumn{2}{c|}{aAA(1)[1]--FP[1]} & \multicolumn{2}{c|}{aAA(3)[1]--FP[3]} & \multicolumn{2}{c|}{aAA(8)[1]--FP[8]} & \multicolumn{2}{c|}{aAA($\infty$)[1]--FP[5]} \\
 					\cline{1-10}
 					$\texttt{it}$ & CPU & $\texttt{it}$ & CPU & $\texttt{it}$ & CPU & $\texttt{it}$ & CPU & $\texttt{it}$ & CPU \\
 					86 & 0.10 & 121 & 0.14 & 96 & 0.12 & 82 & 0.11 & 105 & 0.17 \\
 					\hline
 					\multicolumn{2}{|c|}{aAA(5)[3]--FP[5]} & \multicolumn{2}{c|}{aAA(1)[3]--FP[3]} & \multicolumn{2}{c|}{aAA(3)[2]--FP[3]} & \multicolumn{2}{c|}{aAA(8)[2]--FP[8]} & \multicolumn{2}{c|}{aAA($\infty$)[1]--FP[10]} \\
 					\cline{1-10}
 					$\texttt{it}$ & CPU & $\texttt{it}$ & CPU & $\texttt{it}$ & CPU & $\texttt{it}$ & CPU & $\texttt{it}$ & CPU \\
 					84 & 0.10 & 106 & 0.13 & 92 & 0.13 & 86 & 0.11 & 94 & 0.15 \\
 					\cline{1-10}
 					\multicolumn{2}{|c|}{aAA(10)[3]--FP[5]} & \multicolumn{2}{c|}{aAA(1)[5]--FP[2]} & \multicolumn{2}{c|}{aAA(3)[3]--FP[3]} & \multicolumn{2}{c|}{aAA(8)[3]--FP[7]} & \multicolumn{2}{c|}{aAA($\infty$)[2]--FP[7]} \\
 					\cline{1-10}
 					$\texttt{it}$ & CPU & $\texttt{it}$ & CPU & $\texttt{it}$ & CPU & $\texttt{it}$ & CPU & $\texttt{it}$ & CPU \\
 					77 & 0.10 & 100 & 0.12 & 87 & 0.10 & 83 & 0.12 & 143 & 0.56 \\
 					\cline{1-10}
 					\multicolumn{2}{|c|}{aAA(10)[5]--FP[5]} & \multicolumn{2}{c|}{aAA(1)[10]--FP[5]} & \multicolumn{2}{c|}{aAA(3)[5]--FP[3]} & \multicolumn{2}{c|}{aAA(8)[10]--FP[3]} & \multicolumn{2}{c|}{aAA($\infty$)[2]--FP[10]} \\
 					\cline{1-10}
 					$\texttt{it}$ & CPU & $\texttt{it}$ & CPU & $\texttt{it}$ & CPU & $\texttt{it}$ & CPU & $\texttt{it}$ & CPU \\
 					77 & 0.10 & 115 & 0.14 & 83 & 0.10 & \cellcolor{pink} 75 & \cellcolor{green} 0.09 & 125 & 0.33 \\
 					\cline{1-10}
 					\multicolumn{2}{|c|}{aAA(10)[15]--FP[5]} & \multicolumn{2}{c|}{aAA(1)[15]--FP[5]} & \multicolumn{2}{c|}{aAA(3)[10]--FP[3]} & \multicolumn{2}{c|}{aAA(8)[15]--FP[3]} & \multicolumn{2}{c|}{aAA($\infty$)[10]--FP[10]} \\
 					\cline{1-10}
 					$\texttt{it}$ & CPU & $\texttt{it}$ & CPU & $\texttt{it}$ & CPU & $\texttt{it}$ & CPU & $\texttt{it}$ & CPU \\
 					78 &  0.12  & 116  & 0.17   & 87  &  0.13  &  77 &  0.12 & 385   &  4.2 \\
 					\cline{1-10}
 			\end{tabular}}
 		\end{center}
 	\end{scriptsize}
 \end{table}

 \subsubsection{Nonnegative least squares problem}
 As a final example of accelerating ADMM, we consider the following nonnegative least squares problem:
 \begin{displaymath}
 	\min_{\mathbf{x}} \| \mathbf{C} \mathbf{x} - \widehat{\mathbf{x}} \|_2^2,
 \end{displaymath}
 constrained with $\mathbf{x} \geq \mathbf{0}$. Here, the matrix $\mathbf{C}\in \mathbb{R}^{n_1 \times n_2}$ and the vector $\widehat{\mathbf{x}}\in \mathbb{R}^{n_1}$ are given. The nonnegative least squares problem is an example of nonlinear problem with inequality constraints; see, e.g., \cite{Fu_Zhang_Boyd}.
 
 In order to employ ADMM, we introduce the indicator function $\mathcal{I}_{\mathbb{R}^{n_2}_+}(\cdot)$ given by
 \begin{displaymath}
 	\mathcal{I}_{\mathbb{R}^{n_2}_+}(\mathbf{y}) :=
 	\left\{
 	\begin{array}{ll}
 		0 & \mathrm{if} \; \mathbf{y} \geq 0, \\
 		+\infty & \mathrm{otherwise}.
 	\end{array}
 	\right.
 \end{displaymath}
 Then, after introducing the variable $\mathbf{y}= \mathbf{x}$, we can reformulate the nonlinear least squares problems as follows:
 \begin{displaymath}
 	\min_{\mathbf{x}, \mathbf{y}} \|\mathbf{C} \mathbf{x} - \widehat{\mathbf{x}} \|_2^2 + \mathcal{I}_{\mathbb{R}^{n_2}_+}(\mathbf{y}),
 \end{displaymath}
 constrained with
 \begin{displaymath}
 	\mathbf{x} - \mathbf{y}=\mathbf{0}.
 \end{displaymath}
 
 For this example, we set $n_1=150$, $n_2=300$, and take $\mathbf{C}$ as a randomly generated sparse matrix with density 0.01. Further, the vector $\widehat{\mathbf{x}}$ is randomly generated and sampled from the normal distribution. Finally, we set the penalty parameter $\mu=2$. For this test, we look for a reduction of $10^{-12}$ in the residual, allowing for a maximum of 2000 iterations. 
 
In Table \ref{table:nnls1-single}, we report number of iterations and CPU time for ADMM, AA, and aAA--FP methods using the zero vector as the initial guess.  Then, we run the solvers for 50 randomly generated initial guesses that are generated by Matlab's rand function with seed 100,000, and report the average number of iterations and the average CPU time in Table \ref{table:nnls1-ave}. Overall, the average CPU time is lower than that obtained with a single initial guess.

 \begin{table}[!ht]
 	\caption{Nonnegative least squares problem. Number of iterations and CPU time for ADMM, AA, and aAA--FP methods using the zero vector as the initial guess. The notation $\dagger$ indicates that the solver did not achieve the desired stopping criterion within 2000 iterations.}\label{table:nnls1-single}
 	\begin{scriptsize}
 		\begin{center}
 			{\begin{tabular}{|cc|cc|cc|cc|cc|}
 					\hline
 					\multicolumn{2}{|c|}{ADMM} & \multicolumn{2}{c|}{AA(1)} & \multicolumn{2}{c|}{AA(3)} & \multicolumn{2}{c|}{AA(10)} & \multicolumn{2}{c|}{AA($\infty$)} \\
 					\hline
 					$\texttt{it}$ & CPU & $\texttt{it}$ & CPU & $\texttt{it}$ & CPU & $\texttt{it}$ & CPU & $\texttt{it}$ & CPU \\
 					\cellcolor{gray} 2000$\dagger$ & 2.43 & 77 & 0.27 & 52 & 0.19 & 38 &  0.13 & \cellcolor{pink} 30 & \cellcolor{green} 0.13 \\
 					\hline
 					\multicolumn{2}{|c|}{aAA(5)[1]--FP[5]} & \multicolumn{2}{c|}{aAA(1)[1]--FP[1]} & \multicolumn{2}{c|}{aAA(3)[1]--FP[3]} & \multicolumn{2}{c|}{aAA(10)[1]--FP[10]} & \multicolumn{2}{c|}{aAA($\infty$)[1]--FP[5]} \\
 					\hline
 					$\texttt{it}$ & CPU & $\texttt{it}$ & CPU & $\texttt{it}$ & CPU & $\texttt{it}$ & CPU & $\texttt{it}$ & CPU \\
 					43&  0.18 &  85 & 0.35  & 85  & 0.31 &45   &0.19   &37   &0.15   \\
 					\hline
 					\multicolumn{2}{|c|}{aAA(5)[8]--FP[5]} & \multicolumn{2}{c|}{aAA(1)[3]--FP[1]} & \multicolumn{2}{c|}{aAA(3)[3]--FP[3]} & \multicolumn{2}{c|}{aAA(10)[10]--FP[10]} & \multicolumn{2}{c|}{aAA($\infty$)[1]--FP[10]}  \\
 					\hline
 					$\texttt{it}$ & CPU & $\texttt{it}$ & CPU & $\texttt{it}$ & CPU & $\texttt{it}$ & CPU & $\texttt{it}$ & CPU \\
 					37 & 0.14   & 85  & 0.34  & 47 & 0.18 & \cellcolor{pink} 34 & \cellcolor{green} 0.15 & 34 & 0.14 \\
 					\hline
 					\multicolumn{2}{|c|}{aAA(8)[10]--FP[3]} & \multicolumn{2}{c|}{aAA(1)[8]--FP[1]} & \multicolumn{2}{c|}{aAA(3)[8]--FP[8]} & \multicolumn{2}{c|}{aAA(10)[10]--FP[5]} & \multicolumn{2}{c|}{aAA($\infty$)[1]--FP[15]} \\
 					\hline
 					$\texttt{it}$ & CPU & $\texttt{it}$ & CPU & $\texttt{it}$ & CPU & $\texttt{it}$ & CPU & $\texttt{it}$ & CPU \\
 					36 & 0.14 & 84  & 0.32 & 46  & 0.17 & 60 & 0.21  & 84 & 0.41  \\
 					\hline
 					\multicolumn{2}{|c|}{aAA(15)[10]--FP[5]} & \multicolumn{2}{c|}{aAA(1)[10]--FP[1]} & \multicolumn{2}{c|}{aAA(3)[10]--FP[5]} & \multicolumn{2}{c|}{aAA(10)[15]--FP[10]} & \multicolumn{2}{c|}{aAA($\infty$)[2]--FP[10]} \\
 					\hline
 					$\texttt{it}$ & CPU & $\texttt{it}$ & CPU & $\texttt{it}$ & CPU & $\texttt{it}$ & CPU & $\texttt{it}$ & CPU \\
 					57  & 0.21  & 94  & 0.33  & 73  & 0.29  & 63  & 0.23  & 79  & 0.34 \\
 					\hline
 			\end{tabular}}
 		\end{center}
 	\end{scriptsize}
 \end{table}

 \begin{table}[!ht]
 	\caption{Nonnegative least squares problem. The average of number of iterations and the average CPU time for ADMM, AA, and aAA--FP methods using 50 random initial guesses generated by Matlab's rand function with seed 100,000. The notation $\dagger$ indicates that the solver did not achieve the desired stopping criterion within 2000 iterations.}\label{table:nnls1-ave}
 	\begin{scriptsize}
 		\begin{center}
 			{\begin{tabular}{|cc|cc|cc|cc|cc|}
 					\hline
 					\multicolumn{2}{|c|}{ADMM} & \multicolumn{2}{c|}{AA(1)} & \multicolumn{2}{c|}{AA(3)} & \multicolumn{2}{c|}{AA(10)} & \multicolumn{2}{c|}{AA($\infty$)} \\
 					\hline
 					$\texttt{it}$ & CPU & $\texttt{it}$ & CPU & $\texttt{it}$ & CPU & $\texttt{it}$ & CPU & $\texttt{it}$ & CPU \\
 					\cellcolor{gray} 2000$\dagger$ & 1.73 & 90 & 0.10 & 61 & 0.07 & \cellcolor{pink} 43 & \cellcolor{green} 0.06 & 60 & 1.52 \\
 					\hline
 					\multicolumn{2}{|c|}{aAA(5)[1]--FP[5]} & \multicolumn{2}{c|}{aAA(1)[1]--FP[1]} & \multicolumn{2}{c|}{aAA(3)[1]--FP[3]} & \multicolumn{2}{c|}{aAA(10)[1]--FP[10]} & \multicolumn{2}{c|}{aAA($\infty$)[1]--FP[5]} \\
 					\hline
 					$\texttt{it}$ & CPU & $\texttt{it}$ & CPU & $\texttt{it}$ & CPU & $\texttt{it}$ & CPU & $\texttt{it}$ & CPU \\
 					51 & 0.05 & 92 & 0.10 & 53 & 0.06 & 45 & 0.05 & 68 & 0.20 \\
 					\hline
 					\multicolumn{2}{|c|}{aAA(5)[8]--FP[5]} & \multicolumn{2}{c|}{aAA(1)[3]--FP[1]} & \multicolumn{2}{c|}{aAA(3)[3]--FP[3]} & \multicolumn{2}{c|}{aAA(10)[10]--FP[10]} & \multicolumn{2}{c|}{aAA($\infty$)[1]--FP[10]}  \\
 					\hline
 					$\texttt{it}$ & CPU & $\texttt{it}$ & CPU & $\texttt{it}$ & CPU & $\texttt{it}$ & CPU & $\texttt{it}$ & CPU \\
 					46 & 0.05 & 61 & 0.07 & 55 & 0.06 & \cellcolor{pink} 43 & \cellcolor{green} 0.06 & 46 & 0.06 \\
 					\hline
 					\multicolumn{2}{|c|}{aAA(8)[10]--FP[3]} & \multicolumn{2}{c|}{aAA(1)[8]--FP[1]} & \multicolumn{2}{c|}{aAA(3)[8]--FP[8]} & \multicolumn{2}{c|}{aAA(10)[10]--FP[5]} & \multicolumn{2}{c|}{aAA($\infty$)[1]--FP[15]} \\
 					\hline
 					$\texttt{it}$ & CPU & $\texttt{it}$ & CPU & $\texttt{it}$ & CPU & $\texttt{it}$ & CPU & $\texttt{it}$ & CPU \\
 					44 & 0.05 & 90 & 0.09 & 51 & 0.06 & 46 & 0.06 & 173 & 0.69 \\
 					\hline
 					\multicolumn{2}{|c|}{aAA(15)[10]--FP[5]} & \multicolumn{2}{c|}{aAA(1)[10]--FP[1]} & \multicolumn{2}{c|}{aAA(3)[10]--FP[5]} & \multicolumn{2}{c|}{aAA(10)[15]--FP[10]} & \multicolumn{2}{c|}{aAA($\infty$)[2]--FP[10]} \\
 					\hline
 					$\texttt{it}$ & CPU & $\texttt{it}$ & CPU & $\texttt{it}$ & CPU & $\texttt{it}$ & CPU & $\texttt{it}$ & CPU \\
 					43 & 0.06 & 93 & 0.10 & 52 & 0.06 & 43 & 0.06 & 47 & 0.06 \\
 					\hline
 			\end{tabular}}
 		\end{center}
 	\end{scriptsize}
 \end{table}

 In Table \ref{table:nnls1-single}, we note the beneficial effect of aAA($m$)[$s$]--FP[$t$] over the simple ADMM. First, we note that AA($m$) is more than 10 times faster than ADMM, in terms of both number of iterations and CPU time, see, for example, AA(10). Furthermore, we observe that as $m$ increases, the number of iterations of both AA and aAA--FP decreases. However, no substantial gain is obtained for very large values of $m$; compare, for instance, AA(10) with AA($\infty$). We observe that, by carefully choosing the parameters $m, s, t$, one can construct an aAA($m$)[$s$]--FP[$t$] solver that outperforms AA in terms of both number of iterations and CPU times. For example, in Table \ref{table:nnls1-ave}, aAA(3)[8]--FP[8] achieves the prescribed residual reduction in fewer iterations than AA(3), while requiring a CPU time comparable to that of AA(10). Finally, as with the previous problems, we observe that aAA($m$)[$s$]--FP[$t$] can significantly accelerate the performance of ADMM, resulting in a solver that is over 50 times faster than ADMM in iteration count and 34 times faster in CPU time; see, for example, aAA(5)[8]--FP[5] or aAA(8)[10]--FP[3] in Table \ref{table:nnls1-ave}.
 
 In Figure \ref{fig:NNLS-res-story} we plot the history of the residuals for aAA(10)[10]--FP(10), for the 50 different initial guesses. As observed, for some initial guesses the solver requires more than 50 iterations in order to reach convergence. This is not surprising, as the behavior of the classical Anderson acceleration strongly depends on the initial guess; see \cite{de2024anderson}. Nonetheless, the solver requires in average 43 iterations for reaching the prescribed reduction in the residual; see Table \ref{table:nnls1-ave}.
 
 \begin{figure}[H]
 	\centering
 	\includegraphics[width=0.6\linewidth]{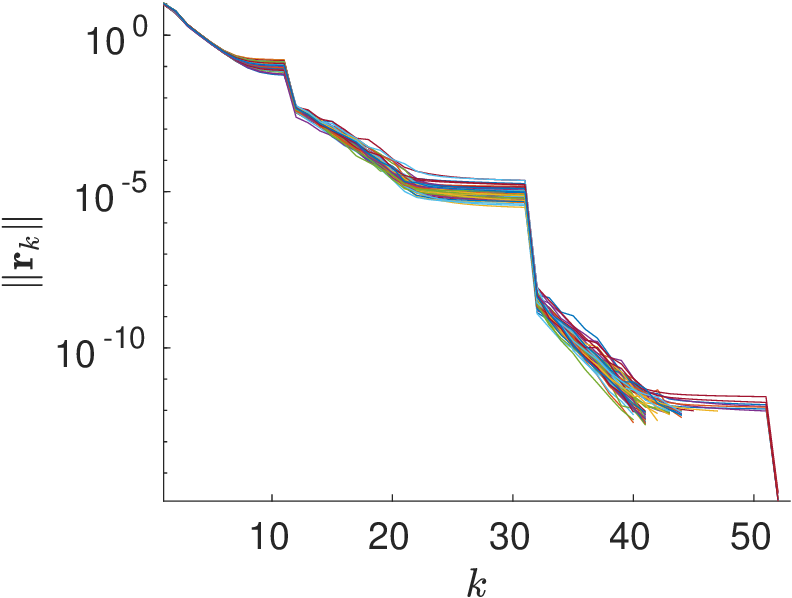}
 	\caption{Nonnegative least squares problem. History of residuals for aAA(10)[10]--FP(10) for 50 random initial guesses generated by Matlab's rand function with seed 100,000.}
 	\label{fig:NNLS-res-story}
 \end{figure}

 We would like to mention that for AA($\infty$) Matlab failed to solve the least-squares problem \eqref{eq:min-AA} for some of the instances because the matrix of the least-squares problem was found to be rank-deficient. In some cases, the first 15 columns (each column has the form $r(\mathbf{x}_k)-r(\mathbf{x}_{k-i}$)) have very small norms, on the order of $10^{-9}$ or $10^{-10}$, which results in an effective loss of rank. In the literature, many efforts have been made to address how to efficiently and robustly solve the least-squares problems involved in AA, for example \cite{lockhart2022performance,loffeld2016considerations}. We leave such implementation details for future work. In practice, we do not recommend using AA($\infty$). For aAA($m$)[$s$]--FP[$t$], although the choice of $m$ is often problem-dependent, it should remain modest in size--a few tens--to ensure efficiency and stability.

 \subsection{Regularized logistic regression}
 In the final numerical test, we examine the regularized logistic regression problem. This is defined as the minimization of the following objective function:
 \begin{displaymath}
 	\min_{\mathbf{x}} ~ h(\mathbf{x}) := \dfrac{1}{n_1} \sum_{i=1}^{n_1} \log(1 + \exp(-y_i \mathbf{x}^\top \mathbf{c}_i)) + \dfrac{\beta}{2} \| \mathbf{x} \|^2_2,
 \end{displaymath}
 and 
 \begin{displaymath}
 	\mathbf{C} = \left[
 	\begin{array}{c}
 		\mathbf{c}_1^T \\
 		\vdots \\
 		\mathbf{c}_{n_1}^T
 	\end{array}    
 	\right] \in \mathbb{R}^{n_1 \times n_2}
 \end{displaymath}
 is the matrix containing the data samples, with $y_i$ the corresponding label for $i=1,\ldots, n_1$, and $\beta>0$ is a regularization parameter. For this example, we follow the work in \cite{feng2024convergence} and apply gradient descent (GD) to minimize the objective function. Within this framework, starting from an approximation $\mathbf{x}_k$, we take a step in the direction of the negative gradient of $h$, scaled by a constant. Specifically, we consider the following recurrence:
 \begin{displaymath}
 	\mathbf{x}_{k+1} = \mathbf{x}_k - \eta \nabla h(\mathbf{x}_k),
 \end{displaymath}
 where $\eta$ is the step length taken. We test how aAA($m$)[$s$]--FP[$t$] can accelerate the convergence of GD for the (scaled) dataset \texttt{covtype} of the LIBSVM library \cite{LIBSVM} available at \url{http://www.csie.ntu.edu.tw/~cjlin/libsvm}; for this test set, we have $n_1=581,012$ and $n_2= 54$. We would like to mention that a study of how AA($m$) and aAA($m$)[1]--FP[$m$] (referred to as AAP($m$) in \cite{feng2024convergence}) accelerate GD for this problem was performed in \cite{feng2024convergence}. The authors in \cite{feng2024convergence} observed that the alternating Anderson--Picard presents a staircase pattern, being able to significantly reduce the error at each periodic AA; they observed that AAP(5) and AAP(7) converge to a tolerance of $10^{-10}$ in about 80 Picard iterations for the problem considered here, where they did not count the AA step in each period. Thus, AAP(5) and AAP(7) take around 96 and 91 iterations, respectively. We run GD, AA, and aAA--FP for 1000 iterations, seeking a reduction of $10^{-12}$ in the residual. As in \cite{feng2024convergence}, we set $\beta=10^{-2}$ and $\eta=1$. In Table \ref{table:logistic}, we report the number of iterations and the CPU time required to reach the prescribed tolerance. For this test, we employ the zero vector as the initial guess.

 \begin{table}[!ht]
 	\caption{Regularized logistic regression. Number of iterations and CPU time for GD, AA, and aAA--FP methods, when employing the zero vector as the initial guess. The notation $\dagger$ indicates that the solver did not achieve the desired stopping criterion within 1000 iterations.}\label{table:logistic}
 	\begin{scriptsize}
 		\begin{center}
 			{\begin{tabular}{|cc|cc|cc|cc|cc|}
 					\hline
 					\multicolumn{2}{|c|}{GD} & \multicolumn{2}{c|}{AA(1)} & \multicolumn{2}{c|}{AA(3)} & \multicolumn{2}{c|}{AA(10)} & \multicolumn{2}{c|}{AA($\infty$)} \\
 					\hline
 					$\texttt{it}$ & CPU & $\texttt{it}$ & CPU & $\texttt{it}$ & CPU & $\texttt{it}$ & CPU & $\texttt{it}$ & CPU \\
 					\cellcolor{gray} 1000$\dagger$ & 1122 & \cellcolor{gray} 1000$\dagger$ & 1129 & 110 & 124 & 75 & 84.9 & \cellcolor{gray} 1000$\dagger$ & 1138 \\
 					\hline
 					\multicolumn{2}{|c|}{AA(7)} & \multicolumn{2}{c|}{AA(5)} & \multicolumn{2}{c|}{aAA(3)[1]--FP[3]} & \multicolumn{2}{c|}{aAA(10)[1]--FP[5]} & \multicolumn{2}{c|}{aAA(7)[3]--FP[3]} \\
 					\hline
 					$\texttt{it}$ & CPU & $\texttt{it}$ & CPU & $\texttt{it}$ & CPU & $\texttt{it}$ & CPU & $\texttt{it}$ & CPU \\
 					64 & 72.3 & 85&  93.4 & 41 & 45.7 & 43 & 46.9 & 41 & 46.7 \\
 					\hline
 					\multicolumn{2}{|c|}{aAA(7)[5]--FP[3]} & \multicolumn{2}{c|}{aAA(6)[3]--FP[3]} & \multicolumn{2}{c|}{aAA(3)[1]--FP[5]} & \multicolumn{2}{c|}{aAA(10)[3]--FP[5]} & \multicolumn{2}{c|}{aAA(5)[3]--FP[3]}  \\
 					\hline
 					$\texttt{it}$ & CPU & $\texttt{it}$ & CPU & $\texttt{it}$ & CPU & $\texttt{it}$ & CPU & $\texttt{it}$ & CPU \\
 					53 & 60.6 & 47 & 54.3 & 55 & 62.9 & 47 & 53.3 & 54 & 60.6 \\
 					\hline
 					\multicolumn{2}{|c|}{aAA(7)[10]--FP[3]} & \multicolumn{2}{c|}{aAA(6)[10]--FP[3]} & \multicolumn{2}{c|}{aAA(3)[3]--FP[3]} & \multicolumn{2}{c|}{aAA(10)[5]--FP[5]} & \multicolumn{2}{c|}{aAA(5)[5]--FP[3]} \\
 					\hline
 					$\texttt{it}$ & CPU & $\texttt{it}$ & CPU & $\texttt{it}$ & CPU & $\texttt{it}$ & CPU & $\texttt{it}$ & CPU \\
 					57 & 62.5 & 57 & 64.2 & 59 & 64.5 & 57 & 65.1 & 53 & 58.7 \\
 					\hline
 					\multicolumn{2}{|c|}{aAA(5)[1]--FP[2]} & \multicolumn{2}{c|}{aAA(5)[5]--FP[5]} & \multicolumn{2}{c|}{aAA(3)[6]--FP[3]} & \multicolumn{2}{c|}{aAA(5)[5]--FP[10]} & \multicolumn{2}{c|}{aAA(5)[10]--FP[3]} \\
 					\hline
 					$\texttt{it}$ & CPU & $\texttt{it}$ & CPU & $\texttt{it}$ & CPU & $\texttt{it}$ & CPU & $\texttt{it}$ & CPU \\
 					\cellcolor{pink} 28 & \cellcolor{green} 30.6 & 37 & 40.6 & 45 & 51.9 & 57 & 62.1 & 57 & 62.6 \\
 					\hline
 			\end{tabular}}
 		\end{center}
 	\end{scriptsize}
 \end{table}

 First, from Table \ref{table:logistic} we observe that as $m$ increases, the number of iterations of AA($m$) decreases, see the first row. We note, however, that in this case, finite windowed Anderson outperforms AA($\infty$), with the latter failing to converge within 1000 iterations. Further, we observe a clear benefit in applying AA to GD; in fact, the solver is over ten times faster in terms of both number of iterations and CPU time. Second, we note that aAA--FP results in a solver that is able to outperform AA($m$). For example, aAA(3)[6]--FP[3] is about two times faster than AA(3) in terms of number of iterations and CPU time. Finally, we would like to mention that by carefully choosing the parameters, $m, s, t$, one is able to obtain a solver that is nearly 50 times faster than GD alone and about 3 times faster than AA($m$), compare, for example, aAA(5)[1]--FP[2] with GD and AA(5). We conclude that the aAA--FP method is preferable.
 
 Regarding the choice of parameters, from Table \ref{table:logistic} we observe that $m \geq 2t$ usually results in a more efficient solver than AA alone for the problem considered here, a feature already observed in \cite{banerjee2016periodic} for aAA($m$)[$1$]--FP[$t$]. For example, compare aAA(7)[5]--FP[3] with AA(7).
 
 \subsection{Study the parameters choice}
 We study now the choice of the parameters, $m,s$ and $t$ in aAA($m$)[$s$]--FP[$t$]. We consider the total variation problem described in Section \ref{sec:TV}, and run aAA($m$)[$s$]--FP[$t$], employing zero as the initial guess. We report the number of iterations and the CPU times in second for three different cases: 
 	\begin{enumerate}
 		\item[(1.)] set $m=s$ and let $m, t \in \{1, \ldots, 20 \}$; 
 		\item[(2.)] set $m=t$ and let $m, s \in \{1, \ldots, 20 \}$;
 		\item[(3.)] set $s=t$ and let $m, t \in \{1, \ldots, 20 \}$.
 	\end{enumerate}
 	
 	In Figures \ref{fig:TV-contour1}, \ref{fig:TV-contour2}, and \ref{fig:TV-contour3} we report the contour plots of the number of iterations and the CPU times. In \cite{banerjee2016periodic}, it was found that taking $m\geq 2t$ generally improves performance of aAA($m$)[1]--FP[$t$] for the problems considered. Accordingly, we have also included the line $m=2t$ in the contour plots.  First, from Figure \ref{fig:TV-contour1} we observed that in both terms of total number of aAA($m$)[$m$]-FP[$t$] iterations and CPU time it is more beneficial increasing the number of Anderson iterations $s$ ($m=s$) than increasing the number of fixed point iterations $t$. Second, Figure \ref{fig:TV-contour2} shows that increasing the number of Anderson iterations ($s$) results in a lower number of aAA($m$)[$s$]-FP[$m$] iterations and computational time than increasing the number of fixed point iterations ($t=m$). Third, as observed in Figure \ref{fig:TV-contour3} one can obtain a reduction in the number of aAA($m$)[$t$]-FP[$t$] iterations and in the computational time by letting the window size $m$ increase.  We remark the best choice of the parameters are inside the dark blue area. Figures \ref{fig:TV-contour1} and \ref{fig:TV-contour3} seem suggest that using $m\geq 2t$ gives a better performance. 
 	
 	The idea behind aAA($m$)[$s$]--FP[$t$] is to apply 
 	$t$ fixed‑point iterations between every $s$ AA($m$) updates in order to reduce the computational cost of the least‑squares problems. However, if  $t$ is too large, the method behaves more like a standard fixed‑point iteration and the convergence may slow down. Thus, there is a trade‑off between iteration count and CPU time. In general,  we recommend that $t$ not be chosen significantly larger than the parameters $m$ and $s$, since these have a more substantial impact on performance.
 
 \begin{figure}[H]
 	\centering
 	\includegraphics[width=0.46\linewidth]{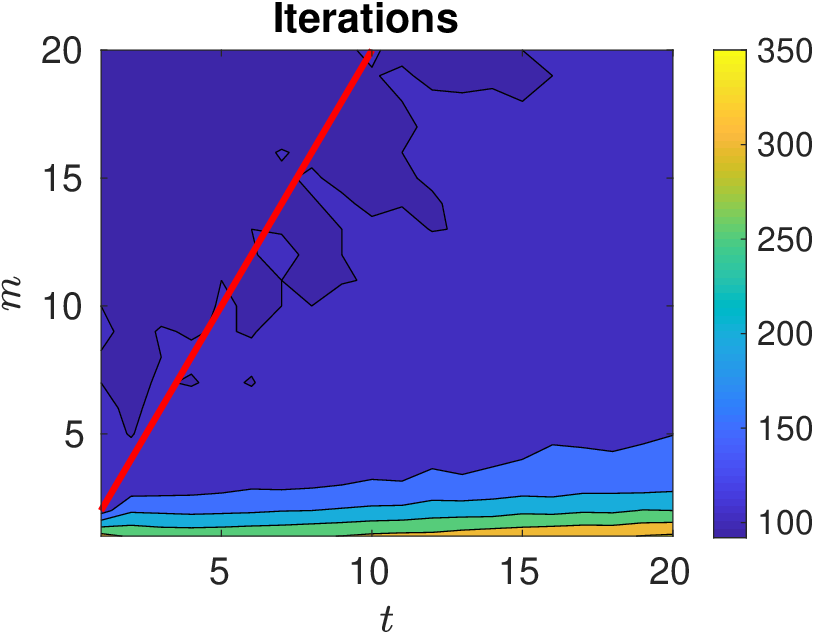}
 	\includegraphics[width=0.46\linewidth]{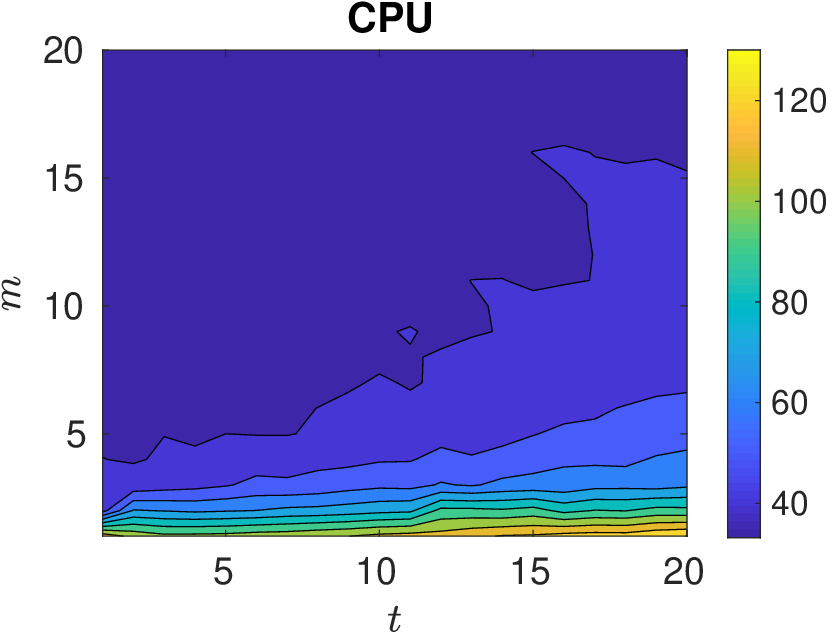}
 	\caption{Total variation. Contour plot of the number of iterations (left) and of the CPU times (right), when applying aAA($m$)[$m$]-FP[$t$], for $m,t \in \{1, \ldots, 20 \}$. The red line corresponds to $m=2t$.}
 	\label{fig:TV-contour1}
 \end{figure}
 
 \begin{figure}[H]
 	\centering
 	\includegraphics[width=0.46\linewidth]{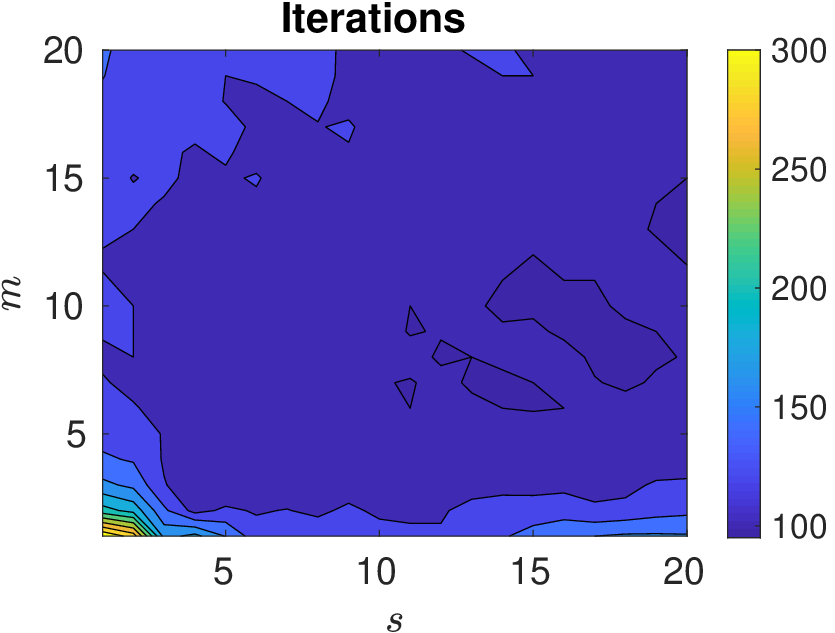}
 	\includegraphics[width=0.46\linewidth]{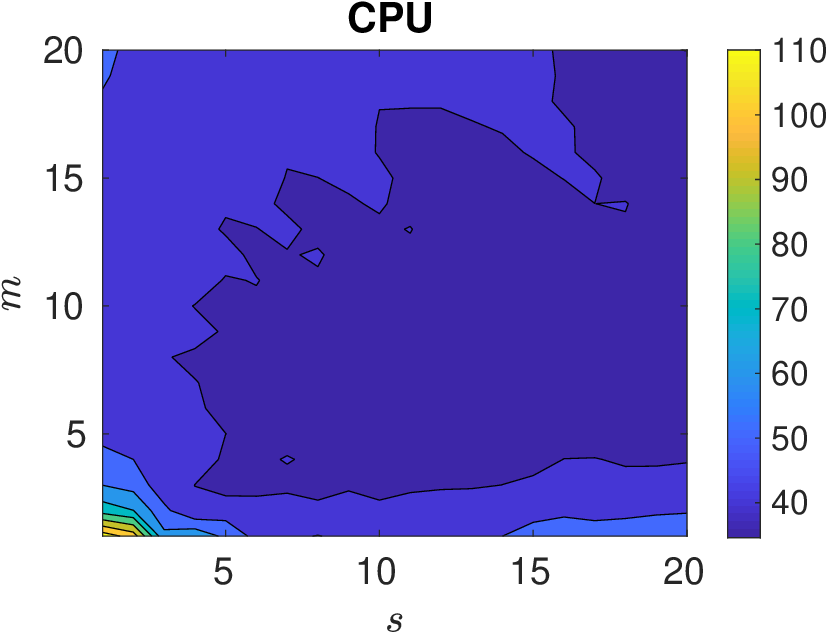}
 	\caption{Total variation. Contour plot of the number of iterations (left) and of the CPU times (right), when applying aAA($m$)[$s$]-FP[$m$], for $m,s \in \{1, \ldots, 20 \}$. }
 	\label{fig:TV-contour2}
 \end{figure}
 
 \begin{figure}[H]
 	\centering
 	\includegraphics[width=0.46\linewidth]{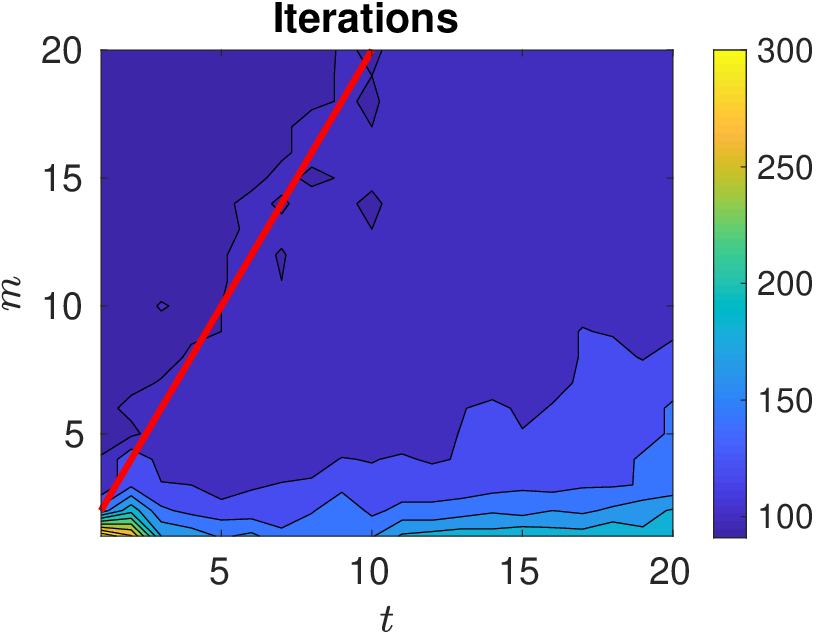}
 	\includegraphics[width=0.46\linewidth]{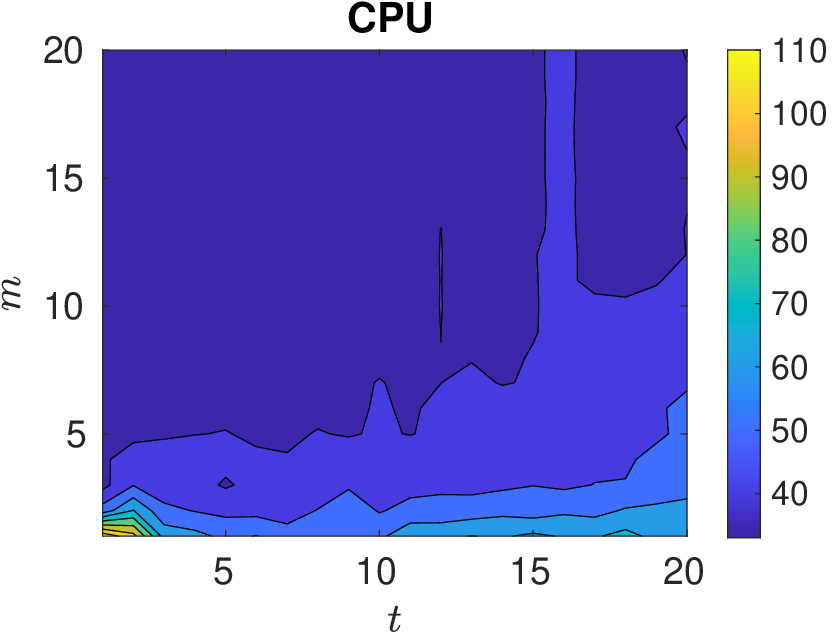}
 	\caption{Total variation. Contour plot of the number of iterations (left) and of the CPU times (right), when applying aAA($m$)[$t$]-FP[$t$], for $m,t \in \{1, \ldots, 20 \}$.  The red line corresponds to $m=2t$.}
 	\label{fig:TV-contour3}
 \end{figure}

 \section{Conclusion}\label{sec:con}
 
 In this work, we proposed a generalization of the alternating Anderson acceleration iteration, aAA($m$)[$s$]--FP[$t$]. The proposed method consists of a periodic pattern of $t$ fixed point iterations followed by $s$ iterations of AA($m$).   We provided a theoretical analysis of the convergence of the method, when applied for accelerating a linear fixed-point iteration. We established a connection between aAA($\infty$)[1]--FP[$t$] and GMRES in the linear case. Further, we gave a sufficient condition for the convergence of aAA--FP, when the fixed point iteration matrix is diagonalizable and noncontractive. Numerical results validated our theoretical findings, showing a periodic equivalence between GMRES and aAA($\infty$)[1]--FP[$t$]. In addition, we applied the proposed aAA--FP method to accelerate Jacobi iteration, Gauss--Seidel iteration, Picard iteration, gradient descent, and the alternating direction method of multipliers for the solutions of problems arising in fluid dynamics and nonlinear optimization. The results demonstrated that aAA--FP can drastically accelerate the fixed-point iterations and achieve competitive performance with AA($\infty$), in terms of both number of iterations and CPU times, provided that $m$, $s$, and $t$ are appropriately selected.
 
 From the numerical tests, we observed that the least squares problem to be solved within AA may be rank-deficient for large window size $m$.  Future work will focus on the development of efficient solvers for the least-squares problem that arises at each Anderson iteration. In addition, we mention that the code we employ in this work is not optimized. For this reason, future work will be devoted to devising Python and C++ implementations of the proposed strategy. Finally, we plan to explore the application of aAA--FP to more complex, real-world problems.

 \section*{Acknowledgements}
 S.L. is a member of Gruppo Nazionale di Calcolo Scientifico (GNCS) of the Istituto Nazionale di Alta Matematica (INdAM).

\bibliographystyle{plain}
\bibliography{aAAFPbib}

\end{document}